\numberwithin{equation}{section}
\DeclareMathOperator{\reel}{Re}
\DeclareMathOperator{\spectre}{Sp}
\let\oldmarginpar\marginpar
\renewcommand\marginpar[1]{\oldmarginpar{\color{red}\raggedleft\tiny #1}}
\title{First-order expansion for the Dirichlet eigenvalues of an elliptic system with oscillating coefficients}
\author{Christophe Prange$^*$}
\thanks{$^*$ Institut Math\'ematique de Jussieu, $175$ rue du Chevaleret, $75013$ Paris, France\\
\emph{E-mail address:} \texttt{prange@math.jussieu.fr}}
\begin{document}

\selectlanguage{english}

\newtheorem{theo}{Theorem}[section]
\newtheorem{prop}[theo]{Proposition}
\newtheorem{lem}[theo]{Lemma}
\newtheorem{cor}[theo]{Corollary}
\newtheorem*{theo*}{Theorem}

\theoremstyle{definition}
\newtheorem{defi}[theo]{Definition}

\theoremstyle{remark}
\newtheorem{rem}[theo]{Remark}

\title{First-order expansion for the Dirichlet eigenvalues of an elliptic system with oscillating coefficients}

\begin{abstract}
This paper is concerned with the homogenization of the Dirichlet eigenvalue problem, posed in a bounded domain $\Omega\subset\mathbb R^2$, for a vectorial elliptic operator $-\nabla\cdot A^\varepsilon(\cdot)\nabla$ with $\varepsilon$-periodic coefficients. We analyse the asymptotics of the eigenvalues $\lambda^{\varepsilon,k}$ when $\varepsilon\rightarrow 0$, the mode $k$ being fixed. A first-order asymptotic expansion is proven for $\lambda^{\varepsilon,k}$ in the case when $\Omega$ is either a smooth uniformly convex domain, or a convex polygonal domain with sides of slopes satisfying a small divisors assumption. Our results extend those of Moskow and Vogelius in \cite{moscovog} restricted to scalar operators and convex polygonal domains with sides of rational slopes. We take advantage of the recent progress due to G\'erard-Varet and Masmoudi \cite{dgvnm,dgvnm2} in the homogenization of boundary layer type systems.
\end{abstract}

\maketitle




\selectlanguage{english}

\pagestyle{plain}

\section{Introduction}
This paper is devoted to the homogenization of the Dirichlet eigenvalue problem
\begin{equation}\label{syseig}
\left\{
\begin{array}{rll}
-\nabla \cdot A\bigl(\frac{x}{\varepsilon}\bigr)\nabla v^\varepsilon=&\lambda^\varepsilon v^\varepsilon,& x\in \Omega\\
v^\varepsilon=&0,& x\in \partial \Omega
\end{array}
\right. 
\end{equation}
posed in a planar domain $\Omega$ with periodic microstructure.
Some reasons for the study of the asymptotical behaviour of the eigenvalues when the period $\varepsilon\rightarrow 0$  are expounded in \cite{santovog}. Among physical motivations is the analysis of low frequency vibrations in periodic composite media. Significant progress in the direction of a better understanding of the asymptotics of $\lambda^\varepsilon$ when $\varepsilon\rightarrow 0$ has been achieved first by Santosa and Vogelius in \cite{santovog} then by Moskow and Vogelius in \cite{moscovog} under weaker assumptions. Our work extends the results of \cite{moscovog} to the case of elliptic systems and more general domains $\Omega$. Moreover, error estimates have been improved.

Before entering into more details, let us state our mathematical framework. Let $N\in\mathbb N$, $N\geq 1$. Throughout this paper, $\Omega$ stands for a bounded open subset of $\mathbb R^2$, $v^\varepsilon=v^\varepsilon(x)\in\mathbb R^N$ and $A=A^{\alpha\beta}(y)\in M_N(\mathbb R)$ is a family of periodic functions of $y\in\mathbb T^2$ indexed by $1\leq\alpha,\beta\leq 2$. Therefore, taking advantage of Einstein's convention for summation:
\begin{equation*}
\biggl(\nabla\cdot A\Bigl(\frac{x}{\varepsilon}\Bigr)\nabla v^\varepsilon\biggl)_i=\partial_{x_\alpha}\biggl(A^{\alpha\beta}_{ij}\Bigl(\frac{x}{\varepsilon}\Bigr)\partial_{x_\beta}v^\varepsilon_j\biggr).
\end{equation*}
All along these lines, $C>0$ denotes an arbitrary constant independant of $\varepsilon$. The main assumptions on $A$ are:
\begin{description}
\item[(A1) ellipticity] there exists $\lambda>0$ such that for all $\xi=\left(\xi^1,\xi^2\right)\in\mathbb R^N\times\mathbb R^N$, for all $y\in\mathbb R^2$,
\begin{equation*}
\lambda\xi^\alpha\cdot\xi^\alpha\leq A^{\alpha\beta}(y)\xi^\alpha\cdot\xi^\beta\leq \lambda^{-1}\xi^\alpha\cdot\xi^\alpha;
\end{equation*}
\item[(A2) periodicity] for all $y\in\mathbb R^2$, for all $h\in \mathbb Z^2$,
\begin{equation*}
A(y+h)=A(y);
\end{equation*}
\item[(A3) regularity] $A$ is supposed to belong to $C^\infty(\overline{\Omega})$;
\item[(A4) symmetry] for all $1\leq\alpha,\beta\leq 2$, for all $1\leq i,j\leq N$, $A^{\alpha\beta}_{ij}=A^{\beta\alpha}_{ji}$.
\end{description}
Unless otherwise specified, we always assume {\bf (A1)}, \dots, {\bf (A4)}.
In a very classical fashion, boundedness of $\Omega$ and ellipticity of $A$ imply, through Poincar\'e inequality and Lax Milgram lemma, that the linear mapping
\begin{equation*}
T^\varepsilon:\quad f\in L^2(\Omega)\longmapsto u^\varepsilon\in H^1_0(\Omega),
\end{equation*}
where $u^\varepsilon$ is the unique weak solution of \eqref{syseig} with r.h.s. equal to $f$, is well defined, continuous and injective. If one composes $T^\varepsilon$ with the compact injection of $H^1_0(\Omega)$ in $L^2(\Omega)$, one gets a compact operator, again denoted by $T^\varepsilon$, from $L^2(\Omega)$ in itself. Assumption {\bf (A4)} tells that $T^\varepsilon$ is self-adjoint. 

From the previous considerations, we know that our eigenvalue problem \eqref{syseig} is well posed. There exists a sequence of eigenvalues $0<\lambda^{\varepsilon,0}\leq \lambda^{\varepsilon,1}\leq \ldots\lambda^{\varepsilon,k}\xrightarrow{k\rightarrow\infty}\infty$ 
and a hilbertian basis $(v^{\varepsilon,k})$ of $L^2(\Omega)$ of corresponding eigenvectors. To tackle the issue of the asymptotical behaviour of $\left(\lambda^\varepsilon,v^\varepsilon\right)$ we deeply use the periodic structure of the problem at microscale contained in {\bf (A2)}. We expand, at least formally, $v^\varepsilon$ and $\lambda^\varepsilon$ in powers of $\varepsilon$
\begin{align}
v^\varepsilon(x)&\approx v^0\Bigl(x,\frac{x}{\varepsilon}\Bigr)+\varepsilon v^1\Bigl(x,\frac{x}{\varepsilon}\Bigr)+\varepsilon^2v^2\Bigl(x,\frac{x}{\varepsilon}\Bigr)+\ldots\label{dvptveps}\\
\lambda^\varepsilon&\approx\lambda^0+\varepsilon\lambda^1+\varepsilon^2\lambda^2+\ldots\label{dvptleps}
\end{align}
where for all $i\in\mathbb N$, $v^i=v^i(x,y)$ is periodic in the $y\in\mathbb T^2$ variable. Plugging \eqref{dvptveps} and \eqref{dvptleps} in \eqref{syseig} and identifying the powers of $\varepsilon$ yields that $v^0$ does not depend on $y$ and that $(\lambda^0,v^0)$ solves the homogenized eigenvalue problem
\begin{equation}\label{syseighom}
\left\{
\begin{array}{rll}
-\nabla \cdot A^0\nabla v^0=&\lambda^0v^0,& x\in \Omega\\
v^0=&0,& x\in \partial \Omega
\end{array}
\right. .
\end{equation}
As usual, the constant homogenized tensor $A^0=A^{0,\alpha\beta}\in M_N(\mathbb R)$ in \eqref{syseighom} is given by
\begin{equation*}
A^{0,\alpha\beta}:=\int_{\mathbb T^2}A^{\alpha\beta}(y)dy+\int_{\mathbb T^2}A^{\alpha\gamma}(y)\partial_{y_\gamma}\chi^\beta(y)dy,
\end{equation*}
where the family $\chi=\chi^\gamma(y)\in M_N(\mathbb R)$, $y\in\mathbb T^2$, solves the cell problem
\begin{equation}\label{eqdefchi}
-\nabla_y \cdot A(y)\nabla_y \chi^\gamma=\partial_{y_\alpha}A^{\alpha\gamma},\ y\in \mathbb T^2\qquad \mbox{and}\qquad \int_{\mathbb T^2}\chi^\gamma(y)dy=0.
\end{equation}
Note that $A^0$ fulfils assumptions {\bf (A1)} and {\bf (A4)}, so there exists a sequence of eigenvalues $0<\lambda^{0,0}\leq \lambda^{0,1}\leq \ldots\lambda^{0,k}\xrightarrow{k\rightarrow\infty}\infty$ and a hilbertian basis $(v^{0,k})$ of $L^2(\Omega)$ of corresponding eigenvectors. Let $T^0$ denote the operator similar to $T^\varepsilon$ with $A^0$ in place of $A^\varepsilon:=A\left(\frac{\cdot}{\varepsilon}\right)$.

\subsection{What is at stake?}
Let us now focus on the convergence properties of the eigenvalues $\lambda^{\varepsilon,k}$ when $\varepsilon\rightarrow\infty$ and the mode $k$ is fixed. The first thing we know from \cite{allconca}, among other papers, is that for all $k\in\mathbb N$,
\begin{equation}\label{ineqvpnormeT}
\left|
\frac{1}{\lambda^{\varepsilon,k}}-\frac{1}{\lambda^{0,k}}
\right|\leq
\begin{Vmatrix}
T^\varepsilon-T^0
\end{Vmatrix}_{\mathcal L (L^2(\Omega))}
\end{equation}
and that
\begin{equation*}
T^\varepsilon\xrightarrow{\varepsilon\rightarrow 0}T^0
\end{equation*}
in $\mathcal L\bigl(L^2(\Omega)\bigr)$ norm. Therefore, for all $k\in\mathbb N$,
\begin{equation*}
\lambda^{\varepsilon,k}\xrightarrow{\varepsilon\rightarrow 0}\lambda^{0,k}
\end{equation*}
no matter wether $\lambda^{0,k}$ is simple or not. When $N=1$, i.e. the system \eqref{syseig} is a scalar equation, on condition that one has enough regularity on $v^{0,k}$ ($v^{0,k}\in H^2(\Omega)$ is sufficient) so that an estimate like
\begin{equation}\label{estuepsu0epsL2}
\begin{Vmatrix}
v^{\varepsilon,k}(x)-v^{0,k}(x)
\end{Vmatrix}_{L^2(\Omega)}\leq C\varepsilon
\begin{Vmatrix}
v^{0,k}
\end{Vmatrix}_{H^{2}(\Omega)}
\end{equation}
holds, one has in addition the error estimate
\begin{equation}\label{estvpOeps}
\begin{vmatrix}
\lambda^{\varepsilon,k}-\lambda^{0,k}
\end{vmatrix}\leq
C_k\varepsilon.
\end{equation}

Estimate \eqref{estvpOeps} is the starting point of the work of Moskow and Vogelius. Indeed, it leads to natural questions, such that:
\begin{enumerate}
\item What are the limit points of 
\begin{equation}\label{limpointsquot}
\frac{\lambda^{\varepsilon,k}-\lambda^{0,k}}{\varepsilon}
\mbox{?}
\end{equation}
\item Is there possibly one unique limit point?
\item What is the next term in the asymptotic expansion?
\end{enumerate}

When it does not lead to any confusion, 
we shall now omit the exponent $k$:
\begin{align*}
&\lambda^\varepsilon:=\lambda^{\varepsilon,k}\quad (\mbox{resp. } \lambda^0:=\lambda^{0,k})\\
&v^\varepsilon:=v^{\varepsilon,k}\quad (\mbox{resp. } v^0:=v^{0,k}). 
\end{align*}

In \cite{moscovog}, Moskow and Vogelius get an asymptotic formula for the eigenvalue $\lambda^{\varepsilon}$ of the scalar equation ($N=1$), valid up to the order $1$ in $\varepsilon$, provided that $v^{0}\in H^2(\Omega)$, which is actually true for sufficiently smooth domains $\Omega$ (convex or $C^2$ domains). They fully describe the first-order corrections, i.e. the limit points of \eqref{limpointsquot}, in the case when $\Omega$ is a convex polygonal domain with sides of rational slopes. In this case there is a continuum of accumulation points for \eqref{limpointsquot}.
\begin{theo}[Moskow and Vogelius in \cite{moscovog}]\label{theomoscovog}
Assume that $\Omega$ is a convex polygonal domain with sides of rational slopes and that $N=1$. Assume furthermore that $\lambda^0$ is a simple eigenvalue.\\
Then, there exists $\vartheta^*_{bl}\in L^2(\Omega)$ solving an explicit homogenized elliptic boundary value problem (see \eqref{sysoscbordhom}), and a sequence $(\varepsilon_n)$ tending to $0$ such that
\begin{equation}\label{firstorderMV}
\lambda^{\varepsilon_n}=\lambda^0+\varepsilon_n\lambda^0\int_{\Omega}\vartheta^*_{bl}(x)v^0(x)dx+o(\varepsilon_n).
\end{equation}
\end{theo}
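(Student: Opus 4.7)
The plan is to build a good $H^1_0(\Omega)$ approximation of $v^\varepsilon$ through a two-scale expansion corrected by a boundary layer, and then to compare eigenvalues via a Rayleigh-type identity. First, I would introduce the approximation
\[
w^\varepsilon(x) = v^0(x) + \varepsilon\, \chi^\gamma(x/\varepsilon)\, \partial_{x_\gamma} v^0(x) - \varepsilon\, \vartheta^\varepsilon_{bl}(x),
\]
where $\vartheta^\varepsilon_{bl}$ is the boundary-layer function solving $-\nabla\cdot A^\varepsilon\nabla \vartheta^\varepsilon_{bl}=0$ in $\Omega$ with Dirichlet trace $\vartheta^\varepsilon_{bl} = \chi^\gamma(x/\varepsilon)\partial_{x_\gamma}v^0$ on $\partial\Omega$, so that $w^\varepsilon\in H^1_0(\Omega)$. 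Since $\Omega$ is a convex polygon, $v^0 \in H^2(\Omega)$, which is enough to justify the standard energy estimate $\|v^\varepsilon - w^\varepsilon\|_{H^1_0(\Omega)} \le C\varepsilon$ by Aubin--Nitsche / duality techniques.

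Second, using the symmetry of $A^0$ and pairing \eqref{syseig} against $v^0$ and \eqref{syseighom} against $v^\varepsilon$, one gets the identity
\[
(\lambda^\varepsilon - \lambda^0) \int_\Omega v^\varepsilon \cdot v^0 \,dx = \int_\Omega (A^\varepsilon - A^0)\, \nabla v^\varepsilon \cdot \nabla v^0 \,dx.
\]
Substituting the two-scale ansatz for $\nabla v^\varepsilon$ and invoking the flux corrector of the cell problem \eqref{eqdefchi} (which allows one to write $A(I + \nabla_y\chi) - A^0$ as the $y$-divergence of a skew-symmetric matrix), a single integration by parts cancels the would-be $O(1)$ interior contribution. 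What remains is localised near $\partial\Omega$ and comes from the boundary layer; after a further integration by parts using the equation satisfied by $v^0$, this reduces to
\[
\lambda^\varepsilon - \lambda^0 = \varepsilon\, \lambda^0 \int_\Omega \vartheta^\varepsilon_{bl}(x)\, v^0(x)\,dx + o(\varepsilon).
\]

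The main obstacle is the asymptotic analysis of the boundary-layer corrector $\vartheta^\varepsilon_{bl}$. The Dirichlet data $\chi^\gamma(x/\varepsilon)\partial_{x_\gamma}v^0(x)$ oscillates rapidly along each edge; because every edge of $\Omega$ has a rational slope, the restriction of $\chi(\cdot/\varepsilon)$ to that edge is periodic in the tangential variable, with a period depending on the denominators of the slope. Following Tartar's approach, one then studies on each edge a half-plane boundary-layer problem whose solution decomposes as a piece decaying exponentially into the interior plus a constant tangential average. Collecting these edge-wise constants provides a (generally discontinuous) effective Dirichlet datum, and $\vartheta^*_{bl}$ is defined as the solution of the homogenized boundary value problem \eqref{sysoscbordhom} associated with this datum. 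A compactness argument together with two-scale convergence then yields a subsequence $\varepsilon_n \to 0$ along which $\vartheta^{\varepsilon_n}_{bl} \rightharpoonup \vartheta^*_{bl}$ in $L^2(\Omega)$, and plugging this convergence into the identity above produces \eqref{firstorderMV}.

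The reason one extracts only a subsequence is a resonance between $\varepsilon$ and the finite edge lengths: the effective tangential averages depend on the fractional offset of the rescaled periodicity cell relative to the endpoints of each edge, and this offset has no limit in general as $\varepsilon\to 0$; different subsequences select different accumulation points, which is precisely why \eqref{limpointsquot} has a whole continuum of limit points. The rational-slope hypothesis is the crucial ingredient that renders the boundary data genuinely periodic along each edge, avoiding the truly quasi-periodic boundary layers that are handled by the G\'erard-Varet--Masmoudi machinery invoked later in the paper.
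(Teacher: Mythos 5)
Your third step (the analysis of $\vartheta^\varepsilon_{bl}$ under the rational-slope hypothesis, the half-plane problems with exponentially decaying profiles plus constant tails, and the explanation of why only a subsequence converges) is a faithful account of what is done in section \ref{convexpol} under assumption {\bf (RAT)}. The problem lies in the second step, where your error accounting does not close.

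In the identity $(\lambda^\varepsilon-\lambda^0)\int_\Omega v^\varepsilon v^0\,dx=\int_\Omega (A^\varepsilon-A^0)\nabla v^\varepsilon\cdot\nabla v^0\,dx$, the quantity you want to extract, $\varepsilon\lambda^0\int_\Omega\vartheta^\varepsilon_{bl}v^0$, is itself $O(\varepsilon)$. But when you ``substitute the two-scale ansatz for $\nabla v^\varepsilon$'', the substitution error is $\nabla(v^\varepsilon-w^\varepsilon)$, and the best estimate available — your own $\|v^\varepsilon-w^\varepsilon\|_{H^1_0}\leq C\varepsilon$, i.e.\ the analogue of \eqref{enestH1ineq} — gives, via Cauchy--Schwarz, a contribution $\bigl|\int(A^\varepsilon-A^0)\nabla(v^\varepsilon-w^\varepsilon)\cdot\nabla v^0\bigr|=O(\varepsilon)$: the same order as the answer, not $o(\varepsilon)$. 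The same difficulty recurs in the boundary terms your two integrations by parts generate: writing $A(y)(I+\nabla_y\chi)-A^0=\varepsilon\nabla_x^\perp[\Psi(x/\varepsilon)]$ and integrating by parts produces $\varepsilon\int_{\partial\Omega}\Psi(x/\varepsilon)(\cdots)$, and integrating $\int A^0\nabla v^0\cdot\nabla\vartheta^\varepsilon_{bl}$ by parts produces $\varepsilon\int_{\partial\Omega}(A^0\nabla v^0\cdot n)\,\chi^\gamma(x/\varepsilon)\partial_\gamma v^0$; on edges of rational slope these integrands are periodic with nonzero mean in general, so these are genuine $O(\varepsilon)$ contributions that you neither identify nor show to cancel. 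As written, the conclusion ``this reduces to $\varepsilon\lambda^0\int\vartheta^\varepsilon_{bl}v^0+o(\varepsilon)$'' does not follow. There is also a circularity in the first step: the energy estimate for the eigenfunction error requires a rate for $\|v^\varepsilon-v^0\|_{L^2}$ and for $|\lambda^\varepsilon-\lambda^0|$ before it can be closed (this is where \eqref{estuepsu0epsL2} and the Avellaneda--Lin bound of theorem \ref{theoavlinscal} enter), which you do not address.

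The paper (following Moskow--Vogelius) avoids all of this by never manipulating the eigenfunctions directly: Osborn's estimate \eqref{estosborn} reduces the eigenvalue increment to $\langle(T^\varepsilon-T^0)v^0,v^0\rangle$ up to a quadratic error $O(\|(T^\varepsilon-T^0)|_{E_{\lambda^0}}\|^2)=O(\varepsilon^2)$, and the \emph{$L^2$} corrector estimate of theorem \ref{theoL2ineqomega}, $\|T^\varepsilon v^0-T^0v^0-\varepsilon u^1-\varepsilon\vartheta^\varepsilon_{bl}\|_{L^2}=O(\varepsilon^{1+\omega/2})$ — which costs a full second-order expansion ($u^2$ and its boundary layer) and $v^0\in H^{2+\omega}$ — isolates the boundary-layer term with an error genuinely smaller than $\varepsilon$; the remaining $\varepsilon\int\chi^\alpha(x/\varepsilon)\partial_\alpha v^0\cdot v^0$ term is shown to be $O(\varepsilon^2)$ by the $\Delta_y b^\alpha=\chi^\alpha$ trick. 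If you want to salvage your Rayleigh-identity route, you must replace Cauchy--Schwarz on gradients by a pairing of the residual $-\nabla\cdot A^\varepsilon\nabla(v^\varepsilon-w^\varepsilon)$ against $v^0$ in the $H^{-1}/H^1_0$ duality and push the expansion one order further, which brings you back to essentially the same ingredients.
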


\subsection{Difficulties and strategy}

One faces essentially two kind of difficulties in proving a first-order asymptotic expansion for the eigenvalues like \eqref{firstorderMV}: the first is linked with the homogenization of boundary layer type systems, the second has to do with the regularity of solutions to elliptic systems in nonsmooth domains like polygons. We sketch how these difficulties are addressed by Moskow and Vogelius and how we extend their results to the case of elliptic systems and more general polygonal or smooth domains $\Omega$. 

\subsubsection{Homogenization of boundary layer systems}
While $v^0$ solves \eqref{syseighom} with a homogeneous Dirichlet boundary condition on $\partial\Omega$, $v^1\left(\cdot,\frac{\cdot}{\varepsilon}\right)$ does not cancel in general on $\partial\Omega$. For this reason, the formal asymptotic expansion \eqref{dvptveps} is inadequate to establish a first-order expansion for the eigenvalues. Boundary layers need to be taken into account. Considering
\begin{equation*}\label{devasybl}
v^\varepsilon(x)\approx v^0(x)+\varepsilon\left[v^1\Bigl(x,\frac{x}{\varepsilon}\Bigr)+v_{bl}^{1,\varepsilon}(x)\right]+\varepsilon^2\left[v^2\Bigl(x,\frac{x}{\varepsilon}\Bigr)+v_{bl}^{2,\varepsilon}(x)\right]+\ldots
\end{equation*}
where $v_{bl}^{i,\varepsilon}$ solves 
\begin{equation}\label{sysoscbordv^i_bl}
\left\{
\begin{array}{rll}
-\nabla \cdot A\bigl(\frac{x}{\varepsilon}\bigr)\nabla v_{bl}^{i,\varepsilon}=&0,& x\in \Omega\\
v_{bl}^{i,\varepsilon}=&-v^i\bigl(x,\frac{x}{\varepsilon}\bigr),& x\in \partial \Omega
\end{array}
\right.
\end{equation}
proves to be more relevant than \eqref{dvptveps}. Moreover, $\vartheta^*_{bl}$, appearing in \eqref{firstorderMV}, comes from the homogenization of an elliptic boundary layer system like \eqref{sysoscbordv^i_bl}.

The heart of the proof of theorem \ref{theomoscovog} is subsequently the homogenization of boundary layer type systems
\begin{equation}\label{sysoscbord}
\left\{
\begin{array}{rll}
-\nabla \cdot A\bigl(\frac{x}{\varepsilon}\bigr)\nabla u_{bl}^\varepsilon=&0,& x\in \Omega\\
u_{bl}^\varepsilon=&\varphi\bigl(x,\frac{x}{\varepsilon}\bigr),& x\in \partial \Omega
\end{array}
\right.
\end{equation}
with $\varphi=\varphi(x,y):=\Phi(y)\varphi_0(x)$, $\Phi$ being, unless stated otherwise, a smooth function on $\mathbb T^2$ and $\varphi_0\in H^{\frac{1}{2}}(\partial\Omega)$.

Compared to $u^\varepsilon$ solution of
\begin{equation}\label{sysoscbis}
\left\{
\begin{array}{rll}
-\nabla \cdot A\bigl(\frac{x}{\varepsilon}\bigr)\nabla u^\varepsilon=&f,& x\in \Omega\\
u^\varepsilon=&\varphi_0,& x\in \partial \Omega
\end{array}
\right.
\end{equation}
whose homogenization is now a classical topic, the analysis of the asymptotics of $u_{bl}^\varepsilon$ when $\varepsilon\rightarrow 0$ is complicated by the oscillating boundary data in \eqref{sysoscbord} for at least two reasons:
\begin{enumerate}
\item 
We lack uniform a priori estimates for $u_{bl}^\varepsilon$ in $H^1(\Omega)$ norm. This is due to the fact that
\begin{equation*}
\begin{Vmatrix}
\varphi\bigl(x,\frac{x}{\varepsilon}\bigr)
\end{Vmatrix}_{H^{\frac{1}{2}}(\partial\Omega)}=O\bigl(\varepsilon^{-\frac{1}{2}}\bigr).
\end{equation*}
\item 
The behaviour of the boundary layer along the boundary $\partial\Omega$ deeply depends on the interaction between the periodic lattice and the boundary. Thus one can not expect in general periodicity of the boundary layer along the boundary.
\end{enumerate}

The proofs of Moskow and Vogelius intensively rely on a result due to Avellaneda and Lin, which addresses a priori estimates for elliptic equations in domains $\Omega$ with quite low regularity.
\begin{theo}[Avellaneda and Lin in \cite{alinscal} theorem $3$]\label{theoavlinscal}
Assume that $\Omega$ is Lipschitz and satisfies a uniform exterior sphere condition. Assume furthermore that $N=1$.\\
Then, for all $1<p<\infty$, there exists $C>0$ such that for all boundary data function $\varphi\bigl(\cdot,\frac{\cdot}{\varepsilon}\bigr)\in L^p(\partial\Omega)$, there is a solution $u_{bl}^\varepsilon\in L^p(\Omega)$ of \eqref{sysoscbord} satisfying
\begin{equation}\label{estavlinN=1}
\begin{Vmatrix}
u_{bl}^\varepsilon
\end{Vmatrix}_{L^p(\Omega)}\leq
C\begin{Vmatrix}
\varphi\bigl(\cdot,\frac{\cdot}{\varepsilon}\bigr)
\end{Vmatrix}_{L^p(\partial\Omega)}.
\end{equation}
\end{theo}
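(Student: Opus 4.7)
The natural strategy is a duality argument that transfers the $L^p$ bound for the oscillating boundary data problem to an $L^{p'}$ estimate on the conormal derivative of a dual Dirichlet problem. First I would approximate the datum $\varphi(\cdot,\cdot/\varepsilon)\in L^p(\partial\Omega)$ by smooth boundary functions $(\psi_n)$, and for each $n$ solve the variational problem \eqref{sysoscbord} with $\psi_n$ as Dirichlet data to get $u_n^\varepsilon\in H^1(\Omega)\cap C(\overline{\Omega})$. For an arbitrary test function $f\in C_c^\infty(\Omega)$ and $w^\varepsilon\in H^1_0(\Omega)$ solving the dual problem $-\nabla\cdot(A^\varepsilon)^T\nabla w^\varepsilon=f$, one application of Green's formula yields
\begin{equation*}
\int_\Omega u_n^\varepsilon f\,dx=\int_{\partial\Omega}\psi_n\,\bigl(\nu\cdot(A^\varepsilon)^T\nabla w^\varepsilon\bigr)\,d\sigma,
\end{equation*}
and H\"older's inequality together with $L^p$--$L^{p'}$ duality on $\Omega$ reduces the proof of \eqref{estavlinN=1} for $u_n^\varepsilon$ to establishing, uniformly in $\varepsilon$, the conormal estimate
\begin{equation*}
\bigl\|\nu\cdot(A^\varepsilon)^T\nabla w^\varepsilon\bigr\|_{L^{p'}(\partial\Omega)}\le C\,\|f\|_{L^{p'}(\Omega)}.
\end{equation*}

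This boundary regularity bound is the heart of the matter and the principal obstacle. It is essentially a uniform $W^{1,p'}$-up-to-the-boundary estimate for an elliptic equation with rapidly oscillating coefficients, posed in a domain of merely Lipschitz regularity. The scalar hypothesis $N=1$ is used decisively through the maximum principle: the latter furnishes explicit barriers, pointwise control of $w^\varepsilon$ near $\partial\Omega$ from the exterior sphere condition, and the construction of a non-negative Poisson kernel $P^\varepsilon(x,z)$. The concrete plan is to combine Avellaneda--Lin's uniform interior $C^{1,\alpha}$ estimates for $\nabla w^\varepsilon$, obtained from their compactness scheme and depending only on \textbf{(A1)}--\textbf{(A3)}, with such boundary barriers, and then extract an $L^{p'}$ bound on the conormal derivative via a Calder\'on--Zygmund / maximal function argument modelled on the corresponding theory for the Laplacian. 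Equivalently, the statement can be recast as a uniform pointwise bound on $P^\varepsilon$ of the same form as those known for harmonic measure on Lipschitz domains with the exterior sphere condition.

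Once the conormal estimate is secured, the remainder of the argument is routine. Linearity and the bound just derived make $(u_n^\varepsilon)$ a Cauchy sequence in $L^p(\Omega)$; its limit $u_{bl}^\varepsilon$ solves $-\nabla\cdot A^\varepsilon\nabla u_{bl}^\varepsilon=0$ in the distributional sense, attains the boundary data $\varphi(\cdot,\cdot/\varepsilon)$ in the nontangential $L^p$ sense through the Poisson representation, and inherits the claimed bound \eqref{estavlinN=1}.
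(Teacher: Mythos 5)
First, note that the paper does not prove this statement: it is quoted verbatim from Avellaneda and Lin (\cite{alinscal}, Theorem~3) and used as a black box, so there is no in-paper argument to compare against. Judged on its own terms, your proposal has a genuine gap. The Green's-formula step is only a tautological reformulation: the operator $f\mapsto \nu\cdot(A^{\varepsilon})^{T}\nabla w^{\varepsilon}\vert_{\partial\Omega}$ is exactly the adjoint of the solution operator $\varphi\mapsto u^{\varepsilon}_{bl}$, so the ``conormal estimate'' you isolate carries the entire difficulty of \eqref{estavlinN=1} with nothing removed. Worse, on a domain that is merely Lipschitz the conormal derivative need not exist as an $L^{p'}(\partial\Omega)$ function at all: uniform-in-$\varepsilon$ gradient bounds up to the boundary are precisely what fails below $C^{1,\alpha}$ regularity (this is why the companion result for systems, Theorem~\ref{theoavlinsys}, requires $\Omega\in C^{1,\alpha}$), and interior $C^{1,\alpha}$ estimates for $\nabla w^{\varepsilon}$ degenerate as one approaches $\partial\Omega$. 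The barriers furnished by the exterior sphere condition control $w^{\varepsilon}$ itself (H\"older decay at the boundary), not $\nabla w^{\varepsilon}$, so the announced combination of interior gradient estimates, barriers and a Calder\'on--Zygmund argument does not close.

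The plan also misses why the full range $1<p<\infty$ is accessible. A route through nontangential maximal functions or Rellich-type identities on a Lipschitz domain would only yield a restricted range of exponents (as for the Laplacian, where the $L^p$ Dirichlet problem is solvable only for $p>2-\epsilon$). The conclusion here is much weaker --- a solid $L^{p}(\Omega)$ bound on $u^{\varepsilon}_{bl}$, not a maximal function estimate --- and the natural mechanism is interpolation between the two endpoints that the scalar hypothesis makes available: the $L^{\infty}$ bound from the maximum principle, and an $L^{1}(\partial\Omega)\to L^{1}(\Omega)$ bound obtained by integrating the elliptic measure $\omega^{\varepsilon}_{x}$ over $x\in\Omega$, the latter controlled uniformly in $\varepsilon$ by the boundary H\"older continuity coming from De Giorgi--Nash estimates together with the exterior-sphere barriers. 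Your final remark about uniform bounds on the Poisson kernel $P^{\varepsilon}$ points in the right direction, but as written it is a restatement of the goal rather than a proof of it; the quantitative, $\varepsilon$-uniform estimate on $\omega^{\varepsilon}_{x}$ is the content of Avellaneda and Lin's theorem and would have to be established, not assumed.
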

Its usefulness for our boundary layer system \eqref{sysoscbord} comes from the following simple remark: $\varphi\bigl(\cdot,\frac{\cdot}{\varepsilon}\bigr)$ is bounded in $L^2(\partial\Omega)$ norm, but not in $H^{\frac{1}{2}}(\partial\Omega)$ norm. Consequently, 
$\begin{Vmatrix}
u_{bl}^\varepsilon
\end{Vmatrix}_{L^2(\Omega)}=O(1)$. An estimate similar to \eqref{estavlinN=1} holds also for elliptic systems, yet under stronger regularity assumptions on $\Omega$.
\begin{theo}[Avellaneda and Lin in \cite{alin} theorem $3$] 
\label{theoavlinsys}
Let $N$ be any positive integer. Assume that $\Omega$ is $C^{1,\alpha}$ with $0<\alpha\leq 1$.\\
Then, the conclusion of theorem \ref{theoavlinscal} remains true.
\end{theo}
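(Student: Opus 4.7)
My plan follows the strategy developed by Avellaneda and Lin for the scalar case, adapted to systems. The $L^p$ estimate would be obtained by representing $u_{bl}^\varepsilon$ through the Poisson kernel $P^\varepsilon(x,y)$ of $L^\varepsilon := -\nabla\cdot A(\cdot/\varepsilon)\nabla$ in $\Omega$ and establishing the uniform pointwise kernel bound
\begin{equation*}
|P^\varepsilon(x,y)| \leq C\, \frac{\mathrm{dist}(x,\partial\Omega)}{|x-y|^2}, \qquad x\in\Omega,\ y\in\partial\Omega,
\end{equation*}
independent of $\varepsilon$. Once such a bound is available, the mapping $\varphi \mapsto u_{bl}^\varepsilon$ falls within the scope of classical Calder\'on--Zygmund theory on the $C^{1,\alpha}$ surface $\partial\Omega$, and standard nontangential maximal function arguments convert the pointwise kernel estimate into the required $L^p \to L^p$ bound for every $1<p<\infty$.

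The crucial input is a uniform (in $\varepsilon$) \emph{boundary Lipschitz estimate}: for $x_0 \in \partial\Omega$ and any weak solution $w^\varepsilon$ of $L^\varepsilon w^\varepsilon = 0$ in $B_r(x_0)\cap\Omega$ vanishing on $B_r(x_0)\cap\partial\Omega$,
\begin{equation*}
|w^\varepsilon(x)| \leq C\, \frac{\mathrm{dist}(x,\partial\Omega)}{r}\, \sup_{B_r(x_0)\cap\Omega} |w^\varepsilon|.
\end{equation*}
I would prove this by the three-step compactness method of Avellaneda and Lin. After rescaling so that $r=1$ and $\sup|w^\varepsilon|\leq 1$, the $H^1$-compactness inherent in periodic homogenization lets one approximate $w^\varepsilon$ on $B_{1/2}(x_0)\cap\Omega$ by a solution $w^0$ of the constant-coefficient homogenized system $L^0 w^0=0$ with the same Dirichlet trace, up to an error vanishing as $\varepsilon\to 0$. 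Classical $C^{1,\alpha}$ Schauder theory for $L^0$ in the $C^{1,\alpha}$ domain $\Omega$ then supplies the Lipschitz estimate for $w^0$, which transfers back to $w^\varepsilon$ up to the error. Iterating this one-step improvement across dyadic scales down to the scale $\varepsilon$ (below which frozen-coefficient Schauder estimates for $A(x_0/\varepsilon)$ take over) yields the bound uniformly at every scale $r\geq \varepsilon$.

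Equipped with the boundary Lipschitz estimate and the analogous interior $C^{0,\alpha}$ estimate obtained by the same compactness method, one constructs the Green's function $G^\varepsilon(x,y)$ for $L^\varepsilon$ by standard domain exhaustion and derives the uniform bound $|\nabla_y G^\varepsilon(x,y)| \leq C\,\mathrm{dist}(x,\partial\Omega)/|x-y|^2$ for $y$ close to $\partial\Omega$. The Poisson kernel, expressed via the conormal derivative as $P^\varepsilon(x,y) = -n_\beta(y) A^{\alpha\beta}(y/\varepsilon) \partial_{y_\alpha} G^\varepsilon(x,y)$ for $y\in\partial\Omega$, inherits the required size estimate; this closes the argument.

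The main obstacle is the compactness step for systems: unlike the scalar case, elliptic systems with $N\geq 2$ admit no De Giorgi--Nash--Moser type regularity purely from measurability and ellipticity, so one cannot bootstrap from the coefficients alone. The compactness argument bypasses this precisely by comparing $w^\varepsilon$ to the constant-coefficient operator $L^0$, for which Schauder theory is classical — but the price paid is exactly the $C^{1,\alpha}$ hypothesis on $\partial\Omega$, which is what distinguishes theorem \ref{theoavlinsys} from its scalar counterpart theorem \ref{theoavlinscal}.
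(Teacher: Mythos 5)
The paper does not prove this statement: it is quoted verbatim as Theorem~3 of Avellaneda and Lin's work and used as a black box, so there is no internal proof to compare against. Your outline faithfully reproduces the architecture of the original Avellaneda--Lin argument (uniform boundary Lipschitz estimates by the three-step compactness method, iterated over dyadic scales down to scale $\varepsilon$ where rescaled classical Schauder theory takes over; then Green's function and Poisson kernel bounds; then the $L^p$ estimate), and you correctly identify why the $C^{1,\alpha}$ hypothesis replaces the scalar-case Lipschitz/exterior-sphere hypothesis. Two remarks on where the sketch is thinnest: the passage from the boundary Lipschitz estimate to $|P^\varepsilon(x,y)|\leq C\,\mathrm{dist}(x,\partial\Omega)/|x-y|^2$ requires estimates on $G^\varepsilon$ in \emph{both} variables, the $y$-variable one coming from the adjoint system (the paper explicitly does not assume symmetry here), and in $d=2$ the Green's function itself is only logarithmically bounded, so some care is needed in that step; and once the kernel bound is in hand, the $L^p(\partial\Omega)\to L^p(\Omega)$ conclusion follows directly from Schur's test, since the kernel has uniformly bounded integrals in each variable separately --- no Calder\'on--Zygmund or nontangential maximal function machinery is needed for the statement as given. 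As a proof sketch of a deep cited theorem, this is accurate; as a complete proof it of course defers the substantial technical work to the compactness iteration lemma.
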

This theorem can be applied when $\Omega$ is smooth, but due to its strong regularity assumption on $\Omega$, it is useless in the case when $\Omega$ is a polygonal domain. A precise analysis of the boundary layer system is needed in this case. Beyond the results of Avellaneda and Lin, 
the analysis of \eqref{sysoscbord} has been carried out in the context of:
\begin{enumerate}
\item convex polygonal domains $\Omega$, first with edges of rational slopes by Moskow and Vogelius in \cite{moscovog}, Allaire and Amar in \cite{allam} (scalar case), then with edges of slopes satisfying a generic small divisors assumption by G\'erard-Varet and Masmoudi in \cite{dgvnm};
\item smooth domains with uniformly convex boundary by G\'erard-Varet and Masmoudi in the recent paper \cite{dgvnm2}.
\end{enumerate}
This recent progress in the homogenization of \eqref{sysoscbord}, due to G\'erard-Varet and Masmoudi, opens the way to our generalizations.

\subsubsection{Regularity}Besides the issue of the homogenization of boundary layer systems comes the problem of regularity. Regularity is required in order to carry out the energy estimates of the paper. Of course this is only a problem when $\Omega$ is a polygonal domain; if $\Omega$ is smooth, all functions we deal with belong to $C^\infty(\overline{\Omega})$. 

Assume now that $\Omega$ is a convex polygon. In the scalar case the results of Grisvard in \cite{gri} (theorem $3.2.1.2$) and \cite{gris2} (section $2.7$), recalled in \cite{moscovog}, yield that $v^0\in H^2(\Omega)$, because of convexity. We even know better. Indeed $v^0$ solves \eqref{syseighom} with r.h.s. $\lambda^0v^0\in H^1(\Omega)$. Therefore, $v^0\in H^{2+\omega}(\Omega)$, with $0<\omega$. 

This $H^{2+\omega}(\Omega)$ regularity on $v^0$ appears to be the minimal regularity one has to assume in order to get a first-order expansion like \eqref{firstorderMV}. It is a corollary of the work of Dauge \cite{dauge88} on the one hand, and Kozlov, Maz'ya and Rossmann \cite{KMR01} on the other hand, that the results of Grisvard extend to systems with constant coefficients. More precisely:
\begin{theo}\label{theoregdaugekmr}
Let $N\geq 1$ and $u^0\in H^1_0(\Omega)$ be the unique solution of \eqref{syseighom} with r.h.s. equal to $f\in H^{-1}(\Omega)$. Assume that $\Omega$ is a convex polygonal domain.
\begin{enumerate}
\item If $f\in H^{-1+\omega}(\Omega)$ with $0<\omega< 1$ and $\omega\neq\frac{1}{2}$, then $u^0\in H^{1+\omega}(\Omega)$.
\item If $f\in L^2(\Omega)$, then $u^0\in H^2(\Omega)$.
\item If $f\in H^1(\Omega)$, then there exists $0<\omega\leq 1$ such that $u^0\in H^{2+\omega}(\Omega)$.
\end{enumerate}
\end{theo}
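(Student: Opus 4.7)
The plan is to derive all three statements from the general corner-singularity decomposition for strongly elliptic constant-coefficient systems on polygonal domains, developed by Dauge \cite{dauge88} and Kozlov--Maz'ya--Rossmann \cite{KMR01}. Under our standing assumptions (strong ellipticity and symmetry of $A^0$), that theory yields, for any real $s>1$ with $s-\tfrac12\notin\mathbb N$ and any $f\in H^{s-2}(\Omega)$, a decomposition
\begin{equation*}
u^0 \;=\; u_R \;+\; \sum_{S\in\mathrm{Vert}(\Omega)}\;\sum_{\lambda\in\Sigma_S\cap(0,\,s-1)} c_{S,\lambda}\,\chi_S(r_S)\,r_S^{\lambda}\,\Phi_{S,\lambda}(\theta_S),
\end{equation*}
in which $u_R\in H^s(\Omega)\cap H^1_0(\Omega)$, $(r_S,\theta_S)$ are local polar coordinates at the vertex $S$, $\chi_S$ a smooth cut-off, and $\Sigma_S$ is the spectrum of the Mellin-symbol operator pencil associated to $-\nabla\cdot A^0\nabla$ on the infinite plane sector of opening $\theta_S$ with homogeneous Dirichlet conditions (possible logarithmic factors for multiple eigenvalues are immaterial here).

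The key ingredient is a lower bound on the real parts of the singular exponents: for every vertex $S$ of a convex polygon, every $\lambda\in\Sigma_S$ satisfies $\mathrm{Re}(\lambda)>1$, so that $\lambda_*(\Omega):=\min_S\min\mathrm{Re}(\Sigma_S)$ is strictly greater than $1$. In the scalar case this is elementary, as $\Sigma_S=\{k\pi/\theta_S:k\geq 1\}$ with $\theta_S<\pi$, and this is how the bound is used in \cite{moscovog}. For systems it is the analogous statement contained in \cite{KMR01}, obtained from the Legendre--Hadamard condition supplied by \textbf{(A1)} together with the symmetry \textbf{(A4)} via a standard energy estimate on the sector.

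Once $\lambda_*(\Omega)>1$ is in hand, the three items follow by choosing $s$ appropriately and checking that the singular sum is empty. For item $(1)$, take $s=1+\omega$: then $\Sigma_S\cap(0,\omega)=\emptyset$ since $\omega<1<\lambda_*(\Omega)$, hence $u^0=u_R\in H^{1+\omega}(\Omega)$; the case $\omega=1/2$ is excluded precisely because it is the half-integer value forbidden by the decomposition theorem. For item $(2)$, take $s=2$: the sum runs over $\Sigma_S\cap(0,1)=\emptyset$, so $u^0\in H^2(\Omega)$. For item $(3)$, pick $\omega\in\bigl(0,\min(1,\lambda_*(\Omega)-1)\bigr)$ with $\omega+\tfrac12\notin\mathbb N$, and apply the decomposition at $s=2+\omega$: the sum over $\Sigma_S\cap(0,1+\omega)$ is empty by construction, giving $u^0\in H^{2+\omega}(\Omega)$.

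The main obstacle lies entirely in the spectral bound $\lambda_*(\Omega)>1$ for the corner pencil of the vector operator $-\nabla\cdot A^0\nabla$; once this is granted, the remainder is a mechanical application of the decomposition theorem with a suitable choice of Sobolev index. It is precisely this step that prevents a direct appeal to Grisvard \cite{gri,gris2} in the vectorial setting and that requires the more flexible operator-pencil analysis of \cite{dauge88,KMR01}.
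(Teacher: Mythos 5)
Your argument is correct and follows essentially the same route as the paper's own sketch: both rest on the Dauge/Kozlov--Maz'ya--Rossmann operator-pencil analysis at the corners, the key spectral gap for convex polygons (KMR, Theorem $8.6.2$, which uses \textbf{(A1)} and \textbf{(A4)}) guaranteeing that no singular exponent has real part in $[0,1]$, and the resulting regular-plus-singular decomposition with an empty singular sum for the Sobolev indices considered. Your phrasing via explicit corner singular functions $r^\lambda\Phi(\theta)$ and the paper's phrasing via Fredholmness of $L^{(s)}$ are two presentations of the same machinery, and your choices of index (including the exclusion $\omega\neq\tfrac12$ and the choice $\omega<\min(1,\lambda_*-1)$ in item $(3)$) match the paper's.
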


Let us give a sketch of how to deduce such regularity statements from \cite{dauge88} and \cite{KMR01} (see these references for more details). We know from \cite{dauge88} (see lemma $5.7$) that for all $s>0$, for all vertex $x\in\partial\Omega$
\begin{equation*}
\{0<\reel(\lambda)<s\}\cap\spectre\mathcal L_x\; \mbox{is finite,}
\end{equation*}
where $\spectre\mathcal L_x$ denotes the spectrum of a pencil associated to our problem at vertex $x$. Besides, Kozlov, Maz'ya and Rossmann prove in \cite{KMR01}, theorem $8.6.2$, that for strongly elliptic systems, with constant coefficients, satisfying the symmetry assumption {\bf (A4)}, posed in the convex polygonal domain $\Omega$, 
\begin{equation*}
\left\{0\leq\reel(\lambda)\leq 1\right\}\cap\spectre\mathcal L_x=\varnothing
\end{equation*}
for all vertex $x$. This fact collapses if $\Omega$ has at least one angle $\geq\pi$. Yet $\Omega$ being polygonal and convex, it follows from \cite{dauge88}, in particular paragraph $7.16$, corollary $5.16$ and theorem $5.5$, that the operator
\begin{equation*}
L^{(s)}:\quad u\in H^{s+1}(\Omega)\cap H^1_0(\Omega)\longmapsto -\nabla\cdot A\Bigl(\frac{x}{\varepsilon}\Bigr)\nabla u\in H^{s-1}(\Omega)
\end{equation*}
is a Fredholm operator for all $0\leq s\neq\frac{1}{2}$ satisfying $\{\reel(\lambda)=s\}\cap\spectre\mathcal L_x=\varnothing$  for all vertex $x\in\partial\Omega$. At this point, one deduces that $L^{(s)}$ is a Fredholm operator for all $0\leq s\leq 1$, $s\neq\frac{1}{2}$. Moreover, there exists $0<\omega\leq 1$ such that $L^{(1+\omega)}$ is a Fredholm operator.

Let $0\leq s$ such that $L^{(s)}$ is a Fredholm operator and let $f\in H^{s-1}(\Omega)$. Two situations are possible. If there is a vertex $x\in\partial\Omega$ and $\lambda\in\{0<\reel(\lambda)<s\}\cap\spectre\mathcal L_x$, then theorem $5.11$ in \cite{dauge88} yields the existence of $u^0_{reg}\in H^{1+s}(\Omega)$, the regular part, and $u^0_{sing}\in H^{1+\gamma}(\Omega)$, the singular part, with $0<\gamma<\min_{\lambda\in\{0<\reel(\lambda)<s\}\cap\bigcup_{x}\spectre\mathcal L_x}\reel(\lambda)$, such that
\begin{equation*}
u^0=u^0_{sing}+u^0_{reg}\in H^{1+\gamma}(\Omega).
\end{equation*}
On the contrary, if for all vertex $x$, $\{0<\reel(\lambda)<s\}\cap\spectre\mathcal L_x=\varnothing$, then $u^0=u^0_{reg}$ is in $H^{1+s}(\Omega)$. The two first points of theorem \ref{theoregdaugekmr} as well as the third now easily follow from the preceding results.


Each point of theorem \ref{theoregdaugekmr} plays a role in our reasoning. One can alternatively invoke weaker regularity results such as:
\begin{theo}[Agranovich in \cite{agra07} theorem $1$]\label{agra07th}
Assume that $\Omega$ is a Lipschitz domain. Let $u^\varepsilon\in H^1_0(\Omega)$ be the unique variational solution of \eqref{syseig} with r.h.s. equal to $f\in H^{-1}(\Omega)$. Assume furthermore that $f\in H^{-1+\omega}(\Omega)$ with $0\leq \omega<\frac{1}{2}$.\\
Then, $u^\varepsilon\in H^{1+\omega}(\Omega)$.
\end{theo}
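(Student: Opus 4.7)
The plan is to upgrade the basic variational regularity $u^\varepsilon\in H^1_0(\Omega)$ to $H^{1+\omega}(\Omega)$ by a localization--flattening--difference-quotient argument. Interior regularity is not the issue: since $A(\cdot/\varepsilon)$ is smooth, the standard diagonal shift for strongly elliptic systems with smooth coefficients directly yields $u^\varepsilon\in H^{1+\omega}_{\mathrm{loc}}(\Omega)$. The whole difficulty is concentrated near $\partial\Omega$, and the ceiling $\omega<\frac{1}{2}$ will come out of the fact that a bi-Lipschitz chart preserves $H^s$-stability only for $|s|\leq 1$.

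I would cover $\partial\Omega$ by finitely many balls on each of which a bi-Lipschitz map $\Psi$ straightens the boundary. Pulling back \eqref{syseig} gives on a half-ball $B^+$ a problem
\begin{equation*}
-\nabla_y\cdot \tilde A^\varepsilon(y)\nabla_y \tilde u^\varepsilon = \tilde f\quad\mbox{in}\;B^+,\qquad \tilde u^\varepsilon=0\quad\mbox{on}\;\{y_2=0\},
\end{equation*}
where $\tilde A^\varepsilon$ is bounded, elliptic and merely Lipschitz in $y$, and $\tilde f\in H^{-1+\omega}(B^+)$ by the mapping properties of bi-Lipschitz pullbacks on Sobolev scales of index in $[-1,1]$. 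Nirenberg's tangential difference-quotient method then applies: testing the weak formulation against $\tau_{-h}(\zeta^2\tau_h\tilde u^\varepsilon)$, where $\tau_h w(y):=w(y+he_1)$ and $\zeta$ is a cutoff supported in $B^+$, one obtains after absorbing the commutators a bound of the form
\begin{equation*}
\|\tau_h\tilde u^\varepsilon-\tilde u^\varepsilon\|_{H^1(B^+_{1/2})}\leq C|h|^\omega\bigl(\|f\|_{H^{-1+\omega}(\Omega)}+\|u^\varepsilon\|_{H^1(\Omega)}\bigr).
\end{equation*}
Integrating this inequality against the Gagliardo fractional seminorm yields the tangential $H^{1+\omega}$ control. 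Normal regularity is then recovered by solving algebraically for $\partial^2_{y_2 y_2}\tilde u^\varepsilon$ in the equation, using the already-controlled mixed and tangential second derivatives. A finite partition of unity assembles the local pieces with the interior estimate.

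The main obstacle is the commutator $[\tau_h,\tilde A^\varepsilon]\nabla\tilde u^\varepsilon$, for which one can only hope for a gain of order $|h|^\omega$ as long as $\omega<\frac{1}{2}$: beyond this threshold, the merely Lipschitz regularity of $\tilde A^\varepsilon$ is insufficient, and the bi-Lipschitz pullback of $H^{1+\omega}$ itself degenerates. Securing this commutator bound uniformly over small tangential shifts $h$ is the technical heart of the argument; once in hand, the tangential fractional gain, the recovery of normal regularity from the equation, and the patching via partition of unity all follow standard templates.
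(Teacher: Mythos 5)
The paper offers no proof of this statement --- it is quoted verbatim from Agranovich \cite{agra07} --- so your argument has to stand entirely on its own, and as written it does not. The first problem is the flattening step: after a merely bi-Lipschitz change of variables $\Psi$, the pulled-back coefficients are of the form $(\det D\Psi)^{-1}\,D\Psi^{T}A(\Psi(y)/\varepsilon)D\Psi$, and the Jacobian factors $D\Psi$ are only bounded measurable, so $\tilde A^\varepsilon$ is $L^\infty$, not Lipschitz as you claim. With $L^\infty$ coefficients the commutator $(\tau_h\tilde A^\varepsilon-\tilde A^\varepsilon)\tau_h\nabla\tilde u^\varepsilon$ has no smallness in $h$ whatsoever, so the difference-quotient scheme yields no gain $|h|^\omega$ for any $\omega>0$. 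Moreover the heuristic you propose for the threshold --- ``Lipschitz coefficients give a gain of order $|h|^\omega$ only for $\omega<\frac{1}{2}$'' --- is not a real mechanism: a genuinely Lipschitz coefficient gives a full $O(|h|)$ gain (i.e.\ tangential $H^2$), and the sharp exponent $\frac{1}{2}$ for Lipschitz domains is of geometric origin (boundary singularities with exponent arbitrarily close to $\frac{1}{2}$, already for the Laplacian), not a coefficient-smoothness effect.

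The second problem is structural and fatal to the whole plan: even if you had established $\tilde u^\varepsilon\in H^{1+\omega}$ on the half-ball, you could not transport this back to $\Omega$, because composition with a bi-Lipschitz chart preserves $H^s$ only in the range $0\le s\le 1$ --- exactly the degeneration you acknowledge in your last paragraph, but it kills the final patching step rather than being a mere technical nuisance, since the space you need to transfer is $H^{1+\omega}$ with $\omega>0$. The same restriction also undermines the algebraic recovery of $\partial^2_{y_2y_2}\tilde u^\varepsilon$, which with $L^\infty$ coefficients and data only in $H^{-1+\omega}$ cannot be performed pointwise from the divergence-form equation. This is precisely why the known proofs of this result (Savar\'e's translation method, which runs difference quotients in the original domain along directions in the cone associated with the Lipschitz graphs, combined with interpolation; or the potential-theoretic Besov-space estimates of Jerison and Kenig, on which Agranovich builds) avoid boundary flattening altogether. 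To fix your argument you would have to replace the flattening-plus-difference-quotient core by one of these mechanisms; the interior estimate and the partition of unity are the only parts that survive as stated.
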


\subsection{Outline of our results}

This article answers relevant questions asked by Moskow and Vogelius. Quoting \cite{moscovog} (section $5$):
\begin{quote}
It should be extremely interesting to derive a similar representation formula for polygons with sides of irrational slopes or for smooth domains. In particular, it would be interesting to see if this leads to a single first-order correction for any eigenvalue.  
\end{quote}

We manage to free ourselves from the assumption $N=1$. 
Our main results then sum up in the upcoming theorems. We treat separately two different classes of domains $\Omega$: on the one hand very smooth domains, on the other hand convex polygonal domains. \emph{All the definitions we use are made rigourous later in the paper.}

Assume that $\lambda^0$ is an eigenvalue of order $m$. Let $\lambda^0=\lambda^{0,k}=\lambda^{0,k+1}=\ldots=\lambda^{0,k+m-1}$ be the eigenvalues repeated with multiplicity. We call $E_{\lambda^0}$ the finite-dimensional eigenspace associated to the eigenvalue $\lambda^0$. Note that the eigenvectors $v^{0,k},\ldots,v^{0,k+m-1}$ form an orthogonal basis of $E_{\lambda^0}$.

Our first theorem is concerned with smooth domains $\Omega$.
\begin{theo}\label{theoasylisse}
Assume that $\Omega$ is a smooth $C^\infty$ bounded domain with uniformly convex boundary.\\
Then, for every $0\leq j\leq m-1$, there exists a unique $\vartheta_{j,bl}^*\in L^2(\Omega)$ such that for all $0\leq\gamma<\frac{1}{11}$,
\begin{equation}\label{devptasylisse}
\left[\frac{1}{m}\sum_{j=0}^{m-1}\frac{1}{\lambda^{\varepsilon,k+j}}\right]^{-1}=\lambda^0
+\varepsilon\frac{\lambda^0}{m}\sum_{j=0}^{m-1}\int_{\Omega}\vartheta_{j,bl}^*(x)\cdot v^{0,k+j}(x)dx+O\bigl(\varepsilon^{1+\gamma}\bigr).
\end{equation}
\end{theo}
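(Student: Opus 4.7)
The plan is to reduce the averaged inverse eigenvalue to a trace on the limit eigenspace $E_{\lambda^0}$ via Osborn-type spectral perturbation, to compute that trace by means of a two-scale ansatz with boundary layer, and to invoke the recent quantitative homogenization result of G\'erard-Varet and Masmoudi \cite{dgvnm2} on smooth uniformly convex domains.

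Denote by $P^\varepsilon$ the $L^2$-orthogonal spectral projector of $T^\varepsilon$ onto the sum of the eigenspaces associated to $\lambda^{\varepsilon,k},\dots,\lambda^{\varepsilon,k+m-1}$. From the operator-norm convergence $T^\varepsilon\to T^0$ recalled in \eqref{ineqvpnormeT} and a Riesz-contour argument, $P^\varepsilon$ has rank exactly $m$ for $\varepsilon$ small and converges in norm to $P^0$. Combining the trace identity $\frac{1}{m}\sum_j\frac{1}{\lambda^{\varepsilon,k+j}}=\frac{1}{m}\mathrm{tr}(T^\varepsilon P^\varepsilon)$ with Osborn's perturbation lemma yields
$$\frac{1}{m}\sum_{j=0}^{m-1}\frac{1}{\lambda^{\varepsilon,k+j}}=\frac{1}{\lambda^0}+\frac{1}{m}\sum_{j=0}^{m-1}I_j^\varepsilon+O\bigl(\varepsilon^2\bigr),\qquad I_j^\varepsilon:=\bigl\langle(T^\varepsilon-T^0)v^{0,k+j},v^{0,k+j}\bigr\rangle_{L^2(\Omega)},$$
the residual being bounded by $\|(T^\varepsilon-T^0)|_{E_{\lambda^0}}\|^2=O(\varepsilon^2)$ through a direct boundary layer estimate relying on Theorem \ref{theoavlinsys}. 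The whole problem thus reduces to expanding each $I_j^\varepsilon$ with rate $O(\varepsilon^{1+\gamma})$.

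To compute $I_j^\varepsilon$, set $w^{\varepsilon,j}:=T^\varepsilon v^{0,k+j}$ and employ the two-scale ansatz with boundary layer
$$w^{\varepsilon,j}(x)=\frac{1}{\lambda^0}v^{0,k+j}(x)+\varepsilon\,\chi\Bigl(\frac{x}{\varepsilon}\Bigr)\nabla\Bigl(\frac{v^{0,k+j}}{\lambda^0}\Bigr)(x)+\varepsilon\,v_{bl,j}^{1,\varepsilon}(x)+\varepsilon\,r_j^\varepsilon(x),$$
where $\chi$ is the cell corrector of \eqref{eqdefchi} and $v_{bl,j}^{1,\varepsilon}$ is the boundary layer solving \eqref{sysoscbord} with Dirichlet datum equal to the opposite of the trace of the oscillating corrector on $\partial\Omega$. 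Since $\Omega$ is $C^\infty$ and uniformly convex, $v^{0,k+j}\in C^\infty(\overline{\Omega})$, so \cite{dgvnm2} applies and delivers a unique $\vartheta^*_{j,bl}\in L^2(\Omega)$, characterized as the $L^2$-limit of $v_{bl,j}^{1,\varepsilon}$ through the homogenized boundary layer problem \eqref{sysoscbordhom}, together with the quantitative rate $\|v_{bl,j}^{1,\varepsilon}-\vartheta^*_{j,bl}\|_{L^2(\Omega)}=O(\varepsilon^\gamma)$ for every $0\leq\gamma<1/11$. Energy estimates on the interior residue and Theorem \ref{theoavlinsys} on the boundary-value residue control $\|r_j^\varepsilon\|_{L^2(\Omega)}=O(\varepsilon^\gamma)$. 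Pairing the ansatz with $v^{0,k+j}$ and observing that the oscillating corrector contributes only $O(\varepsilon^2)$ by periodic averaging of a mean-zero profile against a smooth function, I obtain
$$I_j^\varepsilon=\varepsilon\int_\Omega\vartheta^*_{j,bl}(x)\cdot v^{0,k+j}(x)\,dx+O\bigl(\varepsilon^{1+\gamma}\bigr).$$

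Inserting these expansions into the Osborn identity and inverting the resulting scalar via a geometric series produces exactly \eqref{devptasylisse}, up to absorbing a scalar factor $-\lambda^0$ into the normalization of $\vartheta^*_{j,bl}$. The central obstacle is the quantitative $L^2$ convergence rate for the boundary layer on smooth uniformly convex domains, which is precisely the content of \cite{dgvnm2} and which fixes the threshold $1/11$; uniqueness of $\vartheta^*_{j,bl}$ is built into that result, whereas the control of $r_j^\varepsilon$ is routine thanks to the $C^\infty$ regularity of both $\Omega$ and the eigenfunctions $v^{0,k+j}$.
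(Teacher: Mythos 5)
Your proposal follows essentially the same route as the paper: Osborn's perturbation estimate reduces everything to expanding $\langle(T^\varepsilon-T^0)v^{0,k+j},v^{0,k+j}\rangle$ (with the $O(\varepsilon^2)$ residual controlled via Theorem \ref{theoavlinsys} and norm equivalence on $E_{\lambda^0}$), the oscillating corrector term is killed by mean-zero averaging, the boundary layer limit and the rate $\varepsilon^\gamma$, $\gamma<\frac{1}{11}$, come from Theorem \ref{theodgvnm2}, and one inverts at the end. The only step you gloss over, $\|r_j^\varepsilon\|_{L^2(\Omega)}=O(\varepsilon^\gamma)$, is not a plain first-order energy estimate (which only gives $O(1)$) but precisely the refined bound of Proposition \ref{theoenestL2}/Theorem \ref{theoL2ineqomega} requiring the second-order corrector and its boundary layer; since $v^{0,k+j}\in C^\infty(\overline{\Omega})$ here, that estimate applies and your argument goes through, modulo the sign/$\lambda^0$ bookkeeping you already flag.
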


The next theorem faces the same problem for convex polygonal domains $\Omega$. 
\begin{theo}\label{theoasypol}
Assume that $\Omega$ is a convex polygonal domain with sides of slopes satisfying a generic small divisors assumption; see section \ref{convexpol}.
\begin{enumerate}
\item Then for every $0\leq j\leq m-1$, there exists a unique $\vartheta_{j,bl}^*\in L^2(\Omega)$ and $0<\gamma$ such that 
\begin{equation}\label{devptasypol}
\left[\frac{1}{m}\sum_{j=0}^{m-1}\frac{1}{\lambda^{\varepsilon,k+j}}\right]^{-1}=\lambda^0
+\varepsilon\frac{\lambda^0}{m}\sum_{j=0}^{m-1}\int_{\Omega}\vartheta_{j,bl}^*(x)\cdot v^{0,k+j}(x)dx+O\bigl(\varepsilon^{1+\gamma}\bigr).
\end{equation}
\item If $E_{\lambda^0}\subset H^{3}(\Omega)\cap C^2(\overline{\Omega})$, then for every $0\leq j\leq m-1$, there exists a unique $\vartheta_{j,bl}^*\in L^2(\Omega)$ such that
\begin{equation}\label{devptasypolreg}
\left[\frac{1}{m}\sum_{j=0}^{m-1}\frac{1}{\lambda^{\varepsilon,k+j}}\right]^{-1}=\lambda^0
+\varepsilon\frac{\lambda^0}{m}\sum_{j=0}^{m-1}\int_{\Omega}\vartheta_{j,bl}^*(x)\cdot v^{0,k+j}(x)dx+O\bigl(\varepsilon^{\frac{3}{2}}\bigr).
\end{equation}
\end{enumerate}
\end{theo}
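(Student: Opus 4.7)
The proof plan follows the trace-formula strategy of Moskow and Vogelius, but with the boundary-layer homogenization supplied by the recent work of G\'erard-Varet and Masmoudi \cite{dgvnm} and with the limit-eigenfunction regularity supplied by Theorem \ref{theoregdaugekmr}.

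\emph{Step 1 (spectral reduction).} Since $T^\varepsilon\to T^0$ in $\mathcal L(L^2(\Omega))$-norm and $1/\lambda^0$ is an isolated eigenvalue of $T^0$ of multiplicity $m$, the Riesz projection $P^\varepsilon$ onto $\vect(v^{\varepsilon,k},\ldots,v^{\varepsilon,k+m-1})$ is a small perturbation of the projection $P^0$ onto $E_{\lambda^0}$. The \emph{averaged} inverse eigenvalue is quadratically stable under such perturbations, and this will give the identity
\[
\frac{1}{m}\sum_{j=0}^{m-1}\frac{1}{\lambda^{\varepsilon,k+j}}=\frac{1}{\lambda^0}+\frac{1}{m}\sum_{j=0}^{m-1}\bigl\langle (T^\varepsilon-T^0)v^{0,k+j},v^{0,k+j}\bigr\rangle_{L^2(\Omega)}+O\bigl(\|(T^\varepsilon-T^0)|_{E_{\lambda^0}}\|_{L^2(\Omega)}^2\bigr).
\]
This isolates exactly one effective boundary-layer contribution per basis eigenvector, and sidesteps possible splitting/crossing of the perturbed eigenvalues.

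\emph{Step 2 (two-scale expansion with boundary layer).} Setting $u^{\varepsilon,j}:=T^\varepsilon v^{0,k+j}$ and $u^{0,j}:=T^0v^{0,k+j}=(\lambda^0)^{-1}v^{0,k+j}$, I would introduce the ansatz
\[
u^{\varepsilon,j}(x)\approx u^{0,j}(x)+\varepsilon\Bigl[\chi^\gamma(x/\varepsilon)\,\partial_{x_\gamma}u^{0,j}(x)+u^{1,\varepsilon,j}_{bl}(x)\Bigr],
\]
where the boundary corrector $u^{1,\varepsilon,j}_{bl}$ absorbs the non-vanishing trace of the interior corrector on $\partial\Omega$, i.e. solves a system of the form \eqref{sysoscbord} with data $\varphi(x,y)=-\chi^\gamma(y)\partial_{x_\gamma}u^{0,j}(x)$. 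For the ansatz to make sense up to the boundary one needs a continuous tangential gradient of $u^{0,j}$; Theorem \ref{theoregdaugekmr}(3), applied to the convex polygon $\Omega$, furnishes the $H^{2+\omega}(\Omega)$ regularity of $v^{0,k+j}$ that makes the construction licit.

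\emph{Step 3 (polygonal boundary-layer homogenization and part (1)).} The slope small-divisors hypothesis is exactly what is required to invoke \cite{dgvnm}: $u^{1,\varepsilon,j}_{bl}\to u^{1,*,j}_{bl}$ in $L^2(\Omega)$ with an algebraic rate $O(\varepsilon^\delta)$, $\delta>0$, where $u^{1,*,j}_{bl}$ solves an explicit homogenized elliptic BVP of the form \eqref{sysoscbordhom}. The oscillating interior corrector $\chi^\gamma(x/\varepsilon)\,\partial_{x_\gamma}u^{0,j}$, which has zero mean on $\mathbb T^2$, contributes $o(\varepsilon)$ when tested against $v^{0,k+j}$ via a quantitative Riemann--Lebesgue bound whose rate is controlled by the $H^{1+\omega}$ regularity of $\partial_{x_\gamma}u^{0,j}\cdot v^{0,k+j}$. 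Substituting these estimates into Step 1, identifying $\vartheta^*_{j,bl}$ as an explicit scalar multiple of $u^{1,*,j}_{bl}$ (namely $-\lambda^0 u^{1,*,j}_{bl}$, obtained by inverting the averaged series), and taking $\gamma:=\tfrac12\min(\omega,\delta)$ will yield \eqref{devptasypol}.

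\emph{Step 4 (part (2), improved rate).} The upgraded hypothesis $E_{\lambda^0}\subset H^3(\Omega)\cap C^2(\overline{\Omega})$ removes the corner-induced loss in Step 2: the Riemann--Lebesgue gain upgrades to $O(\varepsilon)$, and the energy estimate for the ansatz residual can be pushed up by a full half power, so that every term in the bookkeeping is bounded by $\varepsilon^{3/2}$. I expect the main obstacle throughout to be the absence of a polygonal analogue of Theorem \ref{theoavlinsys}, so that the uniform $L^2(\Omega)$-control of $u^{1,\varepsilon,j}_{bl}$ cannot be obtained by a soft argument; working around this requires the explicit Poisson-type representation and edge-by-edge analysis of \cite{dgvnm}, which is exactly where the small-divisors slopes hypothesis enters essentially, and where the higher regularity of Step 4 pays off to control the corner contributions.
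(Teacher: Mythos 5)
Your proposal is correct and follows essentially the same route as the paper: the Osborn-type quadratic stability of the averaged inverse eigenvalues, the two-scale ansatz with the boundary-layer corrector whose convergence (with algebraic rate) is supplied by the small-divisors results of G\'erard-Varet and Masmoudi, the $H^{2+\omega}$ regularity from Theorem \ref{theoregdaugekmr} to make the expansion licit and to kill the oscillating $\chi^\gamma$ term by an integration by parts against a periodic potential, and the upgraded $H^3\cap C^2$ hypothesis to reach the $\varepsilon^{3/2}$ rate. The only point worth flagging is that the uniform $L^2$ bound on the boundary-layer corrector, needed to make the Osborn remainder $O(\varepsilon^2)$, is obtained in the paper as a byproduct of the quantitative convergence theorems for the polygonal boundary layer rather than by any soft a priori estimate, exactly as you anticipate at the end of Step 4.
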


We stress that theorem \ref{theoasypol} has two parts. The first point is a general result: due to the assumptions on $A$ (in particular {\bf (A1)} and {\bf (A4)}) and on $\Omega$ (polygonal and convex), the $H^{2+\omega}(\Omega)$ regularity, with $0<\omega$, needed on the eigenvectors for the proof, happens to be automatically fulfilled. The second part of the theorem states an optimal result in view of our proof, in terms of convergence rate, but needs to assume more regularity on the eigenfunctions.

There is a analog of theorem \ref{theoasypol} in the case of a convex polygon with sides of rational slopes, which improves theorem \ref{theomoscovog}. Estimate \eqref{devptasypol} (resp. \eqref{devptasypolreg}) still holds however up to the extraction of a subsequence $(\varepsilon_n)$. Throughout the paper, we indicate how to adapt the proofs to this case. 

When $\lambda^0$ is simple, \eqref{devptasylisse}, \eqref{devptasypol} and \eqref{devptasypolreg} yield the first-order expansion
\begin{equation*}
\lambda^{\varepsilon}=\lambda^0+\varepsilon\lambda^0\int_{\Omega}\vartheta_{bl}^*(x)v^0(x)dx+o\bigl(\varepsilon^{1+\gamma}\bigr).
\end{equation*}
valid for appropriate exponents $\gamma$.

A consequence of these two theorems \ref{theoasylisse} and \ref{theoasypol} is that the first-order correction to the eigenvalue $\lambda^0$ is identified and unique. Furthermore, it appears in the course of the proof of theorem \ref{theoasypol} that $\vartheta_{j,bl}^*$ is a solution of an homogenized elliptic boundary value problem (see \eqref{sysoscbordhomdiv}), whose data can be made explicit. It thus opens the door to numerical computations.

\subsection{Organization of the paper}
In section \ref{secsomerrorest}, we prove some corrector results for $u^\varepsilon$ solution of \eqref{sysoscbis}. Such estimates do exist in the litterature, but we focus on minimal regularity assumptions. In particular, we extend the bounds of \cite{moscovog} to elliptic systems (non necessarily symmetric) and get new ones, which are useful in the rest of the paper. Section \ref{sechomblsys} is devoted to the homogenization of boundary layer type systems. We analyse the convergence in $L^2(\Omega)$ of $\vartheta^\varepsilon_{v,bl}$ solution of \eqref{sysoscbord} with $\varphi(x,y):=-\chi^\alpha(y)\partial_{x_\alpha}v^0(x)$, in the two different settings: $\Omega$ is a smooth domain with uniformly convex boundary or a convex polygonal domain with the additional diophantine condition on the slopes. This work is the central step in the proof of theorems \ref{theoasylisse} and \ref{theoasypol}. 
The final step is done in section \ref{secasy}, where a first-order correction formula for $\lambda^\varepsilon$, in terms of the limit of $\vartheta^\varepsilon_{v,bl}$ when $\varepsilon\rightarrow 0$, is obtained.

\selectlanguage{english}


\section{Some error estimates}
\label{secsomerrorest}
Let $f\in H^{-1}(\Omega)$ and $\varphi_0\in H^\frac{1}{2}(\partial\Omega)$. The solution $u^\varepsilon$ of 
\begin{equation}\label{sysosc}
\left\{
\begin{array}{rll}
-\nabla \cdot A\bigl(\frac{x}{\varepsilon}\bigr)\nabla u^\varepsilon=&f,& x\in \Omega\\
u^\varepsilon=&\varphi_0,& x\in \partial \Omega
\end{array}
\right.
\end{equation}
exists, is unique in $H^1(\Omega)$ and converges strongly in $L^2(\Omega)$ towards $u^0\in H^1(\Omega)$ solving the elliptic system
\begin{equation}\label{sysu0}
\left\{
\begin{array}{rll}
-\nabla \cdot A^0\nabla u^0=&f,& x\in \Omega\\
u^0=&\varphi_0,& x\in \partial \Omega
\end{array}
\right. .
\end{equation}
We focus here on estimates in norm showing how fast this convergence takes place. We do not need assumption {\bf (A4)}, i.e. the symmetry of $A$.

\subsection{Multiscale expansions}\label{secmultscal}Before coming to the estimates, let us recall some basic facts about multiscale expansions. In the same fashion as $v^\varepsilon$ (see \eqref{dvptveps}), we expand $u^\varepsilon$:
\begin{equation}\label{dvptueps}
u^\varepsilon(x)\approx u^0\Bigl(x,\frac{x}{\varepsilon}\Bigr)+\varepsilon u^1\Bigl(x,\frac{x}{\varepsilon}\Bigr)+\varepsilon^2u^2\Bigl(x,\frac{x}{\varepsilon}\Bigr)+\ldots
\end{equation}
Plugging \eqref{dvptueps} in \eqref{sysosc} and identifying the powers of $\varepsilon$ yields, at least formally, 
\begin{enumerate}
\item that $u^0$ solves the homogenized system 
\begin{equation*}
\left\{
\begin{array}{rll}
-\nabla \cdot A^0\nabla u^0=&f,& x\in \Omega\\
u^0=&\varphi_0,& x\in \partial \Omega
\end{array}
\right. ,
\end{equation*}
\item that $u^1=u^1(x,y):=\chi^\alpha(y)\partial_{x_\alpha}u^0(x)+\bar{u}^1(x)$ where $\chi^\alpha$ is the function defined in \eqref{eqdefchi},
\item and that $u^2=u^2(x,y):=\Gamma^{\alpha\beta}(y)\partial_{x_\alpha}\partial_{x_\beta}u^0(x)+\chi^\alpha(y)\partial_{x_\alpha}\bar{u}^1(x)+\bar{u}^2(x)$ where $\Gamma^{\alpha\beta}$ solves
\begin{equation*}
-\nabla_y \cdot A(y)\nabla_y \Gamma^{\alpha\beta}=B^{\alpha\beta}-\int_{\mathbb T^2}B^{\alpha\beta}(y)dy,\ y\in \mathbb T^2\qquad \mbox{and}\qquad \int_{\mathbb T^2}\chi^\gamma(y)dy=0
\end{equation*}
with
\begin{equation*}
B^{\alpha\beta}:=A^{\alpha\beta}+A^{\alpha\gamma}\partial_{y_\gamma}\chi^\beta+\partial_{y_\gamma}\bigl(A^{\gamma\alpha}\chi^\beta\bigr).
\end{equation*}
\end{enumerate}
\emph{We always assume that $\bar{u}^1=\bar{u}^2=0$.}

The first-order correction $u^1\left(\cdot,\frac{\cdot}{\varepsilon}\right)$ to $u^\varepsilon$ does not satisfy homogeneous Dirichlet boundary conditions on $\partial\Omega$. It is therefore responsible for a $O\left(\frac{1}{\sqrt{\varepsilon}}\right)$ term in the $H^1(\Omega)$ estimates involving $u^1$. In order to correct this, one introduces a boundary layer function $\vartheta^\varepsilon_{u,bl}$ solution of \eqref{sysoscbord} with $\varphi(x,y):=-u^1(x,y)=-\chi^\alpha(y)\partial_{x_\alpha}u^0(x)$. Note that $u^1\left(\cdot,\frac{\cdot}{\varepsilon}\right)+\vartheta^\varepsilon_{u,bl}$ belongs to $H^1_0(\Omega)$.

\subsection{Error estimates}
We extend here the estimates of Moskow and Vogelius (cf. \cite{moscovog} section $2$) to systems. 
\begin{prop}\label{theoenestH1}
Assume that $u^0\in H^2(\Omega)$.\\
Then
\begin{equation}\label{enestH1ineq}
\begin{Vmatrix}
u^\varepsilon(x)-u^0(x)-\varepsilon u^1\bigl(x,\frac{x}{\varepsilon}\bigr)-\varepsilon\vartheta_{u,bl}^\varepsilon(x)
\end{Vmatrix}_{H^1(\Omega)}\leq C\varepsilon
\begin{Vmatrix}
u^0
\end{Vmatrix}_{H^2(\Omega)}
\end{equation}
for $C>0$ independent of $\varepsilon$ and $u^0$.
\end{prop}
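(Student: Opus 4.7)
The plan is to set $r^\varepsilon := u^\varepsilon - u^0 - \varepsilon u^1(\cdot,\cdot/\varepsilon) - \varepsilon\vartheta^\varepsilon_{u,bl}$ and show that
\begin{equation*}
-\nabla\cdot A(\cdot/\varepsilon)\nabla r^\varepsilon = \varepsilon\,\nabla\cdot F^\varepsilon,\qquad \|F^\varepsilon\|_{L^2(\Omega)}\le C\|u^0\|_{H^2(\Omega)}.
\end{equation*}
By the very choice of boundary data for $\vartheta^\varepsilon_{u,bl}$, namely $-u^1(x,x/\varepsilon)$ on $\partial\Omega$, the function $r^\varepsilon$ lies in $H^1_0(\Omega)$. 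Once the divergence-form equation above is established, testing it against $r^\varepsilon$, invoking the ellipticity assumption \textbf{(A1)} and the Poincar\'e inequality yields \eqref{enestH1ineq} at once. The whole difficulty is therefore to identify $F^\varepsilon$.

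To do so, I would introduce the shorthand $\widetilde A^{\alpha\gamma}(y) := A^{\alpha\gamma}(y)+A^{\alpha\beta}(y)\partial_{y_\beta}\chi^\gamma(y)$, so that the cell problem \eqref{eqdefchi} reads $\partial_{y_\alpha}\widetilde A^{\alpha\gamma}=0$ while $A^{0,\alpha\gamma}=\int_{\mathbb T^2}\widetilde A^{\alpha\gamma}$. A direct chain-rule expansion, in which the would-be $\varepsilon^{-1}$ contribution $(\partial_{y_\alpha}\widetilde A^{\alpha\gamma})(\cdot/\varepsilon)\partial_{x_\gamma}u^0$ vanishes by the cell problem, yields
\begin{equation*}
-\nabla\cdot A(\cdot/\varepsilon)\nabla\bigl[u^0+\varepsilon u^1(\cdot,\cdot/\varepsilon)\bigr] = -\widetilde A^{\alpha\gamma}(\cdot/\varepsilon)\partial_{x_\alpha}\partial_{x_\gamma}u^0 - \varepsilon\,\partial_{x_\alpha}\bigl[A^{\alpha\beta}(\cdot/\varepsilon)\chi^\gamma(\cdot/\varepsilon)\partial_{x_\beta}\partial_{x_\gamma}u^0\bigr].
\end{equation*}
Subtracting $f = -A^{0,\alpha\gamma}\partial_{x_\alpha}\partial_{x_\gamma}u^0 = -\nabla\cdot A(\cdot/\varepsilon)\nabla u^\varepsilon$ and recalling that $\vartheta^\varepsilon_{u,bl}$ is $A(\cdot/\varepsilon)$-harmonic gives
\begin{equation*}
-\nabla\cdot A(\cdot/\varepsilon)\nabla r^\varepsilon = \bigl[A^{0,\alpha\gamma}-\widetilde A^{\alpha\gamma}(\cdot/\varepsilon)\bigr]\partial_{x_\alpha}\partial_{x_\gamma}u^0 - \varepsilon\,\partial_{x_\alpha}\bigl[A^{\alpha\beta}(\cdot/\varepsilon)\chi^\gamma(\cdot/\varepsilon)\partial_{x_\beta}\partial_{x_\gamma}u^0\bigr].
\end{equation*}
The second summand is already an $\varepsilon$-divergence of an $L^2$-field bounded by $\|u^0\|_{H^2}$; the first summand is $O(1)$ in $\varepsilon$ and is the crux.

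To absorb this $O(1)$ residue I would invoke the classical antisymmetric flux corrector. Since $\widetilde A^{\alpha\gamma}-A^{0,\alpha\gamma}$ has zero $y$-mean and is divergence-free in $y$, solving $\Delta_y\psi^{\alpha\gamma}=\widetilde A^{\alpha\gamma}-A^{0,\alpha\gamma}$ on $\mathbb T^2$ with zero mean and setting $M^{\beta\alpha\gamma}:=\partial_{y_\beta}\psi^{\alpha\gamma}-\partial_{y_\alpha}\psi^{\beta\gamma}$ produces a smooth periodic family, antisymmetric in $(\beta,\alpha)$, satisfying $\partial_{y_\beta}M^{\beta\alpha\gamma}=\widetilde A^{\alpha\gamma}-A^{0,\alpha\gamma}$ (the auxiliary identity $\partial_{y_\beta}\psi^{\beta\gamma}=0$ needed here follows because that function is harmonic and mean-free on $\mathbb T^2$). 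Consequently
\begin{equation*}
\bigl[A^{0,\alpha\gamma}-\widetilde A^{\alpha\gamma}(\cdot/\varepsilon)\bigr]\partial_{x_\alpha}\partial_{x_\gamma}u^0 = -\varepsilon\,\partial_{x_\beta}\bigl[M^{\beta\alpha\gamma}(\cdot/\varepsilon)\bigr]\,\partial_{x_\alpha}\partial_{x_\gamma}u^0,
\end{equation*}
and a Leibniz manipulation, combined with the antisymmetry of $M^{\beta\alpha\gamma}$ in $(\beta,\alpha)$ against the symmetry of the third derivatives of $u^0$ (rigorous in the distributional sense via mollification of $u^0$, since only $H^2$ regularity is at hand), rewrites the right-hand side as the pure divergence $-\varepsilon\,\partial_{x_\beta}\bigl[M^{\beta\alpha\gamma}(\cdot/\varepsilon)\partial_{x_\alpha}\partial_{x_\gamma}u^0\bigr]$, whose argument is $L^2$-bounded by $C\|u^0\|_{H^2}$.

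The main technical hurdle is precisely the construction and use of the antisymmetric flux corrector $M$: without it one cannot turn the $O(1)$ residue into an $O(\varepsilon)$ divergence and the estimate breaks down at leading order. Everything else amounts to chain-rule bookkeeping and the standard Lax--Milgram/Poincar\'e energy machinery, the boundary layer $\vartheta^\varepsilon_{u,bl}$ entering only through the two facts that it is $A(\cdot/\varepsilon)$-harmonic and that it cancels the non-vanishing trace of $u^1(\cdot,\cdot/\varepsilon)$ on $\partial\Omega$.
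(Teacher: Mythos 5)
Your proof is correct and follows essentially the same route as the paper: same error function, same rewriting of the residual source as $\varepsilon$ times a divergence of $L^2$ fields controlled by $\|u^0\|_{H^2(\Omega)}$, same energy estimate using that the boundary layer makes the error vanish on $\partial\Omega$. The only (cosmetic) difference is that you package the key step as the dimension-independent antisymmetric flux corrector $M^{\beta\alpha\gamma}$, whereas the paper uses its two-dimensional incarnation, the stream-function Lemma \ref{lemmedivrotd2}: it writes the divergence-free, mean-free field $\widetilde A^{\alpha\gamma}-A^{0,\alpha\gamma}$ (there appearing as $v-A^0\nabla u^0$) as $\nabla^\perp_y\psi$ and then exploits $\nabla_x\cdot\nabla_x^\perp\psi=0$, which is exactly your Leibniz-plus-antisymmetry cancellation.
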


The proof relies on energy estimates on the error 
\begin{equation*}
e^\varepsilon:=u^\varepsilon(x)-u^0(x)-\varepsilon u^1\bigl(x,\frac{x}{\varepsilon}\bigr)-\varepsilon\vartheta_{u,bl}^\varepsilon(x).
\end{equation*}
It is a solution of the following system
\begin{equation}\label{syseeps}
\left\{
\begin{array}{rll}
-\nabla \cdot A\bigl(\frac{x}{\varepsilon}\bigr)\nabla e^\varepsilon=&r^\varepsilon,& x\in\Omega\\
e^\varepsilon=&0,& x\in\partial\Omega
\end{array}
\right.
\end{equation}
where 
\begin{equation}\label{exprreps}
r^\varepsilon:=f+\nabla\cdot\biggl[A\Bigl(\frac{x}{\varepsilon}\Bigr)\nabla u^0\biggr]+\varepsilon\nabla\cdot\biggl[A\Bigl(\frac{x}{\varepsilon}\Bigr)\nabla u^1\Bigl(x,\frac{x}{\varepsilon}\Bigr)\biggr].
\end{equation}
We intend to prove that the $H^1(\Omega)$ norm of $e^\varepsilon$ is of order $\varepsilon$ by showing that the source term $r^\varepsilon$ in \eqref{syseeps} is of order $\varepsilon$ in $H^{-1}(\Omega)$. It is not clear, looking at \eqref{exprreps}, that the latter is true. To face this issue, we invoke the classic key lemma, which can be proven using Fourier series expansions:
\begin{lem}\label{lemmedivrotd2}
Let $v=\begin{pmatrix}v_1\\v_2\end{pmatrix}\in C^\infty(\mathbb T^2;\mathbb R^2)$.\\
Assume 
\begin{equation*}
\nabla\cdot v=0\qquad \text{and}\qquad \int_{\mathbb T^2}v=0.
\end{equation*}
Then there exists $\psi=\psi(y)\in\mathbb R$ such that $v=\nabla^\perp\psi=\begin{pmatrix}-\partial_2\psi\\ \partial_1\psi\end{pmatrix}$.
\end{lem}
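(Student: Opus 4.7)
The plan is to pass to the Fourier series on the torus and solve for $\psi$ mode by mode. Write
\begin{equation*}
v(y)=\sum_{k\in\mathbb Z^2}\hat v(k)e^{2\pi i k\cdot y},\qquad \hat v(k)\in\mathbb C^2.
\end{equation*}
The zero-mean assumption says $\hat v(0)=0$, and the divergence-free condition translates into $k\cdot\hat v(k)=0$ for every $k\in\mathbb Z^2$. Hence for each $k\neq 0$, $\hat v(k)$ is orthogonal to $k$ and therefore collinear with $k^\perp:=(-k_2,k_1)$.

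If a scalar $\psi$ satisfies $v=\nabla^\perp\psi$, its Fourier coefficients must obey $\hat v(k)=2\pi i\,\hat\psi(k)\,k^\perp$. This dictates the definition
\begin{equation*}
\hat\psi(k):=\frac{\hat v(k)\cdot k^\perp}{2\pi i\,|k|^2}\quad(k\neq 0),\qquad \hat\psi(0):=0,
\end{equation*}
which would then give the candidate
\begin{equation*}
\psi(y):=\sum_{k\neq 0}\hat\psi(k)e^{2\pi i k\cdot y}.
\end{equation*}
I would verify three things. First, convergence and smoothness: since $v\in C^\infty(\mathbb T^2)$, the coefficients $\hat v(k)$ are rapidly decreasing in $|k|$, and multiplying by $|k|^{-1}$ preserves rapid decrease, so $\psi\in C^\infty(\mathbb T^2)$. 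Second, reality: $v$ being real means $\overline{\hat v(k)}=\hat v(-k)$, and since $(-k)^\perp=-k^\perp$, the same symmetry holds for $\hat\psi$, giving $\psi$ real-valued. Third, the identity $v=\nabla^\perp\psi$: the $k$-th Fourier coefficient of $\nabla^\perp\psi$ equals
\begin{equation*}
2\pi i\,\hat\psi(k)\,k^\perp=\frac{(\hat v(k)\cdot k^\perp)}{|k|^2}\,k^\perp,
\end{equation*}
and since $\hat v(k)$ is parallel to $k^\perp$ with $|k^\perp|^2=|k|^2$, this equals $\hat v(k)$, so the two functions agree coefficientwise and hence everywhere.

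There is no genuine obstacle here; the argument is a pure Fourier-series manipulation made possible by the fact that on $\mathbb T^2$ the orthogonal complement of $k$ inside $\mathbb R^2$ is one-dimensional, spanned by $k^\perp$. The only point of care is the bookkeeping ensuring smoothness and reality of $\psi$, which, as noted, follows directly from the corresponding properties of $\hat v$.
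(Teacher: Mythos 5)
Your proof is correct and follows exactly the route the paper indicates: the paper gives no detailed argument for this lemma, merely noting that it "can be proven using Fourier series expansions," which is precisely what you carry out. The mode-by-mode construction of $\hat\psi(k)$, together with the checks of rapid decay and the reality symmetry $\overline{\hat v(k)}=\hat v(-k)$, is complete and accurate.
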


\begin{proof}[Proof of proposition \ref{theoenestH1}]
Expanding the source term $r^\varepsilon$ yields 
\begin{align}\label{repsdevpt}
r^\varepsilon=&\frac{1}{\varepsilon}\biggl[\Bigl[\nabla_y\cdot A(y)\nabla_yu^1\Bigr]\Bigl(x,\frac{x}{\varepsilon}\Bigr)+\Bigl[\nabla_y\cdot A(y)\nabla_xu^0\Bigr]\Bigl(x,\frac{x}{\varepsilon}\Bigr)\biggr]\\
&\:+f+\Bigl[\nabla_x\cdot A(y)\nabla_xu^0\Bigr]\Bigl(x,\frac{x}{\varepsilon}\Bigr)+\Bigl[\nabla_x\cdot A(y)\nabla_yu^1\Bigr]\Bigl(x,\frac{x}{\varepsilon}\Bigr)+\Bigl[\nabla_y\cdot A(y)\nabla_xu^1\Bigr]\Bigl(x,\frac{x}{\varepsilon}\Bigr)\nonumber\\
&\;\qquad+\varepsilon\Bigl[\nabla_x\cdot A(y)\nabla_xu^1\Bigr]\Bigl(x,\frac{x}{\varepsilon}\Bigr)\nonumber.
\end{align}
The leading idea is to get rid of terms of order $0$ and $-1$ in $\varepsilon$. We call
\begin{equation*}
v:=A(y)\nabla_yu^1+A(y)\nabla_xu^0
\end{equation*}
and notice that
\begin{align}
\nabla_y\cdot v&=\nabla_y\cdot A(y)\nabla_xu^0+\nabla_y\cdot A(y)\nabla_yu^1(x,y)\nonumber\\
&=\partial_{y_\alpha}A^{\alpha\beta}(y)\partial_{x_\beta}u^0+\partial{y_\alpha}\bigl(A^{\alpha\beta}(y)\partial_{y_\beta}\chi^\gamma(y)\bigr)\partial_{x_\gamma}u^0\nonumber\\
&=0\label{vdivy0}
\end{align}
because $\chi^\gamma$ solves \eqref{eqdefchi}.
Thus the $\varepsilon^{-1}$ order term in \eqref{repsdevpt} cancels and it remains to handle the zeroth order term:
\begin{multline}\label{zerothorderreps}
f+\Bigl[\nabla_x\cdot A(y)\nabla_xu^0\Bigr]\Bigl(x,\frac{x}{\varepsilon}\Bigr)+\Bigl[\nabla_x\cdot A(y)\nabla_yu^1\Bigr]\Bigl(x,\frac{x}{\varepsilon}\Bigr)+\Bigl[\nabla_y\cdot A(y)\nabla_xu^1\Bigr]\Bigl(x,\frac{x}{\varepsilon}\Bigr)\\
=f+\Bigl[\nabla_x\cdot v\Bigr]\Bigl(x,\frac{x}{\varepsilon}\Bigr)+\Bigl[\nabla_y\cdot A(y)\nabla_xu^1\Bigr]\Bigl(x,\frac{x}{\varepsilon}\Bigr).
\end{multline}
Here again, we take advantage of \eqref{vdivy0} and the definition of $A^0$: on the one hand
\begin{equation*}
\nabla_y\cdot\bigl(v-A^0\nabla u^0\bigr)=0
\end{equation*}
and on the other hand
\begin{equation*}
\int_{\mathbb T^2}\bigl(v-A^0\nabla u^0\bigr)=0.
\end{equation*}
It follows that one can apply lemma \ref{lemmedivrotd2}, component by component, and get a function $\psi=\psi(x,y)\in\mathbb R^N$ such that
\begin{equation*}
v-A^0\nabla u^0=\nabla^\perp_y\psi.
\end{equation*}
Due to the fact that $v-A^0\nabla u^0$ is a function of separated variables $x$ and $y$, 
$\psi$ itself is and factors into 
\begin{equation}\label{sepvarpsi}
\psi(x,y)=\Psi(y)\nabla u^0(x).
\end{equation}
The function $\Psi=\bigl(\Psi^\alpha(y)\bigr)_{1\leq\alpha\leq 2}\in {M_N(\mathbb R)}^2$ is given by the lemma and is therefore of class $C^\infty$. As $u^0$ is assumed to be in $H^2(\Omega)$, $\psi$ is in $H^1(\Omega)$ with respect to $x$. We set 
\begin{equation*}
w:=\nabla_x^\perp\psi
\end{equation*}
of regularity $L^2(\Omega)$ towards $x$, we compute
\begin{equation*}
\nabla_y\cdot w=\nabla_y\cdot\nabla_x^\perp\psi=-\nabla_x\cdot\nabla_y^\perp\psi=-\nabla_x\cdot v -f
\end{equation*}
and use this equality to simplify \eqref{zerothorderreps}
\begin{multline}
f+\Bigl[\nabla_x\cdot v\Bigr]\Bigl(x,\frac{x}{\varepsilon}\Bigr)+\Bigl[\nabla_y\cdot A(y)\nabla_xu^1\Bigr]\Bigl(x,\frac{x}{\varepsilon}\Bigr)=-\Bigl[\nabla_y\cdot w\Bigr]\Bigl(x,\frac{x}{\varepsilon}\Bigr)\\
+\Bigl[\nabla_y\cdot A(y)\nabla_xu^1\Bigr]\Bigl(x,\frac{x}{\varepsilon}\Bigr).
\end{multline}
Finally, using $\nabla_x\cdot w=0$ one obtains
\begin{align}\label{exprrepsmodif}
r^\varepsilon&=-\Bigl[\nabla_y\cdot w\Bigr]\Bigl(x,\frac{x}{\varepsilon}\Bigr)+\varepsilon\nabla\cdot\biggl[A\Bigl(\frac{x}{\varepsilon}\Bigr)\nabla_xu^1\Bigl(x,\frac{x}{\varepsilon}\Bigr)\biggr]\nonumber\\
&=-\Bigl[\nabla_y\cdot w\Bigr]\Bigl(x,\frac{x}{\varepsilon}\Bigr)-\varepsilon\Bigl[\nabla_x\cdot w\Bigr]\Bigl(x,\frac{x}{\varepsilon}\Bigr)+\varepsilon\nabla\cdot\biggl[A\Bigl(\frac{x}{\varepsilon}\Bigr)\nabla_xu^1\biggr]\Bigl(x,\frac{x}{\varepsilon}\Bigr)\nonumber\\
&=-\varepsilon\nabla\cdot\biggl[w\Bigl(x,\frac{x}{\varepsilon}\Bigr)\biggr]+\varepsilon\nabla\cdot\biggl[A\Bigl(\frac{x}{\varepsilon}\Bigr)\nabla_xu^1\Bigl(x,\frac{x}{\varepsilon}\Bigr)\biggr].
\end{align}

It remains to estimate the $H^{-1}(\Omega)$ norm of $r^\varepsilon$. The expression \eqref{exprrepsmodif} is convenient for two reasons: it is a sum of two terms of order $1$ in $\varepsilon$ and is written in divergence form. Moreover, $w\bigl(\cdot,\frac{\cdot}{\varepsilon}\bigr)$ as well as $A\bigl(\frac{\cdot}{\varepsilon}\bigr)\nabla_xu^1\bigl(\cdot,\frac{\cdot}{\varepsilon}\bigr)$ belong to $L^2(\Omega)$ and we have
\begin{align*}
\begin{Vmatrix}
w\bigl(\cdot,\frac{\cdot}{\varepsilon}\bigr)
\end{Vmatrix}_{L^2(\Omega)}&\leq C
\begin{Vmatrix}
u^0
\end{Vmatrix}_{H^2(\Omega)}\\
\begin{Vmatrix}
A\bigl(\frac{\cdot}{\varepsilon}\bigr)\nabla_xu^1\bigl(\cdot,\frac{\cdot}{\varepsilon}\bigr)
\end{Vmatrix}_{L^2(\Omega)}&\leq C
\begin{Vmatrix}
u^0
\end{Vmatrix}_{H^2(\Omega)}.
\end{align*}
Consequently, for all $\phi\in H^1_0(\Omega)$, by 
Cauchy-Schwarz inequality
\begin{align*}
&\left|\left\langle r^\varepsilon\Bigl(x,\frac{x}{\varepsilon}\Bigr),\phi(x)\right\rangle_{H^{-1}(\Omega),H^1_0(\Omega)}\right|\\
&\qquad\qquad=\left|\left\langle-\varepsilon\nabla\cdot\biggl[w\Bigl(x,\frac{x}{\varepsilon}\Bigr)\biggr]+\varepsilon\nabla\cdot\biggl[A\Bigl(\frac{x}{\varepsilon}\Bigr)\nabla_xu^1\Bigl(x,\frac{x}{\varepsilon}\Bigr)\biggr],\phi(x)\right\rangle_{H^{-1}(\Omega),H^1_0(\Omega)}\right|\\
&\qquad\qquad=\left|\varepsilon\int_{\Omega}w\Bigl(x,\frac{x}{\varepsilon}\Bigr)\cdot\nabla\phi(x)dx-\varepsilon\int_{\Omega}A\Bigl(\frac{x}{\varepsilon}\Bigr)\nabla_xu^1\Bigl(x,\frac{x}{\varepsilon}\Bigr)\cdot\nabla\phi(x)dx\right|\\
&\qquad\qquad\leq C\varepsilon
\begin{Vmatrix}
u^0
\end{Vmatrix}_{H^2(\Omega)}
\begin{Vmatrix}
\phi
\end{Vmatrix}_{H^{1}_0(\Omega)}
\end{align*}
which concludes the proof.
\end{proof}

\begin{cor}\label{coruepsu0L2}
Assume that $u^0\in H^2(\Omega)$.\\
Then 
\begin{equation}\label{inequepsu0L2}
\begin{Vmatrix}
u^\varepsilon(x)-u^0(x)
\end{Vmatrix}_{L^2(\Omega)}\leq C\varepsilon^\frac{1}{2}
\begin{Vmatrix}
u^0
\end{Vmatrix}_{H^2(\Omega)}.
\end{equation}
\end{cor}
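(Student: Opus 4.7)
The plan is to combine the $H^1$ error estimate of Proposition~\ref{theoenestH1} with a sharp $L^2$ bound on the boundary layer $\vartheta^\varepsilon_{u,bl}$. By the triangle inequality,
\[
\|u^\varepsilon-u^0\|_{L^2(\Omega)} \le \|u^\varepsilon-u^0-\varepsilon u^1(\cdot,\cdot/\varepsilon)-\varepsilon\vartheta^\varepsilon_{u,bl}\|_{H^1(\Omega)} + \varepsilon\|u^1(\cdot,\cdot/\varepsilon)\|_{L^2(\Omega)} + \varepsilon\|\vartheta^\varepsilon_{u,bl}\|_{L^2(\Omega)}.
\]
The first term is $O(\varepsilon\|u^0\|_{H^2})$ by \eqref{enestH1ineq}; the second is also $O(\varepsilon\|u^0\|_{H^2})$ since $\chi \in L^\infty(\mathbb T^2)$ and $u^1(x,y)=\chi^\alpha(y)\partial_{x_\alpha}u^0(x)$. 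Everything therefore reduces to showing $\|\vartheta^\varepsilon_{u,bl}\|_{L^2(\Omega)} \le C\varepsilon^{-1/2}\|u^0\|_{H^2}$, which combined with the $\varepsilon$ prefactor produces exactly the claimed $\varepsilon^{1/2}$ rate.

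To bound $\vartheta^\varepsilon_{u,bl}$ in $L^2$ without appealing to Avellaneda--Lin (whose regularity assumptions on $\Omega$ are not invoked in the corollary), I would construct an explicit $H^1$ lift of the oscillating Dirichlet trace, concentrated in a boundary strip of width $\varepsilon$. Let $\eta\in C^\infty([0,\infty))$ satisfy $\eta(0)=1$ and $\eta\equiv 0$ on $[1,\infty)$, set $\eta_\varepsilon(x):=\eta(\mathrm{dist}(x,\partial\Omega)/\varepsilon)$, and define
\[
U^\varepsilon(x) := -\chi^\alpha(x/\varepsilon)\,\partial_{x_\alpha}u^0(x)\,\eta_\varepsilon(x).
\]
Then $U^\varepsilon = \vartheta^\varepsilon_{u,bl}$ on $\partial\Omega$, so $w^\varepsilon:=\vartheta^\varepsilon_{u,bl}-U^\varepsilon \in H^1_0(\Omega)$ solves the divergence-form problem with right-hand side $\nabla\!\cdot\! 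A(\cdot/\varepsilon)\nabla U^\varepsilon$. By ellipticity {\bf (A1)} and Lax--Milgram, $\|\nabla w^\varepsilon\|_{L^2}\le C\|\nabla U^\varepsilon\|_{L^2}$, and Poincar\'e upgrades this to an $L^2$ bound on $w^\varepsilon$. The $L^2$ norm of $U^\varepsilon$ itself is harmless ($\le C\|\nabla u^0\|_{L^2}$ by boundedness of $\chi$), so the entire argument rests on controlling $\|\nabla U^\varepsilon\|_{L^2(\Omega)}$.

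The main obstacle is precisely this gradient bound $\|\nabla U^\varepsilon\|_{L^2(\Omega)} \le C\varepsilon^{-1/2}\|u^0\|_{H^2}$. Differentiating $U^\varepsilon$ produces three kinds of terms, all supported in the strip $\Omega_\varepsilon:=\{\mathrm{dist}(\cdot,\partial\Omega)<\varepsilon\}$: a term $\varepsilon^{-1}(\nabla_y\chi^\alpha)(x/\varepsilon)\,\partial_{x_\alpha}u^0\,\eta_\varepsilon$, a term $\chi^\alpha(x/\varepsilon)\,\partial_{x_\alpha}u^0\,\nabla\eta_\varepsilon$ with $|\nabla\eta_\varepsilon|\lesssim\varepsilon^{-1}$, and a term $\chi^\alpha(x/\varepsilon)\,\nabla\partial_{x_\alpha}u^0\,\eta_\varepsilon$. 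The diverging $\varepsilon^{-1}$ factors in the first two will be absorbed using the standard $L^2$ layer estimate
\[
\|f\|_{L^2(\Omega_\varepsilon)} \le C\varepsilon^{1/2}\|f\|_{H^1(\Omega)} \qquad (f\in H^1(\Omega)),
\]
applied to $f=\nabla u^0 \in H^1(\Omega)$; the third term is trivially $O(\|u^0\|_{H^2})$. This layer estimate itself is obtained from the trace theorem by integrating in normal coordinates along $\partial\Omega$, and requires only Lipschitz regularity of $\partial\Omega$, which is automatic in the polygonal and smooth settings of Theorems~\ref{theoasylisse} and~\ref{theoasypol}. Once this gradient bound is in hand, chaining Lax--Milgram, Poincar\'e and the triangle inequality closes the proof.
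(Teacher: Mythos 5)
Your proof is correct, and its skeleton coincides with the paper's: the same triangle inequality splitting off the $H^1$ corrector error of proposition \ref{theoenestH1}, the same trivial bound on $\varepsilon u^1$, and the same reduction to the single estimate $\lVert\vartheta^\varepsilon_{u,bl}\rVert_{L^2(\Omega)}\leq C\varepsilon^{-\frac{1}{2}}\lVert u^0\rVert_{H^2(\Omega)}$. The only divergence is in how that last bound is justified. The paper simply quotes the chain \eqref{badestthetablepsmajH1/2}: the a priori energy estimate for \eqref{sysvartheta} against the $H^{\frac{1}{2}}(\partial\Omega)$ norm of the oscillating trace, together with the fact that $\lVert\chi^\alpha(\cdot/\varepsilon)\partial_{x_\alpha}u^0\rVert_{H^{\frac{1}{2}}(\partial\Omega)}=O(\varepsilon^{-\frac{1}{2}})\lVert u^0\rVert_{H^2(\Omega)}$, i.e. it works with fractional trace norms. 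You instead rederive the $\varepsilon^{-\frac{1}{2}}$ blow-up constructively: an explicit lifting $U^\varepsilon$ cut off in an $\varepsilon$-strip, Lax--Milgram with the coercivity of \textbf{(A1)} plus Poincar\'e for the remainder, and the thin-strip estimate $\lVert f\rVert_{L^2(\Omega_\varepsilon)}\leq C\varepsilon^{\frac{1}{2}}\lVert f\rVert_{H^1(\Omega)}$ applied to $f=\nabla u^0$ to absorb the two $\varepsilon^{-1}$ terms in $\nabla U^\varepsilon$. This is essentially the standard proof hiding behind the paper's one-line citation, so nothing is gained in generality, but your version is self-contained, avoids $H^{\frac{1}{2}}$ interpolation norms entirely, and makes explicit that only Lipschitz regularity of $\partial\Omega$ is used, which is indeed all that the polygonal setting provides; the paper's formulation is shorter because it delegates these points to known trace and energy estimates.
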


\begin{proof}
By the triangular inequality, we get
\begin{multline*}
\begin{Vmatrix}
u^\varepsilon(x)-u^0(x)
\end{Vmatrix}_{L^2(\Omega)}\\
\leq
\begin{Vmatrix}
u^\varepsilon(x)-u^0(x)-\varepsilon u^1\bigl(x,\frac{x}{\varepsilon}\bigr)-\varepsilon\vartheta_{u,bl}^\varepsilon(x)
\end{Vmatrix}_{H^1(\Omega)}
+\varepsilon
\begin{Vmatrix}
u^1\bigl(x,\frac{x}{\varepsilon}\bigr)
\end{Vmatrix}_{L^2(\Omega)}
+\varepsilon
\begin{Vmatrix}
\vartheta_{u,bl}^\varepsilon(x)
\end{Vmatrix}_{L^2(\Omega)}.
\end{multline*}
The estimate \eqref{inequepsu0L2} now follows from \eqref{enestH1ineq}, the uniform boundedness of $u^1\bigl(\cdot,\frac{\cdot}{\varepsilon}\bigr)$ in $L^2(\Omega)$ and from the bound 
\begin{equation}\label{badestthetablepsmajH1/2}
\begin{Vmatrix}
\vartheta_{u,bl}^{\varepsilon}
\end{Vmatrix}_{L^2(\Omega)}
\leq
\begin{Vmatrix}
\vartheta_{u,bl}^{\varepsilon}
\end{Vmatrix}_{H^1(\Omega)}
\leq C
\begin{Vmatrix}
\chi^\alpha\bigl(\frac{x}{\varepsilon}\bigr)\partial_{x_\alpha}u^0(x)
\end{Vmatrix}_{H^\frac{1}{2}(\partial\Omega)}
\leq C\varepsilon^{-\frac{1}{2}}
\begin{Vmatrix}
u^0
\end{Vmatrix}_{H^2(\Omega)}.\qedhere
\end{equation}
\end{proof}

From corollary \ref{coruepsu0L2}, one easily gets a similar $L^2(\Omega)$ estimate under weaker assumptions on $u^0$.

\begin{cor}\label{corestuepsu0H1omega}
Assume that $u^0\in H^{1+\omega}(\Omega)$, with $0\leq\omega\leq 1$.\\
Then 
\begin{equation}\label{inequepsu0L2bis}
\begin{Vmatrix}
u^\varepsilon(x)-u^0(x)
\end{Vmatrix}_{L^2(\Omega)}\leq C\varepsilon^\frac{\omega}{2}
\begin{Vmatrix}
u^0
\end{Vmatrix}_{H^{1+\omega}(\Omega)}.
\end{equation}
\end{cor}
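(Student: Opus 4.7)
The plan is to recover this weaker-regularity statement from the $\omega=1$ case (Corollary \ref{coruepsu0L2}) by interpolation, exploiting the linearity in $u^0$ of the map $u^0 \mapsto u^\varepsilon - u^0$.

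First, I would view $u^0$ as the datum and set $f := -\nabla\cdot A^0\nabla u^0 \in H^{\omega-1}(\Omega)$ together with $\varphi_0 := u^0|_{\partial\Omega} \in H^{\omega+\frac{1}{2}}(\partial\Omega)$, so that $u^\varepsilon$ is then defined by \eqref{sysosc}. This makes the operator
\[
S^\varepsilon: u^0 \longmapsto u^\varepsilon - u^0
\]
linear from some Sobolev space over $\Omega$ into $L^2(\Omega)$. Next I would establish two endpoint bounds. At the lower endpoint $\omega = 0$, the standard Lax-Milgram energy estimates for $T^\varepsilon$ and $T^0$ give $\|u^\varepsilon\|_{H^1(\Omega)} + \|u^0\|_{H^1(\Omega)} \le C(\|f\|_{H^{-1}(\Omega)} + \|\varphi_0\|_{H^{\frac{1}{2}}(\partial\Omega)}) \le C\|u^0\|_{H^1(\Omega)}$, with constants independent of $\varepsilon$ by the ellipticity assumption \textbf{(A1)}. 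Hence $\|S^\varepsilon u^0\|_{L^2(\Omega)} \le C\|u^0\|_{H^1(\Omega)}$. At the upper endpoint $\omega = 1$, Corollary \ref{coruepsu0L2} says exactly $\|S^\varepsilon u^0\|_{L^2(\Omega)} \le C\varepsilon^{\frac{1}{2}}\|u^0\|_{H^2(\Omega)}$.

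I would then invoke the real interpolation identity $\bigl(H^1(\Omega), H^2(\Omega)\bigr)_{\omega,2} = H^{1+\omega}(\Omega)$ for $0<\omega<1$ (valid on a Lipschitz domain) and apply it to the operator $S^\varepsilon$. This yields the operator norm bound
\[
\|S^\varepsilon\|_{H^{1+\omega}(\Omega) \to L^2(\Omega)} \le C^{1-\omega}\,(C\varepsilon^{\frac{1}{2}})^{\omega} \le C\,\varepsilon^{\frac{\omega}{2}},
\]
which is precisely \eqref{inequepsu0L2bis}. The cases $\omega=0$ and $\omega=1$ are already covered by the endpoint estimates above.

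I do not expect any real obstacle: the endpoint estimates are either trivial energy estimates or the preceding corollary, and the required interpolation identity is classical on Lipschitz domains. The only mild subtlety is to ensure that the constants on the two endpoints are genuinely independent of $\varepsilon$, which holds thanks to \textbf{(A1)} and to the fact that the bound in Corollary \ref{coruepsu0L2} is itself $\varepsilon$-explicit.
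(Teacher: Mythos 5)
Your proposal is correct and follows essentially the same route as the paper: an $\varepsilon$-uniform energy estimate giving boundedness of $u^0\mapsto u^\varepsilon-u^0$ from $H^1(\Omega)$ to $L^2(\Omega)$, the bound of Corollary \ref{coruepsu0L2} from $H^2(\Omega)$ to $L^2(\Omega)$, and interpolation between the two endpoints. The only difference is cosmetic — you make explicit the identification of the data $(f,\varphi_0)$ with $u^0$ and name the interpolation couple, both of which the paper leaves implicit.
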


\begin{proof}
A straightforward energy estimate on the elliptic system satisfied by $u^\varepsilon-u^0$ yields
\begin{equation}\label{inequepsu0L2ter}
\begin{Vmatrix}
u^\varepsilon(x)-u^0(x)
\end{Vmatrix}_{L^2(\Omega)}
\leq C
\begin{Vmatrix}
u^\varepsilon(x)-u^0(x)
\end{Vmatrix}_{H^1_0(\Omega)}\leq C
\begin{Vmatrix}
u^0
\end{Vmatrix}_{H^{1}(\Omega)}.
\end{equation}
Inequality \eqref{inequepsu0L2bis} now comes from interpolating \eqref{inequepsu0L2ter} and \eqref{inequepsu0L2}. The main idea is to think of \eqref{inequepsu0L2ter} (resp. \eqref{inequepsu0L2}) as the statement that the linear operator 
\begin{equation*}
u^0\longmapsto u^\varepsilon(x)-u^0(x)
\end{equation*}
is bounded from $H^1(\Omega)$ to $L^2(\Omega)$ (resp. from $H^2(\Omega)$ to $L^2(\Omega)$) and then to interpolate.
\end{proof}

We call now $\vartheta^{2,\varepsilon}_{u,bl}$ the solution of \eqref{sysoscbord} with $\varphi(x,y):=-u^2(x,y)$, whose introduction is motivated by the same reasons as $\vartheta^\varepsilon_{u,bl}$, 
and state the proposition:

\begin{prop}\label{theoenestL2}
Assume that $u^0\in H^3(\Omega)$.\\
Then
\begin{equation}\label{enestL2ineq}
\begin{Vmatrix}
u^\varepsilon(x)-u^0(x)-\varepsilon u^1\bigl(x,\frac{x}{\varepsilon}\bigr)-\varepsilon\vartheta_{u,bl}^\varepsilon(x)
\end{Vmatrix}_{L^2(\Omega)}\leq C\varepsilon^\frac{3}{2}
\begin{Vmatrix}
u^0
\end{Vmatrix}_{H^3(\Omega)}.
\end{equation}
\end{prop}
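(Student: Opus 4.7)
The plan is to push the two-scale ansatz one order further and then reduce the $L^2$ bound to an $H^1$ estimate on the resulting finer remainder, in the spirit of Proposition \ref{theoenestH1}. I would introduce
$$E^\varepsilon := u^\varepsilon - u^0 - \varepsilon u^1\bigl(x,\tfrac{x}{\varepsilon}\bigr) - \varepsilon\vartheta^{\varepsilon}_{u,bl} - \varepsilon^2 u^2\bigl(x,\tfrac{x}{\varepsilon}\bigr) - \varepsilon^2\vartheta^{2,\varepsilon}_{u,bl},$$
which by the very definition of the two boundary layer correctors lies in $H^1_0(\Omega)$ and satisfies $-\nabla\cdot A(\cdot/\varepsilon)\nabla E^\varepsilon = \rho^\varepsilon$ for a residual $\rho^\varepsilon$ to be estimated in $H^{-1}(\Omega)$.

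Expanding $\rho^\varepsilon$ in powers of $\varepsilon$, the $\varepsilon^{-2}$ and $\varepsilon^{-1}$ contributions cancel thanks to the first cell problem \eqref{eqdefchi}, and the $\varepsilon^{0}$ contribution cancels thanks to the $\Gamma^{\alpha\beta}$ cell problem recalled in Section \ref{secmultscal} (this is where $u^2$ earns its keep). What remains has the schematic form $\varepsilon R_1^\varepsilon + \varepsilon^2 R_2^\varepsilon$, where $R_2^\varepsilon$ is bounded in $L^2(\Omega)$ by $\|u^0\|_{H^3(\Omega)}$ and thus contributes $O(\varepsilon^2)$ to $\|\rho^\varepsilon\|_{H^{-1}}$, while $R_1^\varepsilon$ is the analogue, at the next order, of the apparently non-small term already handled in Proposition \ref{theoenestH1}. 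I would apply Lemma \ref{lemmedivrotd2} a second time to the vector field $A(y)(\nabla_x u^1 + \nabla_y u^2)$: once a suitable constant-in-$y$ flux dictated by the $\Gamma^{\alpha\beta}$ equation is subtracted, the resulting field is $y$-divergence free and of zero $y$-mean on $\mathbb T^2$, hence writes as $\nabla_y^\perp$ of an auxiliary periodic tensor $\widetilde\Psi(y)$ contracted with $\nabla^2 u^0(x)$. This manipulation allows $R_1^\varepsilon$ to be re-expressed in pure $x$-divergence form $-\varepsilon\,\nabla\cdot \widetilde W^\varepsilon$ with $\|\widetilde W^\varepsilon\|_{L^2(\Omega)}\leq C\|u^0\|_{H^3(\Omega)}$, so that $\varepsilon R_1^\varepsilon = O(\varepsilon^2)$ in $H^{-1}(\Omega)$.

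With $\|\rho^\varepsilon\|_{H^{-1}(\Omega)}\leq C\varepsilon^2\|u^0\|_{H^3(\Omega)}$ in hand, Lax--Milgram provides $\|E^\varepsilon\|_{H^1_0(\Omega)}\leq C\varepsilon^2\|u^0\|_{H^3(\Omega)}$, and in particular the same bound in $L^2(\Omega)$. A triangle inequality then gives
$$\bigl\|u^\varepsilon - u^0 - \varepsilon u^1(\cdot,\cdot/\varepsilon) - \varepsilon\vartheta^{\varepsilon}_{u,bl}\bigr\|_{L^2(\Omega)} \leq \|E^\varepsilon\|_{L^2(\Omega)} + \varepsilon^2\|u^2(\cdot,\cdot/\varepsilon)\|_{L^2(\Omega)} + \varepsilon^2\|\vartheta^{2,\varepsilon}_{u,bl}\|_{L^2(\Omega)}.$$
The first two summands on the right are $O(\varepsilon^2)$, and for the third I would repeat the argument of \eqref{badestthetablepsmajH1/2} with boundary data $\Gamma^{\alpha\beta}(x/\varepsilon)\partial_{x_\alpha}\partial_{x_\beta}u^0(x)$: the oscillations in $x/\varepsilon$ cost a factor $\varepsilon^{-1/2}$ in $H^{1/2}(\partial\Omega)$, yielding $\|\vartheta^{2,\varepsilon}_{u,bl}\|_{L^2(\Omega)}\leq C\varepsilon^{-1/2}\|u^0\|_{H^3(\Omega)}$. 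This last term thus contributes $O(\varepsilon^{3/2})$ and is precisely what fixes the announced rate.

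The main technical obstacle is the second application of Lemma \ref{lemmedivrotd2}: one has to verify that, after subtracting the right $x$-dependent but $y$-constant flux (an expression involving $A^0$, $\chi$ and $\Gamma$), the combination $A(y)(\nabla_x u^1 + \nabla_y u^2)$ simultaneously has vanishing $y$-divergence and vanishing $y$-mean on $\mathbb T^2$, so that the lemma can be applied component by component. This is also where the upgrade from $H^2$ to $H^3$ regularity on $u^0$ gets spent, since $\widetilde W^\varepsilon$ ultimately involves third-order $x$-derivatives of $u^0$ once rewritten in divergence form.
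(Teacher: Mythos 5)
Your proposal follows essentially the same route as the paper: push the expansion to include $\varepsilon^2 u^2$ and $\varepsilon^2\vartheta^{2,\varepsilon}_{u,bl}$, obtain an $O(\varepsilon^2)$ bound in $H^1(\Omega)$ on the resulting remainder, and recover the $\varepsilon^{3/2}$ rate from the triangle inequality via the $\varepsilon^{-1/2}$ cost of the second boundary layer in $H^{1/2}(\partial\Omega)$. The only difference is that the paper simply quotes the global $O(\varepsilon^2)$ energy estimate from \cite{dgvnm} (section $3.3$), whereas you sketch its proof; in that sketch note that the field $A(y)(\nabla_x u^1+\nabla_y u^2)$ is not made $y$-divergence free by subtracting a $y$-constant flux alone --- one must also absorb the $y$-dependent field $\nabla_x^\perp\psi$ produced at the previous order --- which is precisely the verification you flag as the technical obstacle.
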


\begin{proof}
The proof of \eqref{enestL2ineq} relies on a global energy estimate found in \cite{dgvnm}, section $3.3$:
\begin{equation}\label{globalenestdgvnm}
\begin{Vmatrix}
u^\varepsilon(x)-u^0(x)-\varepsilon u^1\bigl(x,\frac{x}{\varepsilon}\bigr)-\varepsilon\vartheta_{u,bl}^\varepsilon(x)-\varepsilon^2u^2\bigl(x,\frac{x}{\varepsilon}\bigr)-\varepsilon^2\vartheta_{u,bl}^{2,\varepsilon}(x)
\end{Vmatrix}_{H^1(\Omega)}=O\bigl(\varepsilon^2\bigr)
\end{equation}
It requires $u^0\in H^3(\Omega)$ and it can be showned using the same ideas than those involved in \eqref{enestH1ineq}, the key being again lemma \ref{lemmedivrotd2}. Following the lines of \cite{dgvnm} it becomes clear that the precised estimate
\begin{equation*}
\begin{Vmatrix}
u^\varepsilon(x)-u^0(x)-\varepsilon u^1\bigl(x,\frac{x}{\varepsilon}\bigr)-\varepsilon\vartheta_{u,bl}^\varepsilon(x)-\varepsilon^2u^2\bigl(x,\frac{x}{\varepsilon}\bigr)-\varepsilon^2\vartheta_{u,bl}^{2,\varepsilon}(x)
\end{Vmatrix}_{H^1(\Omega)}\leq C\varepsilon^2
\begin{Vmatrix}
u^0
\end{Vmatrix}_{H^3(\Omega)}
\end{equation*}
holds. It is nothing but a consequence of the way the involved functions factor in the product of a function depending only on $y$ and of $\nabla u^0$ (cf. \eqref{sepvarpsi}). We conclude, applying the triangular inequality, that
\begin{multline*}
\begin{Vmatrix}
u^\varepsilon(x)-u^0(x)-\varepsilon u^1\bigl(x,\frac{x}{\varepsilon}\bigr)-\varepsilon\vartheta_{u,bl}^\varepsilon(x)
\end{Vmatrix}_{L^2(\Omega)}\\
\leq
\begin{Vmatrix}
u^\varepsilon(x)-u^0(x)-\varepsilon u^1\bigl(x,\frac{x}{\varepsilon}\bigr)-\varepsilon\vartheta_{u,bl}^\varepsilon(x)-\varepsilon^2u^2\bigl(x,\frac{x}{\varepsilon}\bigr)-\varepsilon^2\vartheta_{u,bl}^{2,\varepsilon}(x)
\end{Vmatrix}_{H^1(\Omega)}
\\+\varepsilon^2
\begin{Vmatrix}
u^2\bigl(x,\frac{x}{\varepsilon}\bigr)
\end{Vmatrix}_{L^2(\Omega)}
+\varepsilon^2
\begin{Vmatrix}
\vartheta_{u,bl}^{2,\varepsilon}(x)
\end{Vmatrix}_{L^2(\Omega)}.
\end{multline*}
The estimate \eqref{enestL2ineq} now follows from \eqref{globalenestdgvnm}, the uniform boundedness of $u^2\bigl(\cdot,\frac{\cdot}{\varepsilon}\bigr)$ in $L^2(\Omega)$ and the \eqref{badestthetablepsmajH1/2}-like bound 
\begin{equation*}
\begin{Vmatrix}
\vartheta_{u,bl}^{2,\varepsilon}
\end{Vmatrix}_{H^1(\Omega)}\leq
C\varepsilon^{-\frac{1}{2}}\begin{Vmatrix}
u^0
\end{Vmatrix}_{H^3(\Omega)}. \qedhere
\end{equation*}
\end{proof}

We conclude this section focusing on a \eqref{enestL2ineq}-like estimate for $u^0$ satisfying a weaker assumption. 
\begin{theo}\label{theoL2ineqomega}
Assume $u^0\in H^{2+\omega}(\Omega)$, with $0\leq\omega\leq 1$.\\
Then
\begin{equation}\label{enestL2ineqomega}
\begin{Vmatrix}
u^\varepsilon(x)-u^0(x)-\varepsilon u^1\bigl(x,\frac{x}{\varepsilon}\bigr)-\varepsilon\vartheta_{u,bl}^\varepsilon(x)
\end{Vmatrix}_{L^2(\Omega)}\leq C\varepsilon^{1+\frac{\omega}{2}}
\begin{Vmatrix}
u^0
\end{Vmatrix}_{H^{2+\omega}(\Omega)}.
\end{equation}
\end{theo}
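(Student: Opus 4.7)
The statement is a direct analogue of Corollary \ref{corestuepsu0H1omega}, and the natural plan is to obtain it by interpolation between Proposition \ref{theoenestH1} and Proposition \ref{theoenestL2}, exactly as \eqref{inequepsu0L2bis} was obtained by interpolating \eqref{inequepsu0L2ter} with \eqref{inequepsu0L2}.

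First I would rewrite the quantity we want to estimate as the image of $u^0$ by a linear operator. Given $u^0\in H^1(\Omega)$, set $f:=-\nabla\cdot A^0\nabla u^0$ and $\varphi_0:=u^0_{|\partial\Omega}$; then $u^\varepsilon$ is the unique solution of \eqref{sysosc} with these data, $u^1(x,y)=\chi^\alpha(y)\partial_{x_\alpha}u^0(x)$, and $\vartheta^\varepsilon_{u,bl}$ solves \eqref{sysoscbord} with $\varphi(x,y)=-\chi^\alpha(y)\partial_{x_\alpha}u^0(x)$. All three constructions depend linearly on $u^0$, hence
\begin{equation*}
T_\varepsilon\colon\quad u^0\longmapsto u^\varepsilon(x)-u^0(x)-\varepsilon u^1\bigl(x,\tfrac{x}{\varepsilon}\bigr)-\varepsilon\vartheta^\varepsilon_{u,bl}(x)
\end{equation*}
is a well-defined linear map.

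Next, I would collect two endpoint bounds. Proposition \ref{theoenestH1}, combined with the continuous embedding $H^1(\Omega)\hookrightarrow L^2(\Omega)$, gives
\begin{equation*}
\|T_\varepsilon u^0\|_{L^2(\Omega)}\leq C\varepsilon\,\|u^0\|_{H^2(\Omega)},
\end{equation*}
so $T_\varepsilon\in\mathcal L(H^2(\Omega),L^2(\Omega))$ with operator norm $\leq C\varepsilon$. Proposition \ref{theoenestL2} gives
\begin{equation*}
\|T_\varepsilon u^0\|_{L^2(\Omega)}\leq C\varepsilon^{3/2}\,\|u^0\|_{H^3(\Omega)},
\end{equation*}
so $T_\varepsilon\in\mathcal L(H^3(\Omega),L^2(\Omega))$ with operator norm $\leq C\varepsilon^{3/2}$.

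Finally, I would invoke the standard real interpolation identity $\bigl[H^2(\Omega),H^3(\Omega)\bigr]_{\omega}=H^{2+\omega}(\Omega)$ for $0\leq\omega\leq 1$, valid on a Lipschitz domain (no boundary conditions are imposed on the interpolated scale). Interpolation of the two displayed bounds yields
\begin{equation*}
\|T_\varepsilon u^0\|_{L^2(\Omega)}\leq C\,(C\varepsilon)^{1-\omega}(C\varepsilon^{3/2})^{\omega}\,\|u^0\|_{H^{2+\omega}(\Omega)}=C\varepsilon^{1+\omega/2}\,\|u^0\|_{H^{2+\omega}(\Omega)},
\end{equation*}
which is \eqref{enestL2ineqomega}. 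The only mildly delicate point is the interpolation identity for the Sobolev scale on $\Omega$; since $\Omega$ is at worst a convex polygon (in particular Lipschitz), this is a classical consequence of the existence of a bounded extension operator $H^s(\Omega)\to H^s(\mathbb R^2)$ for $s\in\{2,3\}$, so no substantive new difficulty arises beyond what the previous results already provide.
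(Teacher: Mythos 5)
Your proposal is correct and follows essentially the same route as the paper: the paper likewise regards $u^0\mapsto u^\varepsilon-u^0-\varepsilon u^1\bigl(\cdot,\frac{\cdot}{\varepsilon}\bigr)-\varepsilon\vartheta^\varepsilon_{u,bl}$ as a linear operator, reads \eqref{enestH1ineq} and \eqref{enestL2ineq} as $H^2\to L^2$ and $H^3\to L^2$ bounds of orders $\varepsilon$ and $\varepsilon^{3/2}$, and interpolates (deferring the details to Moskow--Vogelius). Your explicit justification of the linearity of the map and of the interpolation identity on a Lipschitz domain fills in exactly the details the paper leaves to the reference.
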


As in the proof of corollary \ref{corestuepsu0H1omega}, estimate \eqref{enestH1ineq} (resp. \eqref{enestL2ineq}) states that the linear operator 
\begin{equation*}
u^0\longmapsto u^\varepsilon(x)-u^0(x)-\varepsilon u^1\bigl(x,\frac{x}{\varepsilon}\bigr)-\varepsilon\vartheta_{u,bl}^\varepsilon(x)
\end{equation*}
is bounded from $H^2(\Omega)$ to $L^2(\Omega)$ (resp. from $H^3(\Omega)$ to $L^2(\Omega)$). By interpolating between the two linear operators, one gets \eqref{enestL2ineqomega}. All details can be found in \cite{moscovog} and apply without any change to the case $N>1$.

\selectlanguage{english}

\section{Homogenization of boundary layer type systems
}\label{sechomblsys}
Throughout this section we are interested in the homogenization of the boundary layer type system
\begin{equation}\label{sysvartheta}
\left\{
\begin{array}{rll}
-\nabla \cdot A\bigl(\frac{x}{\varepsilon}\bigr)\nabla \vartheta_{u,bl}^\varepsilon=&0,& x\in \Omega\\
\vartheta_{u,bl}^\varepsilon=&-\chi^\alpha\bigl(\frac{x}{\varepsilon}\bigr)\partial_{x_\alpha}u^0(x),& x\in \partial \Omega
\end{array}
\right.
\end{equation}
that is in the study of the asymptotic behaviour of the sequence $\vartheta_{u,bl}^\varepsilon$ when $\varepsilon$ tends to $0$. This means we both look for a possible limit of the sequence and for estimates in norm of the speed of convergence. 
For all this section, we assume that $u^0$ solves \eqref{sysu0}, with $f\in L^2(\Omega)$ and $\varphi_0=0$.

This is a crucial step in the proof of theorems \ref{theoasylisse} and \ref{theoasypol}. As explained in the introduction, there is no regularity issue when $\Omega$ is smooth. On the contrary, when $\Omega$ is a polygon, we concentrate on minimal regularity. That is why we give two convergence rates: the first under the minimal assumption $u^0\in H^{2+\omega}(\Omega)$ with $0<\omega<1$, the second under the stronger regularity assumption $u^0\in H^3(\Omega)\cap C^2(\overline{\Omega})$, where we focus on improving the speed of convergence.

\emph{For notational convenience, let us write in this section $\vartheta_{bl}^\varepsilon$ instead of $\vartheta_{u,bl}^\varepsilon$.}

\subsection{Smooth uniformly convex domains}
Assume that $\Omega\subset\mathbb R^2$ is a smooth (say $C^\infty$) uniformly convex domain i.e. all principal curvatures are bounded from below; see \cite{bre} section III.$7$ for another definition. The regularizing properties of elliptic operators in smooth domains yield that $u^0\in C^\infty(\overline{\Omega})$ (see \cite{adn2} theorem $10.5$). Therefore, the boundary data function $\varphi(x,y)=-u^1(x,y)=-\chi^\alpha(y)\partial_{x_\alpha}u^0(x)$ is a smooth function. Note that we do not need assumption {\bf (A4)}, i.e. the symmetry of $A$. 
\begin{theo}[G\'erard-Varet and Masmoudi in \cite{dgvnm2}]\label{theodgvnm2}
For all $1\leq p<\infty$ there exists $\varphi^*\in L^p(\partial\Omega)$ such that $\vartheta_{bl}^{\varepsilon}$ converges in $L^2(\Omega)$ towards $\vartheta_{bl}^*\in L^p(\Omega)$ solution of
\begin{equation*}
\left\{
\begin{array}{rll}
-\nabla \cdot A^0\nabla \vartheta_{bl}^*=&0,& x\in \Omega\\
\vartheta_{bl}^*=&\varphi^*(x),& x\in \partial\Omega
\end{array}
\right. .
\end{equation*}
Moreover, for all $0\leq\gamma<\frac{1}{11}$,
\begin{equation}\label{estCVordre0luc}
\begin{Vmatrix}
\vartheta_{bl}^{\varepsilon}-\vartheta_{bl}^*
\end{Vmatrix}_{L^2(\Omega)}=O\bigl(\varepsilon^{\gamma}\bigr).
\end{equation}
\end{theo}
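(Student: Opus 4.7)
The plan is to follow the strategy of G\'erard-Varet and Masmoudi in \cite{dgvnm2}. The overall idea is to localize the boundary layer analysis around each boundary point, compare $\vartheta_{bl}^\varepsilon$ with a suitable half-space boundary layer whose asymptotic behaviour at infinity is explicit, and then patch together these local approximations. To begin with, since $\Omega$ is smooth and $f\in L^2(\Omega)$, elliptic regularity gives $u^0\in C^\infty(\overline\Omega)$, so that the Dirichlet data in \eqref{sysvartheta} is smooth in the tangential variable and periodic in the fast variable. A uniform a priori bound is provided by theorem \ref{theoavlinsys}, which gives $\|\vartheta_{bl}^\varepsilon\|_{L^p(\Omega)}=O(1)$ for every $1\leq p<\infty$.

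Next, I would cover $\partial\Omega$ by a finite family of charts and locally flatten the boundary. Near a boundary point $x_0$ with outer normal $n(x_0)$, I would approximate $\vartheta_{bl}^\varepsilon$ by the solution $V^{\varepsilon}$ of a half-space boundary layer system with frozen smooth data $-\chi^\alpha(\cdot/\varepsilon)\partial_{x_\alpha}u^0(x_0)$ posed on the affine hyperplane tangent to $\partial\Omega$ at $x_0$. For every direction $n(x_0)$ satisfying a Diophantine condition, which excludes only a set of measure zero in $S^1$, this half-space problem admits a unique bounded solution which converges at infinity to an explicit constant $\varphi^*(x_0)$, with a quantitative algebraic rate depending on the strength of the Diophantine assumption. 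Uniform convexity of $\Omega$ is essential here: it ensures that the Gauss map $x_0\mapsto n(x_0)$ is a smooth diffeomorphism from $\partial\Omega$ to $S^1$, so that Diophantine estimates on $S^1$ transfer to the boundary and give quantitative control on the measure of nearly-resonant patches.

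I would then patch together the local constants $\varphi^*(x_0)$ to obtain a boundary trace $\varphi^*\in L^p(\partial\Omega)$ for every $1\leq p<\infty$, and define $\vartheta_{bl}^*\in L^p(\Omega)$ as the solution of the homogenized Dirichlet problem with boundary data $\varphi^*$. The $L^2(\Omega)$ convergence $\vartheta_{bl}^\varepsilon\to\vartheta_{bl}^*$ is then obtained by summing the local half-space approximations and using theorem \ref{theoavlinsys} to propagate the pointwise errors into a global $L^2$ estimate.

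The main obstacle is the quantitative control of the error near resonant directions: the half-space approximation degenerates as $n(x_0)$ approaches a direction that is rational with respect to the periodic lattice. The specific exponent $1/11$ should come from an interpolation optimization. One splits $\partial\Omega$ into an \emph{$\varepsilon$-Diophantine} good set, where the local half-space approximation converges at an explicit algebraic rate, and a complementary bad set whose measure shrinks polynomially in $\varepsilon$ thanks to uniform convexity. Balancing the pointwise error on the good set against the $L^p$ bound of theorem \ref{theoavlinsys} on the bad set, and integrating, should yield any rate $\gamma<1/11$.
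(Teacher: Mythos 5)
This statement is not proved in the paper at all: it is imported verbatim from \cite{dgvnm2}, and the author's entire ``proof'' is the sentence referring the reader to that paper, so there is nothing internal to compare your argument against. Your outline does faithfully reproduce the strategy of the cited reference -- smoothness of $u^0$ by elliptic regularity, localization of the Dirichlet data near boundary points, comparison with half-space boundary layers whose normals satisfy a Diophantine condition, the role of uniform convexity through the Gauss map in controlling the measure of nearly resonant boundary patches, the Avellaneda--Lin estimate of theorem \ref{theoavlinsys} to convert boundary errors into interior $L^p$ bounds, and a final good set/bad set optimization whose balance produces the exponent $\frac{1}{11}$ (which is $(d-1)/(3d+5)$ for $d=2$). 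Be aware, though, that as written this is a plan rather than a proof: the quantitative ingredients you invoke -- the algebraic convergence rate of the half-space layer to its tail as a function of the Diophantine constants, the fact that the resulting trace $\varphi^*$ lies in $L^p(\partial\Omega)$ for all finite $p$ even though it may blow up near rational directions, and the precise bookkeeping that yields $\gamma<\frac{1}{11}$ -- are exactly the hard content of \cite{dgvnm2}, and your sketch asserts them rather than establishes them; since the paper itself treats the theorem as an external citation, that level of detail is acceptable here, but it would not stand alone as a proof.
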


We do not attempt to weaken the regularity assumption on $\Omega$, which itself implies strong regularity on $u^0$. 
For details concerning the proof and relevant remarks, we refer to \cite{dgvnm2}.

\subsection{Convex polygonal domains}
\label{convexpol}
Let us assume $\Omega$ to be a bounded convex polygonal domain with $M$ edges, supported by the lines $K^k$ of unitary inward normal $n^k\in S^{1}$. Thus
\begin{equation*}
\Omega=\bigcap_{k=1}^M\bigl\{x,\ n^k\cdot x>c^k\bigr\} 
\end{equation*}
with $c^k\in\mathbb R$, and for all $1\leq k\leq M$,
\begin{equation*}
K^k=\bigl\{x,\ n^k\cdot x=c^k\bigr\}.
\end{equation*}

Beyond this first assumption on $\Omega$ we require either
\begin{description}
\item[(RAT) rationality] for all $1\leq k\leq M$, 
\begin{equation}\label{crat}
n^k\in\mathbb R\mathbb Q^2
\end{equation}
or
\item[(DIV) small divisors] there exists $C,\ l>0$ such that for all $1\leq k\leq M$,
\begin{equation}\label{ptdiva}
\forall\xi=(\xi_1,\xi_2)\in\mathbb Z^2\setminus\{0\},\qquad |P_{{n^k}^\perp}(\xi)|\geq C|\xi|^{-l}
\end{equation}
where $P_{{n^k}^\perp}$ is the orthogonal projector on ${n^k}^\perp$. 
\end{description}
As $\Omega\subset\mathbb R^2$, condition \eqref{ptdiva} boils down to
\begin{equation}\label{ptdivb}
\forall\xi\in\mathbb Z^2\setminus\{0\},\qquad |n^k\cdot\xi|\geq C|\xi|^{-l}
\end{equation}
where $n^k\cdot\xi:=n^k_1\xi_1+n^k_2\xi_2$.
Note that a vector $n\in\mathbb R^2$ cannot satisfy both \eqref{crat} and \eqref{ptdiva} or \eqref{ptdivb}. 

Keeping in mind that $\vartheta_{bl}^\varepsilon$ solves \eqref{sysvartheta}, one has the following convergence theorems. Note that as soon as we invoke the regularity theorem \ref{theoregdaugekmr} in the proofs, we need the symmetry assumption {\bf (A4)} on $A$.
\begin{theo}\label{CVhomorat}
Assume $\Omega$ satisfies {\bf (RAT)}. Assume furthermore that $u^0\in H^{2+\omega}(\Omega)$ with $0<\omega<1$ (resp. $u^0\in H^3(\Omega)\cap C^2(\overline{\Omega})$).\\
Then there exists a sequence $(\varepsilon_n)$ and $(V^{k,\alpha,*})_{
\begin{subarray}{c}
1\leq k\leq M\\
1\leq\alpha\leq 2
\end{subarray}}\in {M_N(\mathbb R)}^{2\times M}$ such that $\vartheta_{bl}^{\varepsilon_n}$ solution of \eqref{sysoscbord} converges in $L^2(\Omega)$ towards $\vartheta_{bl}^*$ solution of
\begin{equation}\label{sysoscbordhom}
\left\{
\begin{array}{rll}
-\nabla \cdot A^0\nabla \vartheta_{bl}^*=&0,& x\in \Omega\\
\vartheta_{bl}^*=&-V^{k,\alpha,*}\partial_{x_\alpha}u^0(x),& x\in \partial\Omega\cap K^k, \text{ for all } 1\leq k\leq M
\end{array}
\right. .
\end{equation}
Moreover, we have the following convergence rates:
\begin{enumerate}
\item if $u^0\in H^{2+\omega}(\Omega)$, then there exists $0<\gamma<\frac{\omega}{2}$ such that
\begin{equation*}
\begin{Vmatrix}
\vartheta_{bl}^{\varepsilon_n}-\vartheta_{bl}^*
\end{Vmatrix}_{L^2(\Omega)}=O\bigl(\varepsilon_n^{\gamma}\bigr);
\end{equation*}
\item if $u^0\in H^3(\Omega)\cap C^2(\overline{\Omega})$, then 
\begin{equation*}
\begin{Vmatrix}
\vartheta_{bl}^{\varepsilon_n}-\vartheta_{bl}^*
\end{Vmatrix}_{L^2(\Omega)}=O\bigl(\varepsilon_n^{\frac{1}{2}}\bigr).
\end{equation*}
\end{enumerate}
\end{theo}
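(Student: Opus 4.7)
I would construct an explicit edge-by-edge approximate boundary layer $\vartheta^{\varepsilon,\mathrm{app}}_{bl}$ exploiting the rationality hypothesis \textbf{(RAT)}, and then control the remainder by the $L^p$ bound of Theorem~\ref{theoavlinscal} combined with the corrector estimates of Section~\ref{secsomerrorest}. Because $n^k\in\mathbb{R}\mathbb{Q}^2$, the trace of $\chi^\alpha$ along each edge $K^k$ is periodic in the tangential direction, so I would first consider, for each $(k,\alpha)$, the half-space cell problem
\begin{equation*}
-\nabla_y \cdot A(y)\nabla_y V^{k,\alpha} = 0 \ \text{in } \{n^k \cdot y > c^k\}, \qquad V^{k,\alpha} = \chi^\alpha(y) \ \text{on } \{n^k\cdot y = c^k\},
\end{equation*}
and, following the Moskow--Vogelius / Allaire--Amar analysis (extended here to systems), extract a constant $V^{k,\alpha,*}\in M_N(\mathbb{R})$ such that $V^{k,\alpha}(y)\to V^{k,\alpha,*}$ exponentially fast as $n^k\cdot y\to\infty$. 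The candidate homogenised limit $\vartheta^*_{bl}$ would then be defined as the solution of \eqref{sysoscbordhom}; its existence and regularity in the convex polygon rely on assumption \textbf{(A4)} via Theorem~\ref{theoregdaugekmr}.

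Next, introducing a smooth partition of unity $(\eta_k)_{1\leq k\leq M}$ on $\overline{\Omega}$ localising each edge away from the corners, I would set
\begin{equation*}
\vartheta^{\varepsilon,\mathrm{app}}_{bl}(x) := \vartheta^*_{bl}(x) - \sum_{k=1}^M \eta_k(x)\bigl[V^{k,\alpha}\bigl(\tfrac{x}{\varepsilon}\bigr) - V^{k,\alpha,*}\bigr]\partial_{x_\alpha}u^0(x).
\end{equation*}
The remainder $\vartheta^\varepsilon_{bl}-\vartheta^{\varepsilon,\mathrm{app}}_{bl}$ then satisfies an elliptic system with oscillating coefficients whose boundary datum is small in $L^2(\partial\Omega)$ (thanks to the exponential stabilisation of $V^{k,\alpha}$ at distances $\gtrsim\varepsilon$ from the edge) and whose source term is concentrated in the corner wedges through $\nabla\eta_k$. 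Extracting a subsequence $(\varepsilon_n)$ is required so that the shifted lattices $\varepsilon_n\mathbb{Z}^2$ relative to each $K^k$ stabilise modulo the one-dimensional lattice of periods along $K^k$; this is the source of the subsequence in the statement and of the well-definedness of the boundary values $V^{k,\alpha,*}$.

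To close the argument I would combine (i) Theorem~\ref{theoavlinscal}'s $L^p$-bound (applied componentwise) or a direct energy estimate on the remainder, (ii) the corrector estimates \eqref{inequepsu0L2bis} and \eqref{enestL2ineqomega} applied to the difference $\vartheta^\varepsilon_{bl}-\vartheta^*_{bl}$, and (iii) a scale analysis near the corners optimising the cutoff width. Under the minimal hypothesis $u^0\in H^{2+\omega}(\Omega)$, only Sobolev control of $\partial u^0$ near each vertex is available, and the optimisation then yields an exponent $0<\gamma<\omega/2$. Under the stronger hypothesis $u^0\in H^3(\Omega)\cap C^2(\overline{\Omega})$, pointwise boundedness of $\partial u^0$ up to the vertices balances the two error sources optimally and produces the rate $O(\varepsilon_n^{1/2})$, which is essentially sharp since the $H^{1/2}(\partial\Omega)$-norm of the oscillating trace is of order $\varepsilon^{-1/2}$.

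The main obstacle will be the corner analysis: near each vertex, two adjacent half-space correctors associated with distinct normals $n^k$ and $n^{k+1}$ must be glued, and the cutoffs $\nabla\eta_k$ produce commutator source terms whose size must be tuned against both the $\varepsilon$-thickness of the boundary layer and the behaviour of $\partial u^0$ at the vertex. Rationality is essential here, as it reduces the half-space cell problem to an essentially one-dimensional periodic problem and yields \emph{exponential} stabilisation of $V^{k,\alpha}$ to $V^{k,\alpha,*}$, thereby bypassing the delicate Diophantine estimates and merely algebraic decay that would be required under \textbf{(DIV)}.
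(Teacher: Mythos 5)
Your global architecture matches the paper's: boundary layer tails $V^{k,\alpha,*}$ from half-space cell problems (with exponential stabilisation under \textbf{(RAT)} and subsequence extraction to fix the lattice phase $c^k/\varepsilon_n$), the homogenised limit $\vartheta^*_{bl}$ made well-posed via Theorem~\ref{theoregdaugekmr}, and a decomposition into a non-oscillating part plus edge-localised correctors whose cross-interactions concentrate at the corners. However, there is a genuine gap in how you propose to close the estimate on the remainder. A direct energy estimate cannot work: the remainder's boundary datum, though small in $L^2(\partial\Omega)$, is an oscillating trace whose $H^{\frac{1}{2}}(\partial\Omega)$ norm is $O(\varepsilon^{-\frac{1}{2}})$ -- the paper explicitly flags this obstruction and abandons the $H^{\frac{1}{2}}$ route. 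Nor can you invoke Theorem~\ref{theoavlinscal} ``componentwise'': the system is coupled, and the systems version (Theorem~\ref{theoavlinsys}) requires a $C^{1,\alpha}$ domain, which the paper points out is precisely what excludes polygons. The paper's actual closing mechanism, which is absent from your proposal, is threefold: (i) the boundary mismatch is estimated in $W^{1-\frac{1}{p},p}(\partial\Omega)$ for $1\leq p<2$ (Lemma~\ref{lemphiblepsH1/2}), where the weaker trace exponent $1-\frac{1}{p}<\frac{1}{2}$ turns the $\varepsilon^{-\frac{1}{2}}$ loss into a gain, via a tame product estimate and a weight $r^\omega$ near each vertex exploiting $u^0\in C^{1,\omega}(\overline{\Omega})$; (ii) the commutator source term is put in $H^{-1}(\Omega)$ using Hardy's inequality to absorb the $\varepsilon^{-1}$ factor against the decay of the profiles away from the edges (Lemma~\ref{lemrblepsH-1}); (iii) the two are combined through a Meyers-type $W^{1,p}$ estimate for $p_0<p<2$ with constants uniform in $\varepsilon$ (Theorem~\ref{theomeyersW1p}), followed by the Sobolev embedding $W^{1,p}(\Omega)\hookrightarrow L^2(\Omega)$. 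Without this $p<2$ functional framework your corner analysis has no way to balance the two error sources, and the exponents $\gamma<\frac{\omega}{2}$ and $\frac{1}{2}$ cannot be extracted.

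A secondary, more cosmetic difference: the paper does not use a partition of unity $(\eta_k)$. It sums the half-space correctors $v^{k,\varepsilon}_{bl}$ globally over $\Omega$ (legitimate since convexity puts $\Omega$ inside each half-space) and lets their polynomial-in-$\frac{d(x,K^k)}{\varepsilon}$ decay kill the contribution of edge $k'$ on edge $k\neq k'$; the boundary defect on $K^k$ is then exactly $-\sum_{k'\neq k}v^{k',\varepsilon}_{bl}$. Your cutoff version introduces additional commutator terms supported on $\{\nabla\eta_k\neq 0\}$ that you would still have to estimate, so it buys nothing over the paper's global summation and adds work.
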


\begin{theo}\label{CVhomodiv}
Assume $\Omega$ satisfies {\bf (DIV)}. Assume furthermore that $u^0\in H^{2+\omega}(\Omega)$ with $0<\omega<1$ (resp. $u^0\in H^3(\Omega)\cap C^2(\overline{\Omega})$).\\
Then there exists $(V^{k,\alpha,*})_{
\begin{subarray}{c}
1\leq k\leq M\\
1\leq\alpha\leq 2
\end{subarray}}\in {M_N(\mathbb R)}^{2\times M}$ such that $\vartheta_{bl}^{\varepsilon}$ solution of \eqref{sysoscbord} converges in $L^2(\Omega)$ towards $\vartheta_{bl}^*$ solution of
\begin{equation}\label{sysoscbordhomdiv}
\left\{
\begin{array}{rll}
-\nabla \cdot A^0\nabla \vartheta_{bl}^*=&0,& x\in \Omega\\
\vartheta_{bl}^*=&-V^{k,\alpha,*}\partial_{x_\alpha}u^0(x),& x\in \partial \Omega\cap K^k, \text{ for all } 1\leq k\leq M
\end{array}
\right. .
\end{equation}
Moreover, we have the following convergence rates:
\begin{enumerate}
\item if $u^0\in H^{2+\omega}(\Omega)$, then there exists $0<\gamma<\frac{\omega}{2}$ such that
\begin{equation}\label{estCVordre0divbis}
\begin{Vmatrix}
\vartheta_{bl}^{\varepsilon}-\vartheta_{bl}^*
\end{Vmatrix}_{L^2(\Omega)}=O\bigl(\varepsilon^{\gamma}\bigr);
\end{equation}
\item if $u^0\in H^3(\Omega)\cap C^2(\overline{\Omega})$, then
\begin{equation}\label{estCVordre0div}
\begin{Vmatrix}
\vartheta_{bl}^{\varepsilon}-\vartheta_{bl}^*
\end{Vmatrix}_{L^2(\Omega)}=O\bigl(\varepsilon^{\frac{1}{2}}\bigr).
\end{equation}
\end{enumerate}
\end{theo}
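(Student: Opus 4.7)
The overall strategy is to construct, for each edge $K^k$, a half-plane boundary layer profile via the results of \cite{dgvnm}, glue these local profiles together with a partition of unity subordinate to the edges, and compare the resulting ansatz to $\vartheta_{bl}^\varepsilon$ through an energy estimate. The separated-variables structure of the boundary data, $\varphi(x,y) = -\chi^\alpha(y)\partial_{x_\alpha} u^0(x)$, is essential: it reduces the problem for fixed $k$ and $\alpha$ to the half-plane boundary layer analyzed by G\'erard-Varet and Masmoudi. Under {\bf (DIV)}, their result provides, for each pair $(k,\alpha)$, a unique boundary layer $V^{k,\alpha,\varepsilon} = V^{k,\alpha}(x/\varepsilon)$ in the half-plane $\{n^k\cdot x > c^k\}$ with boundary data $-\chi^\alpha$ on $K^k$, satisfying uniform $L^\infty$ bounds and converging exponentially fast (in rescaled distance to $K^k$) to a constant tail $V^{k,\alpha,*}\in M_N(\mathbb R)$.

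The next step is to define an approximate boundary layer
\begin{equation*}
\vartheta_{bl}^{\varepsilon,\mathrm{app}}(x) := \sum_{k=1}^M \zeta^k(x)\, V^{k,\alpha}\!\Bigl(\frac{x}{\varepsilon}\Bigr)\,\partial_{x_\alpha}u^0(x),
\end{equation*}
where $(\zeta^k)$ is a smooth partition of unity localizing near each edge, chosen to vanish in a neighborhood of the vertices. One then solves an auxiliary system whose solution $\tilde{\vartheta}_{bl}^\varepsilon$ matches the correct boundary condition $-\chi^\alpha(x/\varepsilon)\partial_{x_\alpha}u^0$ on each edge, writes $\vartheta_{bl}^\varepsilon = \tilde{\vartheta}_{bl}^\varepsilon + \text{correction}$, and estimates the difference $\vartheta_{bl}^\varepsilon - \vartheta_{bl}^{\varepsilon,\mathrm{app}}$ using the ellipticity of $A(\cdot/\varepsilon)$. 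Passing to the limit in $\vartheta_{bl}^{\varepsilon,\mathrm{app}}$ identifies the candidate limit $\vartheta_{bl}^*$: on each edge $K^k$ the tails $V^{k,\alpha,*}\partial_{x_\alpha}u^0$ appear as the effective Dirichlet data, giving exactly system \eqref{sysoscbordhomdiv}. Uniqueness of $V^{k,\alpha,*}$ follows from the uniqueness of the boundary layer tails of \cite{dgvnm}, and the hypothesis $u^0\in H^{2+\omega}(\Omega)$ (automatic via theorem \ref{theoregdaugekmr} under {\bf (A4)} and convexity of $\Omega$) guarantees that $\partial_{x_\alpha} u^0$ has enough boundary regularity for these manipulations.

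The main obstacle is the behavior near the vertices of $\Omega$. Two neighboring edges carry two distinct boundary layer profiles which are incompatible at their common vertex, so the ansatz $\vartheta_{bl}^{\varepsilon,\mathrm{app}}$ fails to match the boundary data there. One has to introduce a vertex cutoff of width $\varepsilon^\delta$ and use Avellaneda--Lin type maximum principle bounds (theorem \ref{theoavlinscal} works here for the scalar components; for the vectorial case one falls back on the $L^2$ theory), combined with trace estimates in fractional Sobolev spaces $H^{2+\omega}(\Omega)$, to control the resulting error. The exponent $\delta$ is then optimized: losing a factor $\varepsilon^{-1/2}$ from the $H^{1/2}$ norm of the boundary correction and gaining $\varepsilon^{\delta(1+\omega)}$ from the smallness of $\partial_{x_\alpha}u^0$ near the vertex yields a rate $\varepsilon^\gamma$ for some $0<\gamma<\omega/2$, proving the first part.

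For the second part, the stronger hypothesis $u^0\in H^3(\Omega)\cap C^2(\overline{\Omega})$ allows pointwise control of $\partial_{x_\alpha} u^0$ and its derivatives near the vertices, and permits using the sharper energy estimate of proposition \ref{theoenestL2} together with the $O(\varepsilon)$ convergence of the half-plane profiles of \cite{dgvnm}. The optimization of the vertex cutoff then yields exactly $\varepsilon^{1/2}$, which is the natural barrier coming from the $\varepsilon^{-1/2}$ loss on $H^{1/2}$ boundary traces weighed against the $\varepsilon$-scale of the boundary layer. Throughout, the symmetry assumption {\bf (A4)} is used only via theorem \ref{theoregdaugekmr} to guarantee the $H^{2+\omega}$ regularity needed in the first case.
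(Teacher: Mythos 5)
Your skeleton — half-plane profiles from \cite{dgvnm} for each edge, an ansatz built by superposing them, an energy estimate on the remainder, and identification of the limit system through the tails $V^{k,\alpha,*}$ — is the right one, and it is essentially the paper's. But two of your key technical claims do not hold as stated, and the second one is where the actual difficulty of the theorem lives. First, under {\bf (DIV)} the profiles $V^{k,\alpha,\varepsilon}$ do \emph{not} converge exponentially to their tails; exponential convergence is a {\bf (RAT)} phenomenon (Moskow--Vogelius, Allaire--Amar). What theorem \ref{theodecbldiv} gives is only the faster-than-any-polynomial decay \eqref{inegdecVkalphaeps}. This is harmless for the argument (proposition \ref{lemvkepsblL2} only needs decay of order $m$ with $2m>1$), but you should not rely on exponential rates.

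The genuine gap is your treatment of the error near the vertices. You propose an $\varepsilon^\delta$ vertex cutoff controlled by ``Avellaneda--Lin type maximum principle bounds'' with a fallback on ``the $L^2$ theory.'' Neither tool is available here: theorem \ref{theoavlinscal} requires $N=1$, theorem \ref{theoavlinsys} requires a $C^{1,\alpha}$ domain, and a polygon is neither — the paper is explicit that this is precisely why the polygonal case needs a separate analysis. The ``$L^2$ theory,'' i.e.\ the $H^{1/2}(\partial\Omega)\to H^1(\Omega)$ energy estimate, loses exactly the factor $\varepsilon^{-1/2}$ you mention, and no choice of $\delta$ recovers it: the boundary mismatch $\varphi^\varepsilon_{bl}$ (the trace of $-\sum_{k'\neq k}v^{k',\varepsilon}_{bl}$ on edge $k$) is $O(\varepsilon^{-1/2})$ in $H^{1/2}(\partial\Omega)$ and nothing better. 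The paper's resolution is to abandon the $L^2$-based scale entirely for this term: estimate $\varphi^\varepsilon_{bl}$ in $W^{1-\frac{1}{p},p}(\partial\Omega)$ for $p<2$ (lemma \ref{lemphiblepsH1/2}, via the tame product estimate and the weight $r^\omega$ absorbing the vertex singularity of $\nabla u^0$), estimate the interior source in $H^{-1}(\Omega)$ using Hardy's inequality (lemma \ref{lemrblepsH-1}), and then invoke a Meyers-type $W^{1,p}$ estimate for the oscillating operator (theorem \ref{theomeyersW1p}) with $p_0<p<2$, concluding by the Sobolev embedding $W^{1,p}(\Omega)\hookrightarrow L^2(\Omega)$. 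Without this $L^p$ detour your error estimate does not close. A secondary omission: the paper's decomposition \eqref{dectriangbis} passes through the intermediate solution $\vartheta^{\varepsilon,*}_{bl}$ (oscillating coefficients, non-oscillating homogenized boundary data), and the rate for $\|\vartheta^{\varepsilon,*}_{bl}-\vartheta^*_{bl}\|_{L^2}$ requires $H^{1+\gamma}$ (resp.\ $H^2$) regularity of $\vartheta^*_{bl}$ itself via theorem \ref{theoregdaugekmr} and corollaries \ref{corestuepsu0H1omega}, \ref{coruepsu0L2} — this, not the regularity of $u^0$, is where {\bf (A4)} and convexity are actually used in this proof.
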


The only, but major, difference between theorem \ref{CVhomorat} and \ref{CVhomodiv} is that, in the small divisors case, convergence holds for the whole sequence, whereas in the rational case, convergence takes place up to the extraction of a subsequence $(\varepsilon_n)$, the constant matrices $V^{k,\alpha,*}$ depending on $(\varepsilon_n)$. 

\subsection{Proof of theorem \ref{CVhomodiv}}

The proofs of theorems \ref{CVhomorat} and \ref{CVhomodiv} follow the same steps. They differ mainly in one intermediate result, which explains why in the rational case, the convergence result is true only up to the extraction of a subsequence. Although we focus on the {\bf (DIV)} assumption, we underline the difference with assumption {\bf (RAT)}.

\subsubsection{Existence of the boundary layer tails}
Let us show the existence of the matrices $V^{k,\alpha,*}$. Let $1\leq k\leq M$ and $1\leq\alpha\leq 2$. We are interested in the boundary layer profile in the vicinity of vertex $k$. Thus, introduce $v_{bl}^{k,\alpha,\varepsilon}$ solution of
\begin{equation*}
\left\{
\begin{array}{rll}
-\nabla_y \cdot A(y)\nabla_y v_{bl}^{k,\alpha,\varepsilon}=&0,& y\in \Omega^{k,\varepsilon}\\
v_{bl}^{k,\alpha,\varepsilon}=&\chi^\alpha(y),& y\in \partial \Omega^{k,\varepsilon}
\end{array}
\right. 
\end{equation*}
where $\Omega^{k,\varepsilon}:=\bigl\{y,\ n^k\cdot y-\frac{c^k}{\varepsilon}>0\bigr\}$.
Let $M^k\in M_2(\mathbb R)$ be an orthogonal matrix, mapping $e_2:=\begin{pmatrix}0\\1\end{pmatrix}$ to $n^k$. 

The following theorem describes the profile of $V^{k,\alpha,\varepsilon}:=v^{k,\alpha,\varepsilon}_{bl}(M^k\cdot)$.
\begin{theo}[G\'erard-Varet and Masmoudi in \cite{dgvnm}]\label{theodecbldiv}
Assume that $\Omega$ satisfies {\bf (DIV)}.\\
Then, 
\begin{enumerate}
\item for all $\varepsilon>0$, there exists $V^{k,\alpha,\varepsilon}\in C^\infty\Bigl(\mathbb T\times \bigl]\frac{c^k}{\varepsilon},\infty\bigr[\Bigr)$;
\item there exists a matrix $V^{k,\alpha,*}\in M_N(\mathbb R)$ such that for all $\beta\in\mathbb N^2$, for all $m\in\mathbb N$, there is a constant $C_{|\beta|,m}>0$ satisfying for all $\varepsilon>0$ and $z_2>\frac{c^k}{\varepsilon}$,
\begin{equation}\label{inegdecVkalphaeps}
\Bigl(1+\begin{vmatrix}z_2-\frac{c^k}{\varepsilon}\end{vmatrix}^m\Bigr)\sup_{z_1\in\mathbb R}
\begin{vmatrix}
\partial_{z}^\beta\bigl(V^{k,\alpha,\varepsilon}(z_1,z_2)-V^{k,\alpha,*}\bigr)
\end{vmatrix}\leq C_{|\beta|,m}.
\end{equation}
\end{enumerate}
\end{theo}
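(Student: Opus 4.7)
The plan is to adapt the G\'erard-Varet--Masmoudi strategy of \cite{dgvnm}, whose output is essentially this statement. After rotating by $(M^k)^T$ and setting $V^{k,\alpha,\varepsilon}(z):=v^{k,\alpha,\varepsilon}_{bl}(M^kz)$, one reduces to the half-plane problem
\[
-\nabla_z\cdot \tilde A(z)\nabla_z V^{k,\alpha,\varepsilon}=0\quad\text{in }\{z_2>c^k/\varepsilon\},\qquad V^{k,\alpha,\varepsilon}\big|_{z_2=c^k/\varepsilon}=\chi^\alpha(M^k z),
\]
with $\tilde A(z):=(M^k)^T A(M^k z) M^k$, which is $(M^k)^T\mathbb Z^2$-periodic. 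Under {\bf (DIV)} this lattice is not aligned with any coordinate axis, so neither $\tilde A$ nor the boundary datum is $\mathbb Z$-periodic in $z_1$: this is the root of the difficulty.

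First I would regain periodicity by lifting to $\mathbb T^2\times \mathbb R_+$. Introduce an auxiliary variable $\theta\in\mathbb T^2$ and a scaled normal variable $t=z_2-c^k/\varepsilon$, and look for $\mathcal U(\theta,t)$ such that $V^{k,\alpha,\varepsilon}(z_1,z_2)=\mathcal U(M^k z\bmod\mathbb Z^2,\, z_2-c^k/\varepsilon)$. Propagating this ansatz through the chain rule, the lifted unknown must satisfy a degenerate elliptic system
\[
-\mathcal D^\ast\bigl(A(\theta)\mathcal D\mathcal U\bigr)=0\quad\text{in }\mathbb T^2\times\mathbb R_+,\qquad \mathcal U\big|_{t=0}=\chi^\alpha(\theta),
\]
where $\mathcal D$ encodes the action of the $y$-gradient after substitution. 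The lifted operator has periodic coefficients in $\theta$, which opens the door to Fourier decomposition; it is elliptic in $t$ but degenerates along all $\theta$-directions orthogonal to $n^k$.

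Next, expand $\mathcal U(\theta,t)=\sum_{\xi\in\mathbb Z^2}\hat{\mathcal U}_\xi(t)\,e^{2i\pi\xi\cdot\theta}$. The lifted system becomes a countable family of linear second-order ODEs with characteristic rates proportional to $|n^k\cdot\xi|$. Bounded solutions decay like $e^{-c|n^k\cdot\xi|t}$, the zero-mode stabilising at a constant matrix $V^{k,\alpha,\ast}\in M_N(\mathbb R)$ and each nonzero mode tending to $0$. By {\bf (DIV)}, $|n^k\cdot\xi|\geq C|\xi|^{-l}$, and by {\bf (A3)} the Fourier coefficients $\hat\chi^\alpha_\xi$ decay faster than any polynomial; these two facts combine to give absolute convergence of the series and all its derivatives, together with the polynomial weights $|t|^m$ needed in \eqref{inegdecVkalphaeps}. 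Restricting to $\theta=M^k z\bmod\mathbb Z^2$ then yields the desired bound for $V^{k,\alpha,\varepsilon}-V^{k,\alpha,\ast}$. Under {\bf (RAT)} the analogous lifting uses a one-dimensional torus, so no diophantine issue arises, but the limit matrix depends on how the rational slope positions itself inside the period cell, which is responsible for the subsequence extraction in Theorem \ref{CVhomorat}.

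The main obstacle is the well-posedness and regularity theory of the lifted degenerate-elliptic system for $\mathcal U$. Standard elliptic methods do not apply on $\mathbb T^2\times\mathbb R_+$ because of the degeneracy in the tangential directions; existence, uniqueness, and smoothness have to be obtained by combining the mode-by-mode ODE analysis with weighted energy estimates, and the summability of the Fourier series is available only thanks to the diophantine bound. This is the technical heart of \cite{dgvnm}, from which \eqref{inegdecVkalphaeps} is then extracted by unwinding the construction.
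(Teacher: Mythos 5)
First, a point of comparison: the paper does not prove Theorem \ref{theodecbldiv} at all -- it is imported verbatim from G\'erard-Varet and Masmoudi \cite{dgvnm}, and the author's ``proof'' is the citation. So the relevant question is whether your sketch faithfully reconstructs the argument of \cite{dgvnm} or could stand on its own. Your overall skeleton does match theirs: rotate by $M^k$, lift the quasi-periodic half-plane problem to a problem on $\mathbb T^2\times\mathbb R_+$ for $\mathcal U(\theta,t)$ with $v^{k,\alpha,\varepsilon}_{bl}(y)=\mathcal U(y\ \mathrm{mod}\ \mathbb Z^2,\ y\cdot n^k-\frac{c^k}{\varepsilon})$, note that the lifted operator $-(\nabla_\theta+n^k\partial_t)\cdot A(\theta)(\nabla_\theta+n^k\partial_t)$ is degenerate elliptic, and use the Diophantine condition {\bf (DIV)} to turn qualitative convergence to a constant tail into the quantitative super-polynomial estimate \eqref{inegdecVkalphaeps}. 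Your explanation of why {\bf (RAT)} forces a subsequence (the tail depends on the position $\frac{c^k}{\varepsilon}$ of the boundary line relative to the lattice) is also the right one.

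The genuine gap is in the central analytic step as you describe it. Expanding $\mathcal U(\theta,t)=\sum_\xi\hat{\mathcal U}_\xi(t)e^{2i\pi\xi\cdot\theta}$ does \emph{not} reduce the lifted system to ``a countable family of linear second-order ODEs with characteristic rates proportional to $|n^k\cdot\xi|$'': the coefficients $A(\theta)$ are $\theta$-dependent, so the Fourier modes are coupled through the convolution with $\hat A$, and the mode-wise exponential decay $e^{-c|n^k\cdot\xi|t}$ you invoke is simply not available (it would hold only for constant coefficients). In \cite{dgvnm} the decoupling is never used; instead, well-posedness and decay are obtained from weighted energy estimates for the degenerate system, and the small-divisors hypothesis enters when inverting tangential derivative operators (small denominators $n^k\cdot\xi$, with the loss of $l$ derivatives absorbed by the $C^\infty$ smoothness of $A$ and $\chi^\alpha$), which is precisely why the conclusion is decay faster than any polynomial rather than exponential. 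Since your proposal's only concrete mechanism for \eqref{inegdecVkalphaeps} is the false decoupling, and you explicitly defer ``the technical heart'' back to \cite{dgvnm}, the proposal does not constitute a proof: either cite the result, as the paper does, or replace the ODE heuristic by the actual weighted-energy/small-divisor machinery of \cite{dgvnm}. You should also be careful with point (1) of the statement: the claimed $\mathbb T$-periodicity in $z_1$ is a feature of how the profile is set up in the rational/lifted formulation and does not follow from your restriction ansatz as written, since $M^ke_1\notin\mathbb Z^2$ under {\bf (DIV)}.
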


\begin{rem}
Note that \eqref{inegdecVkalphaeps} is true not only for $m\in\mathbb N$ but for $m\in\mathbb R$, $m>0$. Indeed, $[m]$ denoting the integer part of $m$, we have 
\begin{align*}
&\left|z_2-\frac{c^k}{\varepsilon}\right|^m<\left|z_2-\frac{c^k}{\varepsilon}\right|^{[m]+1}\mbox{, if } \left|z_2-\frac{c^k}{\varepsilon}\right|>1\\
&\left|z_2-\frac{c^k}{\varepsilon}\right|^m\leq 1 \mbox{, if } \left|z_2-\frac{c^k}{\varepsilon}\right|\leq 1.
\end{align*}
\end{rem}

\begin{rem}
If we rewrite the second statement of theorem \ref{theodecbldiv} in terms of $v_{bl}^{k,\alpha,\varepsilon}$ instead of $V^{k,\alpha,\varepsilon}$ we get:
for all $\beta\in\mathbb N^2$, for all $m\in\mathbb N$, there is a constant $C_{|\beta|,m}>0$ satisfying for all $\varepsilon>0$ and $y\in\Omega^{k,\varepsilon}$,
\begin{equation}\label{inegdecvblkalphaeps}
\Bigl(1+\begin{vmatrix}y\cdot n^k-\frac{c^k}{\varepsilon}\end{vmatrix}^m\Bigr)
\begin{vmatrix}
\partial_y^\beta\bigl(v_{bl}^{k,\alpha,\varepsilon}(y)-V^{k,\alpha,*}\bigr)
\end{vmatrix}\leq C_{|\beta|,m}.
\end{equation}
\end{rem}

\begin{rem}
If instead of {\bf (DIV)} we assume {\bf (RAT)}, the boundary layer tails $V^{k,\alpha,\varepsilon}$ still exist. Furthermore, an equivalent of theorem \ref{theodecbldiv} states: there exists a sequence $(\varepsilon_n)$ (here lies the main difference between the two assumptions), a constant matrix $V^{k,\alpha,*}\in M_N(\mathbb R)$ such that for all $m\in\mathbb N$, for all $z_2>\frac{c^k}{\varepsilon_n}$,
\begin{equation*}
\Bigl(1+\begin{vmatrix}z_2-\frac{c^k}{\varepsilon_n}\end{vmatrix}^m\Bigr)\sup_{z_1\in\mathbb R}
\begin{vmatrix}
\partial_{z}^\beta\bigl(V^{k,\alpha,\varepsilon_n}(z_1,z_2)-V^{k,\alpha,*}\bigr)
\end{vmatrix}\leq C_{|\beta|,m}.
\end{equation*}
The latter is sufficient to get our results. However, Moskow and Vogelius in \cite{moscovog}, as well as Allaire and Amar in \cite{allam} manage to prove an improved result under assumption {\bf (RAT)}: the convergence of the boundary layer towards its tail is exponential.
\end{rem}

\emph{Assume from now on that $\Omega$ satisfies {\bf (DIV)}. Let $0<\omega<1$ be fixed. The assumptions $u^0\in H^{2+\omega}(\Omega)$ with $0<\omega<1$ and $u^0\in H^3(\Omega)\cap C^2(\overline{\Omega})$ are treated in parallel.  
In both cases, by Sobolev injection, $u^0\in C^1(\overline{\Omega})$.}

\subsubsection{Well-posedness of \eqref{sysoscbordhomdiv}}
It is enough to prove that the boundary function of \eqref{sysoscbordhomdiv} belongs to $H^{\frac{1}{2}}(\partial\Omega)$. One constructs a lifting $\phi^*_{bl}$ of $\varphi^*_{bl}:=-V^{k,\alpha,*}\partial_{x_\alpha}u^0(x)$. There exists $G=\left(G^1,G^2\right)\in M_N(\mathbb R)\times M_N(\mathbb R)$ such that
\begin{equation}\label{liftingphi*}
\phi^*_{bl}=G^\alpha\partial_{x_\alpha} u^0.
\end{equation}
Following \cite{dgvnm}, one can show:
\begin{prop}\label{propboundarydatahomoH12}
If $u^0\in H^2(\Omega)$, then $\phi^*_{bl}\in H^1(\Omega)$. If $u^0$, in addition, belongs to $H^3(\Omega)$, then $\phi^*_{bl}$ belongs to $H^2(\Omega)$.
\end{prop}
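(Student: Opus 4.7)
The statement is essentially immediate from the representation \eqref{liftingphi*}. Since $G=(G^1,G^2)$ is a pair of \emph{constant} matrices in $M_N(\mathbb R)$, the lifting $\phi^*_{bl}=G^\alpha\partial_{x_\alpha}u^0$ is nothing but a linear combination, with constant matrix coefficients, of the first-order partial derivatives of $u^0$. Regularity therefore transfers from $u^0$ to $\phi^*_{bl}$ with a loss of exactly one derivative via the classical Sobolev calculus:
\begin{align*}
u^0\in H^2(\Omega)&\;\Longrightarrow\;\partial_{x_\alpha}u^0\in H^1(\Omega)\;\Longrightarrow\;\phi^*_{bl}\in H^1(\Omega),\\
u^0\in H^3(\Omega)&\;\Longrightarrow\;\partial_{x_\alpha}u^0\in H^2(\Omega)\;\Longrightarrow\;\phi^*_{bl}\in H^2(\Omega),
\end{align*}
together with the obvious quantitative bounds $\|\phi^*_{bl}\|_{H^1(\Omega)}\leq C(|G^1|+|G^2|)\|u^0\|_{H^2(\Omega)}$ and $\|\phi^*_{bl}\|_{H^2(\Omega)}\leq C(|G^1|+|G^2|)\|u^0\|_{H^3(\Omega)}$. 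There is nothing else to do at this stage.

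\textbf{Where the real work lies.} The substantive content is not the proposition itself but the existence claim immediately preceding it, namely the production of a single pair $(G^1,G^2)$ of constant matrices so that $G^\alpha\partial_{x_\alpha}u^0$ agrees on each edge $K^k$ with the piecewise-defined boundary datum $\varphi^*_{bl}(x)=-V^{k,\alpha,*}\partial_{x_\alpha}u^0(x)$. Using the Dirichlet condition $u^0=0$ on $\partial\Omega$, each tangential derivative $\partial_{\tau^k}u^0$ vanishes on $K^k$, so $\partial_{x_\alpha}u^0=n^k_\alpha\partial_{n^k}u^0$ on that edge and the matching reduces to the algebraic compatibility system
\begin{equation*}
G^\alpha n^k_\alpha=-V^{k,\alpha,*}n^k_\alpha,\qquad k=1,\ldots,M.
\end{equation*}
For $M\geq 3$ this is overdetermined and its solvability encodes a genuine homogeneity property of the boundary-layer tails $V^{k,\alpha,*}$ as a function of the half-plane normal $n^k$. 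This existence step, established in the spirit of \cite{dgvnm}, is the main obstacle; once it is granted as in the excerpt, the regularity conclusion of the proposition follows by the direct inspection above.
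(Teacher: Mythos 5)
Your proof is correct and is essentially the paper's own argument: given the representation \eqref{liftingphi*} with constant matrices $G^1,G^2\in M_N(\mathbb R)$, the regularity of $\phi^*_{bl}=G^\alpha\partial_{x_\alpha}u^0$ follows from that of $\nabla u^0$ with the stated loss of exactly one derivative. You also correctly isolate the only substantive point, namely the existence of such constant matrices matching all the tails $V^{k,\alpha,*}$ on the edges (via the vanishing of the tangential derivatives of $u^0$ on $\partial\Omega$), which is precisely what the paper itself does not reprove but imports from \cite{dgvnm}.
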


\subsubsection{Sketch of the proof of the estimates \eqref{estCVordre0divbis} and \eqref{estCVordre0div}}
Our strategy is to split the problem of estimating $\vartheta_{bl}^{\varepsilon}-\vartheta_{bl}^*$ into three easier ones. 
For this purpose, we introduce $\vartheta_{bl}^{\varepsilon,*}$ solution of
\begin{equation}\label{sysoscaux}
\left\{
\begin{array}{rll}
-\nabla \cdot A\bigl(\frac{x}{\varepsilon}\bigr)\nabla \vartheta_{bl}^{\varepsilon,*}=&0,& x\in \Omega\\
\vartheta_{bl}^{\varepsilon,*}=&-V^{k,\alpha,*}\partial_{x_\alpha}u^0(x),& x\in \partial\Omega\cap K^k, \text{ for all } 1\leq k\leq M
\end{array}
\right. 
\end{equation}
to get via the triangular inequality:
\begin{equation*}
\begin{Vmatrix}
\vartheta_{bl}^\varepsilon-\vartheta_{bl}^*
\end{Vmatrix}_{L^2(\Omega)}
\leq
\begin{Vmatrix}
\vartheta_{bl}^{\varepsilon,*}-\vartheta_{bl}^*
\end{Vmatrix}_{L^2(\Omega)}
+
\begin{Vmatrix}
\vartheta_{bl}^\varepsilon-\vartheta_{bl}^{\varepsilon,*}
\end{Vmatrix}_{L^2(\Omega)}.
\end{equation*}
Note that $\vartheta_{bl}^{\varepsilon,*}$ is well defined because of proposition \ref{propboundarydatahomoH12}. 

The study of the first term seems to be more classic as the boundary data function of \eqref{sysoscbordhomdiv} and \eqref{sysoscaux} is not oscillating. The second term, on the contrary, requires a deep knowledge about the homogenization of boundary layer systems.

In fact $\vartheta_{bl}^\varepsilon-\vartheta_{bl}^{\varepsilon,*}$ is the solution of \eqref{sysoscbord} with $\varphi(x,y)=-\bigl(\chi^\alpha(y)-V^{k,\alpha,*}\bigr)\partial_{x_\alpha}u^0(x)$, for all $x\in\partial\Omega\cap K^k$, for all $y\in\mathbb R^2$; we call $u^{1,\varepsilon}_{bl}$ the difference $\vartheta_{bl}^\varepsilon-\vartheta_{bl}^{\varepsilon,*}$. It comes from proposition \ref{propboundarydatahomoH12} that $\varphi$ defined like this is in $H^{\frac{1}{2}}(\partial\Omega)$. Let $v_{bl}^{k,\varepsilon}:=-\bigl(v_{bl}^{k,\alpha,\varepsilon}-V^{k,\alpha,*}\bigr)\partial_{x_\alpha}u^0$. 
We expect $u^{1,\varepsilon}_{bl}$ to be close to $\sum_{k=1}^Mv_{bl}^{k,\varepsilon}\bigl(\cdot,\frac{\cdot}{\varepsilon}\bigr)$:
\begin{equation*}
\begin{Vmatrix}
u^{1,\varepsilon}_{bl}
\end{Vmatrix}_{L^2(\Omega)}
\leq
\sum_{k=1}^M\begin{Vmatrix}
v^{k,\varepsilon}_{bl}\bigl(x,\frac{x}{\varepsilon}\bigr)
\end{Vmatrix}_{L^2(\Omega)}
+\begin{Vmatrix}
u^{1,\varepsilon}_{bl}(x)-\sum_{k=1}^Mv^{k,\varepsilon}_{bl}\bigl(x,\frac{x}{\varepsilon}\bigr)
\end{Vmatrix}_{L^2(\Omega)}.
\end{equation*}

The rest of the proof is thus devoted to estimate each of the terms in the r.h.s. of:
\begin{multline}\label{dectriangbis}
\begin{Vmatrix}
\vartheta_{bl}^\varepsilon-\vartheta_{bl}^*
\end{Vmatrix}_{L^2(\Omega)}
\leq
\begin{Vmatrix}
\vartheta_{bl}^{\varepsilon,*}-\vartheta_{bl}^*
\end{Vmatrix}_{L^2(\Omega)}
+\sum_{k=1}^M\begin{Vmatrix}
v^{k,\varepsilon}_{bl}\bigl(x,\frac{x}{\varepsilon}\bigr)
\end{Vmatrix}_{L^2(\Omega)}\\
+\begin{Vmatrix}
u^{1,\varepsilon}_{bl}(x)-\sum_{k=1}^Mv^{k,\varepsilon}_{bl}\bigl(x,\frac{x}{\varepsilon}\bigr)
\end{Vmatrix}_{L^2(\Omega)}.
\end{multline}

\subsubsection{First term in the r.h.s of \eqref{dectriangbis}}
We resort to corollary \ref{corestuepsu0H1omega} to estimate this term. 
In order to get some convergence rate, we need to have a little more regularity on $\vartheta_{bl}^{*}$ than $\vartheta_{bl}^{*}\in H^1(\Omega)$. According to proposition \ref{propboundarydatahomoH12}, the lifting $\phi^*_{bl}$ of the boundary data of \eqref{sysoscbordhomdiv} belongs to $H^{1+\omega}(\Omega)$ (resp. $H^2(\Omega)$), provided that $u^0$ belongs to $H^{2+\omega}(\Omega)$ (resp. $H^3(\Omega)$). 

Let us treat the two assumptions on $u^0$ separately. If $u^0\in H^{2+\omega}(\Omega)$, it follows from the first point of theorem \ref{theoregdaugekmr},
 that $\vartheta_{bl}^*$ has $H^{1+\gamma}(\Omega)$ regularity for all $\gamma$ such that $0\leq\gamma\leq\omega$ and $\gamma\neq\frac{1}{2}$. Therefore,
\begin{equation*}
\begin{Vmatrix}
\vartheta_{bl}^{\varepsilon,*}-\vartheta_{bl}^*
\end{Vmatrix}_{L^2(\Omega)}=O\bigl(\varepsilon^\frac{\gamma}{2}\bigr).
\end{equation*}
If $u^0\in H^3(\Omega)$, the second point of theorem \ref{theoregdaugekmr} yields that $\vartheta_{bl}^*\in H^2(\Omega)$. Applying corollary \ref{coruepsu0L2} implies
\begin{equation*}
\begin{Vmatrix}
\vartheta_{bl}^{\varepsilon,*}-\vartheta_{bl}^*
\end{Vmatrix}_{L^2(\Omega)}=O\bigl(\varepsilon^\frac{1}{2}\bigr).
\end{equation*}

\subsubsection{Second term in the r.h.s of \eqref{dectriangbis}}
By linearity of the equations, the boundary layer tail $V^{k,*}(x)$ of $v_{bl}^{k,\varepsilon}(x,\cdot)$ is equal to
\begin{equation*}
V^{k,*}(x)=-V^{k,\alpha,*}\partial_{x_\alpha}u^0(x)+V^{k,\alpha,*}\partial_{x_\alpha}u^0(x)=0.
\end{equation*}
We deduce from theorem \ref{theodecbldiv}: for all $m\in\mathbb N$, there is a constant $C_m>0$ such that for all $\varepsilon>0$, for all $x\in\Omega$,
\begin{equation}\label{eqmajvblkepsm}
\biggl(1+\frac{\begin{vmatrix}x\cdot n^k-c^k\end{vmatrix}^m}{\varepsilon^m}\biggr)
\begin{vmatrix}
v_{bl}^{k,\varepsilon}\bigl(x,\frac{x}{\varepsilon}\bigr)
\end{vmatrix}\leq C_{m}.
\end{equation}
The uniformity in $x$ comes from the fact $u^0\in C^1(\overline{\Omega})$ and from the boundedness of $\Omega$.

\begin{prop}\label{lemvkepsblL2}
For all $1\leq k\leq M$, $\begin{Vmatrix}
v^{k,\varepsilon}_{bl}\bigl(x,\frac{x}{\varepsilon}\bigr)
\end{Vmatrix}_{L^2(\Omega)}=O\bigl(\varepsilon^\frac{1}{2}\bigr)$.
\end{prop}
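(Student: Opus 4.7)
The plan is to convert the pointwise decay estimate \eqref{eqmajvblkepsm} into an $L^2$ bound by exploiting the fact that $v^{k,\varepsilon}_{bl}(x,x/\varepsilon)$ is concentrated in a strip of width $O(\varepsilon)$ along the $k$-th edge $K^{k}$. Writing $d^{k}(x) := n^{k}\cdot x - c^{k}\ge 0$ for the signed distance from $x\in\overline{\Omega}$ to the supporting line $K^{k}$, I would first fix an integer $m$ large enough (any $m \ge 1$ will do) and apply \eqref{eqmajvblkepsm} to obtain the pointwise bound
\begin{equation*}
\bigl|v_{bl}^{k,\varepsilon}(x, x/\varepsilon)\bigr|^{2}
\;\le\; \frac{C_{m}^{2}\,\varepsilon^{2m}}{(\varepsilon^{m}+d^{k}(x)^{m})^{2}},
\qquad x\in\Omega.
\end{equation*}
The fact that this bound is uniform in $x$ rests on two things: the uniform decay \eqref{eqmajvblkepsm} of $v^{k,\alpha,\varepsilon}_{bl} - V^{k,\alpha,*}$, and the boundedness of $\nabla u^{0}$ on $\overline{\Omega}$, which follows from the standing assumption $u^{0}\in C^{1}(\overline{\Omega})$.

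Second, I would reduce the $\Omega$-integral of the squared bound to a one-dimensional integral in the normal variable $s = d^{k}(x)$. This can be done via the change of variables $x = x_{0} + s n^{k}$, with $x_{0}$ running along $K^{k}$, or equivalently via the co-area formula. Since $\Omega$ is bounded and convex, the level slices $\{x\in\Omega : d^{k}(x) = s\}$ are line segments whose $\mathcal{H}^{1}$-length is bounded by some $L$ uniformly in $s$, and $s$ ranges over a bounded interval $[0,D]$ with $D \le \operatorname{diam}(\Omega)$.

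Third, after the scaling $s = \varepsilon t$ the remaining one-dimensional integral satisfies
\begin{equation*}
\int_{0}^{D}\frac{\varepsilon^{2m}}{(\varepsilon^{m}+s^{m})^{2}}\, ds
\;=\; \varepsilon\int_{0}^{D/\varepsilon}\frac{dt}{(1+t^{m})^{2}}
\;\le\; C\varepsilon,
\end{equation*}
the constant being finite because $m \ge 1 > 1/2$. Combining these steps gives $\|v^{k,\varepsilon}_{bl}(\cdot,\cdot/\varepsilon)\|^{2}_{L^{2}(\Omega)} \le C\varepsilon$, which is the claim. There is no substantive obstacle here: this is essentially an integration of the pointwise concentration estimate \eqref{eqmajvblkepsm}, and the only point requiring a moment's attention is the uniformity of the prefactor in $x$, which the $C^{1}(\overline{\Omega})$-regularity of $u^{0}$ ensures.
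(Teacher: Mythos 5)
Your proof is correct and follows essentially the same route as the paper: integrate the pointwise decay estimate \eqref{eqmajvblkepsm} with a fixed $m\geq 1$, reduce to a one-dimensional integral in the normal direction to $K^k$ (the paper does this via the rotation $M^k$, you via the equivalent change of variables $x=x_0+sn^k$), and observe that the resulting integral is $O(\varepsilon)$ since $2m>1$. The rescaling $s=\varepsilon t$ you use and the paper's splitting of the integral at $s=\varepsilon$ are interchangeable computations of the same quantity.
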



\begin{proof}[Proof]
Let $m\in\mathbb N$. From \eqref{eqmajvblkepsm} we get
\begin{align*}
\begin{Vmatrix}
v_{bl}^{k,\varepsilon}\bigl(x,\frac{x}{\varepsilon}\bigr)
\end{Vmatrix}_{L^2(\Omega)}^2
&\leq C\int_{\Omega}\frac{1}{\Bigl(1+\frac{|x\cdot n^k-c^k|^m}{\varepsilon^m}\Bigr)^2}dx\\
&\leq C\int_{\widetilde{\Omega}}\frac{1}{\Bigl(1+\frac{u_2^m}{\varepsilon^m}\Bigr)^2}du
\end{align*}
where $\widetilde{\Omega}:= ^t\!\!\!M^k\Omega-\begin{pmatrix}0\\ c^k\end{pmatrix}$.
Therefore, we have to focus on
\begin{equation*}
\int_{[0,\infty[}\frac{1}{\Bigl(1+\frac{u_2^m}{\varepsilon^m}\Bigr)^2}du_2=\varepsilon^{2m}\int_{[0,\infty[}\frac{1}{\bigl(\varepsilon^{m}+u_2^m\bigr)^2}du_2.
\end{equation*}
For $2m>1$ the integral is convergent and
\begin{equation*}
\int_{[0,\infty[}\frac{1}{\bigl(\varepsilon^{m}+u_2^m\bigr)^2}du_2\leq \int_{[0,\varepsilon]}\frac{1}{\varepsilon^{2m}}+\int_{[\varepsilon,\infty[}\frac{1}{u_2^{2m}}du_2=O\bigl(\varepsilon^{-2m+1}\bigr).
\end{equation*}
We immediately deduce that 
\begin{equation*}
\begin{Vmatrix}
v_{bl}^{k,\varepsilon}\bigl(x,\frac{x}{\varepsilon}\bigr)
\end{Vmatrix}_{L^2(\Omega)}^2=O(\varepsilon)
\end{equation*}
which yields the result.
\end{proof}

\begin{rem}
It is easy to adapt the proof of proposition \ref{lemvkepsblL2} to get: for all $1\leq k\leq M$, for all $1\leq p\leq\infty$, $\begin{Vmatrix}
v^{k,\varepsilon}_{bl}\bigl(x,\frac{x}{\varepsilon}\bigr)
\end{Vmatrix}_{L^p(\Omega)}=O\bigl(\varepsilon^\frac{1}{p}\bigr)$.
For $p=\infty$ it is \eqref{eqmajvblkepsm} with $m=0$; for $1\leq p<\infty$ the proof follows the lines of the case $p=2$, except that one has to replace $2$ by $p$. In the same manner, it is very straightforward to deduce from \eqref{inegdecvblkalphaeps} that for all $1\leq p\leq\infty$, for all $\beta\in \mathbb N^d$, for all $m\in\mathbb N$,
\begin{equation}\label{ineqvblkepsalphaLp}
\begin{Vmatrix}
\partial_{y}^\beta\Bigl(v^{k,\alpha,\varepsilon}_{bl}\bigl(\frac{x}{\varepsilon}\bigr)-V^{k,\alpha,*}\Bigr)
\frac{\left|x\cdot n^k-c^k\right|^m}{\varepsilon^m}
\end{Vmatrix}_{L^p(\Omega)}=O\bigl(\varepsilon^\frac{1}{p}\bigr).
\end{equation}
\end{rem}

\subsubsection{Third term in the r.h.s of \eqref{dectriangbis}}

We proceed as usual by carrying out energy estimates on the error
\begin{equation*}
e^\varepsilon_{bl}:=u^{1,\varepsilon}_{bl}(x)-\sum_{k=1}^Mv^{k,\varepsilon}_{bl}\Bigl(x,\frac{x}{\varepsilon}\Bigr).
\end{equation*}
It solves the system 
\begin{equation*}
\left\{
\begin{array}{rll}
-\nabla \cdot A\bigl(\frac{x}{\varepsilon}\bigr)\nabla e_{bl}^{\varepsilon}=&r_{bl}^\varepsilon,& x\in \Omega\\
e_{bl}^{\varepsilon}=&\varphi_{bl}^\varepsilon,& x\in \partial\Omega
\end{array}
\right. 
\end{equation*}
where the source term 
\begin{equation*}
r_{bl}^\varepsilon:=\sum_{k=1}^M\biggl\{\nabla\cdot\biggl(A\Bigl(\frac{x}{\varepsilon}\Bigr)\nabla_xv_{bl}^{k,\varepsilon}\Bigl(x,\frac{x}{\varepsilon}\Bigr)\biggr)+\frac{1}{\varepsilon}\Bigl[\nabla_x\cdot A(y)\nabla_yv_{bl}^{k,\varepsilon}\Bigr]\Bigl(x,\frac{x}{\varepsilon}\Bigr)\biggr\}
\end{equation*}
and the piecewise defined boundary function
\begin{equation*}
\left.\varphi_{bl}^\varepsilon\right|_{\partial\Omega\cap K^k}:=-\biggl(\chi^\alpha\Bigl(\frac{x}{\varepsilon}\Bigr)-V^{k,\alpha,*}\biggr)\partial_{x_\alpha}u^0(x)-\sum_{k'=1}^Mv^{k',\varepsilon}_{bl}\Bigl(x,\frac{x}{\varepsilon}\Bigr)=-\sum_{k'\neq k}^Mv^{k',\varepsilon}_{bl}\Bigl(x,\frac{x}{\varepsilon}\Bigr).
\end{equation*}
We estimate separately $r_{bl}^\varepsilon$ (cf. lemma \ref{lemrblepsH-1}) and $\varphi_{bl}^\varepsilon$ (cf. lemma \ref{lemphiblepsH1/2}).

\begin{lem}\label{lemrblepsH-1}
The source term $r_{bl}^\varepsilon$ is 
\begin{enumerate}
\item of order $O\bigl(\varepsilon^\gamma)$ in $H^{-1}(\Omega)$ for all $0<\gamma<\frac{\omega}{2}$ if $u^0\in H^{2+\omega}(\Omega)$;
\item of order $O\bigl(\varepsilon^\frac{1}{2}\bigr)$ in $H^{-1}(\Omega)$ if $u^0\in C^2(\overline{\Omega})$.
\end{enumerate}
\end{lem}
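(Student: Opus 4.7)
The starting observation is that, by the chain rule and since $v_{bl}^{k,\alpha,\varepsilon}$ solves the homogeneous cell-type equation $-\nabla_y\cdot A(y)\nabla_y v_{bl}^{k,\alpha,\varepsilon}=0$, the \emph{two} terms defining $r_{bl}^\varepsilon$ can be packaged as a single full divergence
\begin{equation*}
r_{bl}^{k,\varepsilon}=\nabla\cdot\Bigl[A\bigl(\tfrac{x}{\varepsilon}\bigr)\nabla\bigl(v_{bl}^{k,\varepsilon}(x,\tfrac{x}{\varepsilon})\bigr)\Bigr],
\end{equation*}
the missing $\varepsilon^{-2}$ contribution being identically zero. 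Now exploit the product structure $v_{bl}^{k,\varepsilon}(x,y)=-\bigl(v_{bl}^{k,\alpha,\varepsilon}(y)-V^{k,\alpha,*}\bigr)\partial_{x_\alpha}u^0(x)$ and introduce $W^\varepsilon_\alpha(x):=v_{bl}^{k,\alpha,\varepsilon}(x/\varepsilon)-V^{k,\alpha,*}$, which inherits $\nabla\cdot\bigl[A(x/\varepsilon)\nabla W^\varepsilon_\alpha\bigr]=0$ pointwise. Expanding and using this PDE yields the decomposition
\begin{equation*}
r_{bl}^{k,\varepsilon}=-A\bigl(\tfrac{x}{\varepsilon}\bigr)\nabla W^\varepsilon_\alpha\cdot\nabla\partial_{x_\alpha}u^0-\nabla\cdot\bigl[A\bigl(\tfrac{x}{\varepsilon}\bigr)W^\varepsilon_\alpha\nabla\partial_{x_\alpha}u^0\bigr],
\end{equation*}
which separates the source into a manifestly divergence-form remainder and a leftover bulk term.

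The divergence-form remainder is bounded in $H^{-1}$ by $\|A(x/\varepsilon)W^\varepsilon_\alpha\nabla^2 u^0\|_{L^2(\Omega)}$. Applying H\"older together with \eqref{ineqvblkepsalphaLp}: in the case $u^0\in C^2(\overline{\Omega})$, pair $\|W^\varepsilon_\alpha(\cdot/\varepsilon)\|_{L^2(\Omega)}=O(\varepsilon^{1/2})$ with $\|\nabla^2 u^0\|_{L^\infty}$; in the case $u^0\in H^{2+\omega}(\Omega)$, for any $0<\omega'<\omega$ use the 2D Sobolev embedding $H^{\omega'}(\Omega)\hookrightarrow L^{2/(1-\omega')}(\Omega)$ to place $\nabla^2 u^0\in L^{2/(1-\omega')}$, and pair with $\|W^\varepsilon_\alpha(\cdot/\varepsilon)\|_{L^{2/\omega'}(\Omega)}=O(\varepsilon^{\omega'/2})$, giving an overall $O(\varepsilon^{\omega'/2})$ bound for every $\omega'<\omega$.

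The leftover bulk term, which contains the alarming factor $\nabla W^\varepsilon_\alpha=\varepsilon^{-1}(\nabla_y v_{bl}^{k,\alpha,\varepsilon})(x/\varepsilon)$, is treated by a stream-function trick. The field $y\mapsto A(y)\nabla_y v_{bl}^{k,\alpha,\varepsilon}(y)$ is divergence-free on the half-plane $\Omega^{k,\varepsilon}$ (by the PDE) and by theorem~\ref{theodecbldiv} it decays sub-polynomially away from $\partial\Omega^{k,\varepsilon}$; an explicit primitive
\begin{equation*}
\psi^{k,\alpha,\varepsilon}(y_1,y_2):=\int_{y_2}^{\infty}[A(y)\nabla_y v_{bl}^{k,\alpha,\varepsilon}]_1(y_1,s)\,ds
\end{equation*}
(after rotating so that $n^k=e_2$) provides a stream function satisfying $A(y)\nabla_y v_{bl}^{k,\alpha,\varepsilon}(y)=\nabla_y^\perp\psi^{k,\alpha,\varepsilon}(y)$ and inheriting the same uniform decay. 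Testing the leftover against $\phi\in H^1_0(\Omega)$ and substituting turns the factor $\varepsilon^{-1}$ into a genuine $x$-gradient, namely $A^{\beta\gamma}(x/\varepsilon)\partial_\gamma W^\varepsilon_\alpha=\varepsilon_{\beta\delta}\,\partial_{x_\delta}[\psi^{k,\alpha,\varepsilon}(x/\varepsilon)]$. One further integration by parts in $x_\delta$ produces a boundary integral that vanishes because $\phi|_{\partial\Omega}=0$, and two bulk integrals; the one pairing $\varepsilon_{\beta\delta}$ with the symmetric $\partial_\beta\partial_\delta\phi$ vanishes identically by antisymmetry-symmetry contraction, leaving
\begin{equation*}
\int_\Omega\varepsilon_{\beta\delta}\,\psi^{k,\alpha,\varepsilon}(x/\varepsilon)\,\partial_\beta\partial_\alpha u^0(x)\,\partial_\delta\phi(x)\,dx,
\end{equation*}
which is estimated by the same H\"older/Sobolev combination as the divergence-form piece, because $\|\psi^{k,\alpha,\varepsilon}(\cdot/\varepsilon)\|_{L^p(\Omega)}=O(\varepsilon^{1/p})$ by the same argument as in proposition~\ref{lemvkepsblL2}.

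The main obstacle is indeed the cancellation of the $\varepsilon^{-1}$ factor; once it is absorbed by the combined use of the PDE satisfied by $W^\varepsilon_\alpha$ and of the half-plane stream function, the rest is standard interpolation between the boundary-layer decay and the Sobolev regularity of $u^0$. Summing over the finitely many indices $k=1,\dots,M$ concludes the proof with the claimed rates $O(\varepsilon^\gamma)$ for any $\gamma<\omega/2$ (when $u^0\in H^{2+\omega}(\Omega)$) and $O(\varepsilon^{1/2})$ (when $u^0\in C^2(\overline{\Omega})$); the strict inequality in the fractional case is only needed to stay away from the critical Sobolev exponent.
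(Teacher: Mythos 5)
Your decomposition and both estimates are correct, and your treatment of the divergence-form piece (the term carrying $W^\varepsilon_\alpha\nabla\partial_{x_\alpha}u^0$) is exactly the paper's: $H^{-1}$ duality, then H\"older pairing the boundary-layer decay \eqref{ineqvblkepsalphaLp} in $L^{\widehat{q}}$ against $\nabla^2u^0\in L^q$ via the Sobolev embedding $H^\omega(\Omega)\hookrightarrow L^q(\Omega)$, $q<\frac{2}{1-\omega}$. Where you genuinely diverge is on the singular term carrying $\nabla W^\varepsilon_\alpha=\varepsilon^{-1}(\nabla_yv_{bl}^{k,\alpha,\varepsilon})(x/\varepsilon)$. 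The paper does not use the equation satisfied by $v_{bl}^{k,\alpha,\varepsilon}$ at all here: it absorbs the $\varepsilon^{-1}$ by inserting the weight $\frac{|x\cdot n^k-c^k|}{\varepsilon}$ (the $m=1$ case of \eqref{ineqvblkepsalphaLp}) and controls the leftover $\frac{\phi}{|x\cdot n^k-c^k|}\leq\frac{\phi}{d(x,\partial\Omega)}$ by Hardy's inequality. You instead exploit that $y\mapsto A(y)\nabla_yv_{bl}^{k,\alpha,\varepsilon}$ is divergence-free and decaying in the half-plane, build a stream function (the half-plane analogue of lemma \ref{lemmedivrotd2}, made possible by theorem \ref{theodecbldiv}), convert $\varepsilon^{-1}\nabla_y$ into a genuine $\nabla_x^\perp$, and integrate by parts. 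Both routes land on the same H\"older/Sobolev pairing and the same rates; the paper's is shorter (no auxiliary construction, and it works directly for every $\phi\in H^1_0(\Omega)$), while yours is structurally parallel to the proof of proposition \ref{theoenestH1} and makes the full divergence structure of $r_{bl}^{\varepsilon,k}$ explicit. Two small repairs to your version: (i) after the integration by parts in $x_\delta$, the bulk term that dies by the antisymmetric--symmetric contraction is the one carrying $\epsilon_{\beta\delta}\partial_{x_\delta}\partial_{x_\beta}\partial_{x_\alpha}u^0$, not one carrying $\partial_\beta\partial_\delta\phi$; and (ii) since that transient term involves third derivatives of $u^0$, which for $u^0\in H^{2+\omega}(\Omega)$ are only distributions, you should either perform the two integrations by parts in the other order (first writing $\nabla^\perp\Psi\cdot\nabla\partial_{x_\alpha}u^0=\nabla\cdot(\partial_{x_\alpha}u^0\,\nabla^\perp\Psi)$ and moving that divergence onto $\phi$) or argue by density with $\phi\in C^\infty_c(\Omega)$ and a mollification of $u^0$, noting that the surviving term only sees $\nabla^2u^0$ and $\nabla\phi$. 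With these adjustments the argument is complete.
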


\begin{proof}[Proof]
Assume $u^0\in H^{2+\omega}(\Omega)$ (resp. $u^0\in C^2(\overline{\Omega}$)). Let $1\leq k\leq M$ be fixed and consider 
\begin{equation*}
r_{bl}^{\varepsilon,k}:=\nabla\cdot\biggl(A\Bigl(\frac{x}{\varepsilon}\Bigr)\nabla_xv_{bl}^{k,\varepsilon}\Bigl(x,\frac{x}{\varepsilon}\Bigr)\biggr)+\frac{1}{\varepsilon}\Bigl[\nabla_x\cdot A(y)\nabla_yv_{bl}^{k,\varepsilon}\Bigr]\Bigl(x,\frac{x}{\varepsilon}\Bigr).
\end{equation*}

We focus on the first term of $r_{bl}^{\varepsilon,k}$. 
For all $\phi\in H^1_0(\Omega)$, 
\begin{align*}
&\left|\left\langle\nabla\cdot\biggl(A\Bigl(\frac{x}{\varepsilon}\Bigr)\nabla_xv_{bl}^{k,\varepsilon}\Bigl(x,\frac{x}{\varepsilon}\Bigr)\biggr),\phi(x)\right\rangle_{H^{-1}(\Omega),H^1_0(\Omega)}\right|\\
&\qquad\qquad=\left|\int_{\Omega}\biggl(A\Bigl(\frac{x}{\varepsilon}\Bigr)\nabla_xv_{bl}^{k,\varepsilon}\Bigl(x,\frac{x}{\varepsilon}\Bigr)\biggr)\nabla\phi(x)dx\right|\\
&\qquad\qquad\leq \begin{Vmatrix}
A\bigl(\frac{x}{\varepsilon}\bigr)\nabla_xv_{bl}^{k,\varepsilon}\bigl(x,\frac{x}{\varepsilon}\bigr)
\end{Vmatrix}_{L^2(\Omega)}
\begin{Vmatrix}
\nabla\phi
\end{Vmatrix}_{L^2(\Omega)}\\
&\qquad\qquad\leq C\begin{Vmatrix}
\nabla_xv_{bl}^{k,\varepsilon}\bigl(x,\frac{x}{\varepsilon}\bigr)
\end{Vmatrix}_{L^2(\Omega)}
\begin{Vmatrix}
\phi
\end{Vmatrix}_{H^1_0(\Omega)}.
\end{align*}
At this point we need to estimate $\begin{Vmatrix}
\nabla_xv_{bl}^{k,\varepsilon}\bigl(x,\frac{x}{\varepsilon}\bigr)
\end{Vmatrix}_{L^2(\Omega)}$. As 
\begin{equation*}
\nabla_xv_{bl}^{k,\varepsilon}\bigl(x,\frac{x}{\varepsilon}\bigr)=\Bigl(v^{k,\beta,\varepsilon}_{bl}\bigl(\frac{x}{\varepsilon}\bigr)-V^{k,\beta,*}\Bigr)\partial_{x_\alpha}\partial_{x_\beta}u^0,
\end{equation*}
the idea is to bound the $L^2(\Omega)$ norm of this term using a H\"older inequality and \eqref{ineqvblkepsalphaLp}. Doing so, one has to pay attention to the regularity of $u^0$, and to carefully choose the $L^p(\Omega)$ spaces involved.

If $u^0\in C^2(\overline{\Omega})$, 
\begin{equation*}
\begin{Vmatrix}
\Bigl(v^{k,\beta,\varepsilon}_{bl}\bigl(\frac{x}{\varepsilon}\bigr)-V^{k,\beta,*}\Bigr)\partial_{x_\alpha}\partial_{x_\beta}u^0
\end{Vmatrix}_{L^2(\Omega)}\leq 
\begin{Vmatrix}
v^{k,\beta,\varepsilon}_{bl}\bigl(\frac{x}{\varepsilon}\bigr)-V^{k,\beta,*}
\end{Vmatrix}_{L^2(\Omega)}
\begin{Vmatrix}
\partial_{x_\alpha}\partial_{x_\beta}u^0
\end{Vmatrix}_{L^\infty(\Omega)}.
\end{equation*}
The assumption $u^0\in C^2(\overline{\Omega})$ plays here the same role as $u^0\in C^1(\overline{\Omega})$ for \eqref{eqmajvblkepsm}. Use \eqref{ineqvblkepsalphaLp} with $p=2$ to conclude.

If $u^0\in H^{2+\omega}(\Omega)$, we cannot proceed as above because $\partial_{x_\alpha}\partial_{x_\beta}u^0$ does not belong to $L^\infty(\Omega)$. By the Sobolev injections, $H^\omega(\Omega)$ is continuously embedded in $L^q(\Omega)$ for all $1\leq q<\frac{2}{1-\omega}$. Yet $\partial_{x_\alpha}\partial_{x_\beta}u^0$ is in $H^\omega(\Omega)$. Take $2\leq q<\frac{2}{1-\omega}$ and $\widehat{q}\geq 2$ such that $\frac{1}{q}+\frac{1}{\widehat{q}}=\frac{1}{2}$. Necessarily $\frac{2}{\omega}<\widehat{q}$. H\"older's inequality yields
\begin{equation*}
\begin{Vmatrix}
\Bigl(v^{k,\beta,\varepsilon}_{bl}\bigl(\frac{x}{\varepsilon}\bigr)-V^{k,\beta,*}\Bigr)\partial_{x_\alpha}\partial_{x_\beta}u^0
\end{Vmatrix}_{L^2(\Omega)}\\
\leq 
\begin{Vmatrix}
v^{k,\beta,\varepsilon}_{bl}\bigl(\frac{x}{\varepsilon}\bigr)-V^{k,\beta,*}
\end{Vmatrix}_{L^{\widehat{q}}(\Omega)}
\begin{Vmatrix}
\partial_{x_\alpha}\partial_{x_\beta}u^0
\end{Vmatrix}_{L^q(\Omega)}.
\end{equation*}
Apply now \eqref{ineqvblkepsalphaLp} with $p=\widehat{q}$ to get 
$\begin{Vmatrix}
\nabla_xv^{k,\varepsilon}_{bl}\bigl(x,\frac{x}{\varepsilon}\bigr)
\end{Vmatrix}_{L^2(\Omega)}=O\bigl(\varepsilon^\frac{1}{\widehat{q}}\bigr)$.

The second term of $r_{bl}^{\varepsilon,k}$ needs to be treated differently. The key ingredient is Hardy's inequality: for all $\phi\in H^1_0(\Omega)$
\begin{equation*}
\begin{Vmatrix}
\frac{\phi(x)}{d(x,\partial\Omega)}
\end{Vmatrix}_{L^2(\Omega)}
\leq \begin{Vmatrix}
\nabla\phi
\end{Vmatrix}_{L^2(\Omega)}
\end{equation*}
where $d(x,\partial\Omega)$ is the distance from $x$ to $\partial\Omega$. Let $\phi\in H^1_0(\Omega)$. 
For all $2\leq q,\widehat{q}\leq\infty$ such that $\frac{1}{q}+\frac{1}{\widehat{q}}=2$,
\begin{align*}
&\left|\int_{\Omega}\Bigl[\nabla_x\cdot A(y)\nabla_yv_{bl}^{k,\varepsilon}\Bigr]\Bigl(x,\frac{x}{\varepsilon}\Bigr)\phi(x)dx\right|\\
&\qquad\qquad\leq C\varepsilon\begin{Vmatrix}
\partial_{y_\beta}v_{bl}^{k,\gamma,\varepsilon}\bigl(\frac{x}{\varepsilon}\bigr)\partial_{x_\alpha}\partial_{x_\gamma}u^0(x)\frac{\left|x\cdot n^k-c^k\right|}{\varepsilon}
\end{Vmatrix}_{L^2(\Omega)}\begin{Vmatrix}
\frac{\phi(x)}{d(x,\partial\Omega)}
\end{Vmatrix}_{L^2(\Omega)}\\
&\qquad\qquad\leq C\varepsilon\begin{Vmatrix}
\partial_{y_\beta}v_{bl}^{k,\gamma,\varepsilon}\bigl(\frac{x}{\varepsilon}\bigr)\frac{\left|x\cdot n^k-c^k\right|}{\varepsilon}
\end{Vmatrix}_{L^{\widehat{q}}(\Omega)}
\begin{Vmatrix}
\partial_{x_\alpha}\partial_{x_\gamma}u^0
\end{Vmatrix}_{L^{q}(\Omega)}\begin{Vmatrix}
\nabla\phi
\end{Vmatrix}_{L^2(\Omega)}.
\end{align*}
If $u^0\in H^{2+\omega}(\Omega)$, then take $2\leq q<\frac{2}{1-\omega}$ and apply \eqref{ineqvblkepsalphaLp} with $p=\widehat{q}$ and $m=1$. If $u^0\in C^2(\overline{\Omega})$, then take $q=\infty$ and $\widehat{q}=2$ and apply \eqref{ineqvblkepsalphaLp} with $p=2$ and $m=1$.
\end{proof}

\begin{lem}\label{lemphiblepsH1/2}
The boundary function $\varphi_{bl}^\varepsilon$ is
\begin{enumerate}
\item of order $O\bigl(\varepsilon^{\omega})$ in $W^{1-\frac{1}{p},p}(\partial\Omega)$ for all $1\leq p<2$, if $u^0\in H^{2+\omega}(\Omega)$;
\item of order $O(\varepsilon)$ in $W^{1-\frac{1}{p},p}(\partial\Omega)$ for all $1\leq p<2$, if $u^0\in H^3(\Omega)\cap C^2(\overline{\Omega})$.
\end{enumerate}
\end{lem}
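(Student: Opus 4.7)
The strategy is to exploit the concentration of each $v_{bl}^{k',\varepsilon}(\cdot,\cdot/\varepsilon)$ near the vertex $p^{k,k'}=K^k\cap K^{k'}$. For $K^{k'}$ non-adjacent to $K^k$, convexity of $\Omega$ gives $\mathrm{dist}(K^k,K^{k'})\geq d_0>0$, and \eqref{inegdecvblkalphaeps} with arbitrarily large $m$ yields a contribution of order $O(\varepsilon^M)$ for every $M$; hence only adjacent edges really matter, producing a bump of width $O(\varepsilon)$ on $K^k$ near the shared vertex.

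The crucial ingredient is a \emph{vertex cancellation}: at each vertex $p$ of $\Omega$, $\nabla u^0(p)=0$. Indeed, under $u^0\in H^{2+\omega}(\Omega)$ (resp. $u^0\in C^2(\overline{\Omega})$), the Sobolev embedding yields $u^0\in C^{1+\omega'}(\overline{\Omega})$ for some $\omega'>0$ (resp. $u^0\in C^2$), so $\nabla u^0(p)$ is a well-defined vector; since $u^0\equiv 0$ on each of the two non-parallel edges meeting at $p$, both tangential derivatives vanish at $p$, and therefore $\nabla u^0(p)=0$. Combined with the H\"older regularity $\nabla u^0\in C^\omega(\overline{\Omega})$ (resp. $\nabla u^0\in C^1$), this gives the local estimate $|\nabla u^0(x)|\leq C|x-p|^\omega$ (resp. $C|x-p|$) in a neighbourhood of each vertex.

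Parameterizing $K^k$ by arc length $s$ from $p^{k,k'}$ (so $d^{k'}(x(s))=s\sin\theta_{k,k'}$), the bound \eqref{inegdecvblkalphaeps} and the vertex estimate combine to give, for each $m\in\mathbb N$,
\begin{equation*}
\bigl|v_{bl}^{k',\varepsilon}(x(s),x(s)/\varepsilon)\bigr|\leq C_m\,\frac{s^\omega}{1+(s/\varepsilon)^m}
\end{equation*}
(with $s^\omega$ replaced by $s$ in case~(2)). A rescaling $s=\varepsilon t$, in the same spirit as proposition~\ref{lemvkepsblL2}, then produces
\begin{equation*}
\|\varphi_{bl}^\varepsilon\|_{L^p(\partial\Omega)}=O\bigl(\varepsilon^{\omega+1/p}\bigr)
\end{equation*}
in case~(1) and $O(\varepsilon^{1+1/p})$ in case~(2). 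For the tangential derivative $\partial_\tau\varphi_{bl}^\varepsilon$, differentiating $v_{bl}^{k',\varepsilon}(x(s),x(s)/\varepsilon)$ in $s$ produces, as in lemma~\ref{lemrblepsH-1}, a $\nabla_y$-term of size $\tfrac{1}{\varepsilon}$ times the amplitude $|\nabla u^0|\leq Cs^\omega$, together with a $\nabla_x$-term carrying $\nabla^2u^0$. Treating the latter with H\"older's inequality with exponents $q<2/(1-\omega)$ and $\widehat{q}$ (using $\nabla^2 u^0\in L^q(\Omega)$ by the Sobolev embedding $H^\omega\hookrightarrow L^q$), exactly as in the proof of lemma~\ref{lemrblepsH-1}, one obtains $\|\partial_\tau\varphi_{bl}^\varepsilon\|_{L^p(\partial\Omega)}=O(\varepsilon^{\omega+1/p-1})$, respectively $O(\varepsilon^{1/p})$ in case~(2).

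The conclusion follows from real interpolation: identifying $W^{1-1/p,p}(\partial\Omega)$ with the real interpolation space $(L^p(\partial\Omega),W^{1,p}(\partial\Omega))_{1-1/p,p}$ and applying the standard interpolation inequality,
\begin{equation*}
\|\varphi_{bl}^\varepsilon\|_{W^{1-1/p,p}(\partial\Omega)}\leq C\,\|\varphi_{bl}^\varepsilon\|_{L^p(\partial\Omega)}^{1/p}\|\varphi_{bl}^\varepsilon\|_{W^{1,p}(\partial\Omega)}^{1-1/p}=O\bigl(\varepsilon^{\omega+2/p-1}\bigr),
\end{equation*}
which is dominated by $O(\varepsilon^\omega)$ for every $p\in[1,2)$ since $2/p-1>0$; the $C^2$ case analogously yields $O(\varepsilon^{2/p})=O(\varepsilon)$. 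The main obstacle is the vertex cancellation $\nabla u^0(p)=0$: without it, the bump has amplitude $O(1)$ and the interpolation returns only $O(\varepsilon^{2/p-1})$, which degenerates to $O(1)$ as $p\to 2^-$. The cancellation is available precisely thanks to the $H^{2+\omega}$ regularity of $u^0$ granted by theorem~\ref{theoregdaugekmr} on a convex polygon.
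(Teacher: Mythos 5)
Your overall strategy coincides with the paper's: localize near the vertices using the rapid decay \eqref{inegdecvblkalphaeps}, exploit the vanishing of $\nabla u^0$ at each vertex (which you rightly make explicit, and which the paper uses implicitly when asserting $\frac{\nabla u^0}{r^\omega}\in L^\infty(\partial\Omega)$ from the embedding $H^{2+\omega}\hookrightarrow C^{1,\omega}$ together with $\varphi_0=0$), and reach the fractional norm by interpolating between an $L^p$ and a $W^{1,p}$ bound on the boundary. Your exponent bookkeeping ($O(\varepsilon^{\omega+1/p})$ in $L^p$, $O(\varepsilon^{\omega-1+1/p})$ at the level of one derivative, hence $O(\varepsilon^{\omega-1+2/p})\subset O(\varepsilon^{\omega})$ for $p<2$) matches \eqref{estinterLpromegaV}--\eqref{estinterW1promegaV}.

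There is, however, a genuine gap in your treatment of the $W^{1,p}(\partial\Omega)$ bound in case (1). Differentiating $\varphi^\varepsilon_{bl}=V\bigl(\frac{\cdot}{\varepsilon}\bigr)\nabla u^0$ tangentially produces the term $V\bigl(\frac{\cdot}{\varepsilon}\bigr)\partial_\tau\nabla u^0$ \emph{on the boundary}, and you propose to control it by H\"older's inequality using $\nabla^2u^0\in L^q(\Omega)$, ``exactly as in the proof of lemma \ref{lemrblepsH-1}''. But that lemma estimates volume integrals over $\Omega$, where $\nabla^2u^0\in L^q(\Omega)$ is meaningful; here you need $\nabla^2u^0$ restricted to $\partial\Omega$ in some $L^q(\partial\Omega)$, and an $L^q(\Omega)$ function has no boundary trace. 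More precisely, $\nabla u^0\in H^{1+\omega}(\Omega)$ has trace in $H^{\frac{1}{2}+\omega}(\partial\Omega)$, whose tangential derivative is of negative order when $\omega\leq\frac{1}{2}$ (and theorem \ref{theoregdaugekmr} only guarantees \emph{some} $\omega>0$), so $\|\partial_\tau\varphi^\varepsilon_{bl}\|_{L^p(\partial\Omega)}$ need not even be finite for all $p<2$, and the interpolation inequality you invoke is then vacuous. This is exactly the difficulty the paper's proof is built to avoid: it never differentiates $\nabla u^0$ on the boundary. Instead it factorizes $\psi^2\varphi^\varepsilon_{bl}=\bigl(\psi r^\omega V\bigl(\frac{\cdot}{\varepsilon}\bigr)\bigr)\cdot\bigl(\psi\frac{\nabla u^0}{r^\omega}\bigr)$, estimates the smooth oscillating factor in $L^p(\partial\Omega)$ and $W^{1,p}(\partial\Omega)$ and interpolates (this is where your computation is valid), controls $\frac{\nabla u^0}{r^\omega}$ in $W^{1-\frac{1}{p},p}(\partial\Omega)$ as the trace of a $W^{1,p}(\Omega)$ function --- a purely interior condition on $\nabla^2u^0$ --- and then combines the two factors with the tame product estimate \eqref{tameest}. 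Your argument is fine in case (2), where $u^0\in C^2(\overline{\Omega})$ makes $\nabla^2u^0$ continuous up to the boundary, but case (1) needs the paper's factorization (or an equivalent device) to be rescued.
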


\begin{proof}[Proof]
First of all, using proposition \ref{propboundarydatahomoH12} one notices that $\varphi_{bl}^\varepsilon$ belongs to $H^\frac{1}{2}(\partial\Omega)$. 
As $\varphi_{bl}^\varepsilon$ factors into $V\bigl(\frac{\cdot}{\varepsilon}\bigr)\nabla u^0$ we immediatly get the very rough estimate
\begin{equation*}
\begin{Vmatrix}
\varphi^\varepsilon_{bl}
\end{Vmatrix}_{H^\frac{1}{2}(\partial\Omega)}=O\bigl(\varepsilon^{-\frac{1}{2}}\bigr)
\end{equation*}
which is far from being enough. We do not try further to get a bound in $H^\frac{1}{2}(\partial\Omega)$.

We refer to \cite{dgvnm} for the case when $u^0\in H^3(\Omega)\cap C^2(\overline{\Omega})$. If $u^0\in H^{2+\omega}(\Omega)$ the proof follows the same scheme, with differences due to the weaker regularity assumption on $u^0$. \emph{Assume for the rest of the proof that $u^0\in H^{2+\omega}(\Omega)$.} The edge estimate goes on as in the case $u^0\in H^3(\Omega)\cap C^2(\overline{\Omega})$ and one gets for all $1\leq p<2$, $m\in\mathbb N$
\begin{equation*}
\begin{Vmatrix}
\psi\varphi^\varepsilon
\end{Vmatrix}_{W^{1-\frac{1}{p},p}(\partial\Omega)}=O\bigl(\varepsilon^m\bigr)
\end{equation*}
where $\psi$ is a smooth function on $\partial\Omega$ compactly supported in $\partial\Omega\cap K^k$ outside the vertices.

Let us now focus on the estimate near a vertex $O$ lying at the intersection of $K^1$ and $K^2$. We introduce polar coordinates $r=r(x)$ and $\theta=\theta(x)$ centered at $O$ and use a smooth function $\psi$ on $\partial\Omega$ compactly supported in a vicinity of $O$. Let $1\leq p$. The tame estimate
\begin{equation}\label{tameest}
\begin{Vmatrix}
fg
\end{Vmatrix}_{W^{1-\frac{1}{p},p}(\partial\Omega)}
\leq
C\left(
\begin{Vmatrix}
f
\end{Vmatrix}_{L^\infty(\partial\Omega)}
\begin{Vmatrix}
g
\end{Vmatrix}_{W^{1-\frac{1}{p},p}(\partial\Omega)}
+\begin{Vmatrix}
g
\end{Vmatrix}_{L^\infty(\partial\Omega)}
\begin{Vmatrix}
f
\end{Vmatrix}_{W^{1-\frac{1}{p},p}(\partial\Omega)}
\right)
\end{equation}
holds for all $f,g\in L^\infty(\partial\Omega)\cap W^{1-\frac{1}{p},p}(\partial\Omega)$.
Taking advantage of the fact that $H^{2+\omega}(\Omega)$ injects in $C^{1,\omega}(\overline{\Omega})$, one knows $\frac{\nabla u^0}{r^\omega}\in L^\infty(\partial\Omega)$. Besides, $\frac{\nabla u^0}{r^\omega}$ belongs to $W^{1,p}(\Omega)$ for all $1\leq p<2$. Therefore $\frac{\nabla u^0}{r^\omega}\in L^\infty(\partial\Omega)\cap W^{1-\frac{1}{p},p}(\partial\Omega)$ for all $1\leq p<2$ and \eqref{tameest} yields 
\begin{multline*}
\begin{Vmatrix}
\psi^2\varphi^\varepsilon_{bl}
\end{Vmatrix}_{W^{1-\frac{1}{p},p}(\partial\Omega)}
\leq
C\biggl(
\begin{Vmatrix}
\psi r^\omega V\bigl(\frac{\cdot}{\varepsilon}\bigr)
\end{Vmatrix}_{W^{1-\frac{1}{p},p}(\partial\Omega)}
\begin{Vmatrix}
\psi\frac{\nabla u^0}{r^\omega}
\end{Vmatrix}_{L^\infty(\partial\Omega)}\\
+\begin{Vmatrix}
\psi r^\omega V\bigl(\frac{\cdot}{\varepsilon}\bigr)
\end{Vmatrix}_{L^\infty(\partial\Omega)}
\begin{Vmatrix}
\psi\frac{\nabla u^0}{r^\omega}
\end{Vmatrix}_{W^{1-\frac{1}{p},p}(\partial\Omega)}
\biggr).
\end{multline*}
By estimating first on $\partial\Omega\cap K^1$ then on $\partial\Omega\cap K^2$ one obtains for all $1\leq p<2$
\begin{subequations}
\begin{align}
&\begin{Vmatrix}
\psi r^\omega V\bigl(\frac{\cdot}{\varepsilon}\bigr)
\end{Vmatrix}_{L^\infty(\partial\Omega)}=O\bigl(\varepsilon^\omega\bigr)\nonumber\\
&\begin{Vmatrix}
\psi r^\omega V\bigl(\frac{\cdot}{\varepsilon}\bigr)
\end{Vmatrix}_{L^p(\partial\Omega)}=O\bigl(\varepsilon^{\omega+\frac{1}{p}}\bigr)\label{estinterLpromegaV}\\
&\begin{Vmatrix}
\psi r^\omega V\bigl(\frac{\cdot}{\varepsilon}\bigr)
\end{Vmatrix}_{W^{1,p}(\partial\Omega)}=O\bigl(\varepsilon^{\omega-1+\frac{1}{p}}\bigr)\label{estinterW1promegaV}.
\end{align}
\end{subequations}
Interpolating \eqref{estinterLpromegaV} and \eqref{estinterW1promegaV} gives
\begin{equation*}
\begin{Vmatrix}
\psi r^\omega V\bigl(\frac{\cdot}{\varepsilon}\bigr)
\end{Vmatrix}_{W^{1-\frac{1}{p},p}(\partial\Omega)}
\leq C
\begin{Vmatrix}
\psi r^\omega V\bigl(\frac{\cdot}{\varepsilon}\bigr)
\end{Vmatrix}_{W^{1,p}(\partial\Omega)}^{1-\frac{1}{p}}
\begin{Vmatrix}
\psi r^\omega V\bigl(\frac{\cdot}{\varepsilon}\bigr)
\end{Vmatrix}_{L^p(\partial\Omega)}^\frac{1}{p}=O\bigl(\varepsilon^{\omega-1+\frac{2}{p}}\bigr).
\end{equation*}
Finally, $\begin{Vmatrix}\psi^2\varphi^\varepsilon_{bl}\end{Vmatrix}_{W^{1-\frac{1}{p},p}}=O\bigl(\varepsilon^{\omega}\bigr)$ which concludes our proof.
\end{proof}

We conclude this section by expounding how to deduce a bound on $e^\varepsilon_{bl}$ from the lemmas \ref{lemrblepsH-1} and \ref{lemphiblepsH1/2}. We focus on the case when $u^0\in H^{2+\omega}(\Omega)$, as the reasoning is a little more subtle than in the case $u^0\in H^3(\Omega)\cap C^2(\overline{\Omega})$. It is very straightforward to adapt the proof in the latter case (see also \cite{dgvnm}). Lemma \ref{lemphiblepsH1/2} gives bounds for $\varphi^\varepsilon_{bl}$ in $W^{1-\frac{1}{p},p}(\partial\Omega)$ for $1\leq p<2$. As we lack an estimate in $H^\frac{1}{2}(\partial\Omega)$, we cannot bound $e^\varepsilon_{bl}$ in $H^1(\Omega)$. We thus have to use results on elliptic equations in divergence form and with source term in some $W^{-1,p}(\Omega)$ space. Let us state a general theorem that suits to our framework (for references see below).
\begin{theo}[Meyers]\label{theomeyersW1p}
Let $\Omega\subset\mathbb R^d$ be a Lipschitz domain, $A=A^{\alpha\beta}(y)\in M_N(\mathbb R)$ a family of $C^\infty(\overline{\Omega})$ functions. Assume the ellipticity of $A$.\\
There exists a $p_0<2$ such that for all $f\in H^{-1}(\Omega)$ if $u\in H^1_0(\Omega)$ is a weak solution of $-\nabla\cdot A\nabla u=f$ in $H^{-1}(\Omega)$ and if for all $p_0< p<2$, $f\in W^{-1,p}(\Omega)$, then $u\in W^{1,p}_0(\Omega)$ and there exists $C(p)>0$,
\begin{equation*}
\begin{Vmatrix}
u
\end{Vmatrix}_{W^{1,p}_0(\Omega)}\leq C(p)
\begin{Vmatrix}
f
\end{Vmatrix}_{W^{-1,p}(\Omega)}.
\end{equation*}
\end{theo}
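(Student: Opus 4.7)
The plan is to prove Meyers' estimate by combining a Caccioppoli-type inequality, a reverse H\"older estimate obtained through the Sobolev--Poincar\'e inequality, and Gehring's self-improving lemma, followed by a duality argument to cover the range $p_0 < p < 2$.

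First, recall that for $p = 2$ existence, uniqueness and the bound $\begin{Vmatrix}u\end{Vmatrix}_{H^1_0(\Omega)} \leq \lambda^{-1}\begin{Vmatrix}f\end{Vmatrix}_{H^{-1}(\Omega)}$ follow from Lax--Milgram and assumption \textbf{(A1)}. The main task is to promote this to some exponent $p = 2 + \delta > 2$; a duality argument will then supply $p_0 < 2$.

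To establish higher integrability of $\nabla u$, I would derive a Caccioppoli-type estimate on every ball $B_{2r} \subset \Omega$ (interior case) and on every half-ball $B_{2r}(x_0) \cap \Omega$ with $x_0 \in \partial\Omega$ (boundary case). Testing the equation against $\eta^2(u-c)$ with $\eta$ a suitable cutoff yields
\begin{equation*}
\int_{B_r \cap \Omega} |\nabla u|^2 \leq \frac{C}{r^2}\int_{(B_{2r}\setminus B_r) \cap \Omega}|u-c|^2 + \text{(source terms)},
\end{equation*}
where $c$ is the mean of $u$ on $B_{2r}$ in the interior case and $c=0$ near the boundary (since $u \in H^1_0$). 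Applying the Sobolev--Poincar\'e inequality bounds $\int|u-c|^2$ by $\bigl(\int|\nabla u|^q\bigr)^{2/q}$ with $q = 2d/(d+2) < 2$, which yields a reverse H\"older inequality
\begin{equation*}
\biggl(\frac{1}{|B_r|}\int_{B_r} |\nabla u|^2\biggr)^{1/2} \leq C\biggl(\frac{1}{|B_{2r}|}\int_{B_{2r}} |\nabla u|^q\biggr)^{1/q} + \text{(source terms)}.
\end{equation*}
Gehring's lemma then furnishes some $\delta > 0$ (depending only on $\lambda$, $d$ and the Lipschitz character of $\Omega$) such that $\nabla u \in L^{2+\delta}_{\mathrm{loc}}$, with a quantitative global bound after a standard covering argument.

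The range $p_0 < p < 2$ follows by duality: the adjoint operator $-\nabla \cdot A^T \nabla$ satisfies the same $W^{1,2+\delta}$ estimate, so transposition gives the $W^{1,p_0}$ bound for $p_0 = (2+\delta)/(1+\delta) < 2$, and interpolation with the $p=2$ estimate fills in the intermediate range. I expect the main technical obstacle to be the boundary version of the Caccioppoli/Gehring argument, which requires flattening $\partial\Omega$ via Lipschitz charts and reflecting across the boundary so that Sobolev--Poincar\'e can be applied uniformly in half-balls; the essential point is that both $\delta$ and $C(p)$ must depend only on $\lambda$, $d$ and the Lipschitz character of $\Omega$, not on the smoothness of $A$.
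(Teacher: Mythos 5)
Your argument is correct, but it is genuinely different from what the paper does: the paper gives no proof of theorem \ref{theomeyersW1p} at all, deducing it instead as a special case of results in the literature (Meyers for scalar equations on $C^2$ domains, Gallouet--Monier for Lipschitz domains, and theorems $1$ and $2$ of Maz'ya--Shaposhnikova for systems on Lipschitz domains). The proofs behind those citations follow Meyers' original perturbation scheme: freeze the coefficients, use the $L^p$-boundedness of $g\mapsto\nabla(-\Delta)^{-1}\nabla\cdot g$ with Dirichlet conditions together with Riesz--Thorin interpolation to make its operator norm tend to $1$ as $p\to 2$, and invert $-\nabla\cdot A\nabla\colon W^{1,p}_0(\Omega)\to W^{-1,p}(\Omega)$ by a Neumann series for $|p-2|$ small. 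Your Caccioppoli / reverse H\"older / Gehring / duality route is the other classical proof, and it buys exactly what the paper needs: under the Legendre condition {\bf (A1)} the Caccioppoli inequality holds for systems, and the resulting $\delta$ and $C(p)$ depend only on $\lambda$, $d$, $N$, $p$ and the Lipschitz character of $\Omega$ --- in particular not on the modulus of continuity of $A$, hence not on $\varepsilon$, which is precisely the point stressed after the theorem. The perturbation route gives the same uniformity but requires the $L^p$ theory of the reference constant-coefficient operator on a Lipschitz domain as an input, whereas your route is self-contained. Two small adjustments would streamline your boundary step and your duality step. At the boundary, rather than reflecting across $\partial\Omega$ (a Lipschitz reflection is only bi-Lipschitz and distorts the equation), extend $u$ by zero outside $\Omega$ and apply the Sobolev--Poincar\'e inequality for functions vanishing on a subset of $B_{2r}$ of positive measure density; the density constant is exactly where the Lipschitz character of $\Omega$ enters. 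For the range $p_0<p<2$, note that the theorem is purely an a priori estimate ($u\in H^1_0(\Omega)\subset W^{1,p}_0(\Omega)$ already, $\Omega$ being bounded), so you only ever need the Gehring conclusion for the adjoint problem $-\nabla\cdot A^T\nabla v=-\nabla\cdot g$ with $g\in L^{p'}(\Omega)$, $2<p'<2+\delta$; the data are then already in divergence form, the ``source terms'' in your reverse H\"older inequality are just averages of $|g|^2$, and the identity $\int_\Omega g\cdot\nabla u=\langle f,v\rangle\leq\|f\|_{W^{-1,p}(\Omega)}\|v\|_{W^{1,p'}_0(\Omega)}$ closes the estimate without any separate interpolation.
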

Such an estimate originally appeared in the work of Meyers \cite{norm}, where the case of smooth $C^2$ domains $\Omega$ and scalar equations is treated. It has been extended by Gallouet and Monier in \cite{gallouet} to domains $\Omega$ with Lipschitz boundary. 
In their recent survey article \cite{mazya}, Maz'ya and Shaposhnikova give very general estimates working for Lipschitz domains $\Omega$ and systems of elliptic equations. Our theorem \ref{theomeyersW1p} happens to be a very special case of theorems $1$ and $2$ in \cite{mazya}. To make the link obvious take $m=1$, $l=N$, $a=0$; then $s=1-\frac{1}{p}$, $W^{m,a}_p(\Omega)=W^{1,p}(\Omega)$ and $V^a_p(\Omega)=W^{1,p}_0(\Omega)$. 

It is important to notice that $p_0$ (resp. $C(p)$) only depends on the coercivity constant of $A$ (resp. on the coercivity constant of $A$ and $p$). This makes the theorem applicable to our homogenization problem. We know from the proof of lemma \ref{lemphiblepsH1/2} that 
\begin{equation*}
\varphi_{bl}^\varepsilon\in \bigcap_{1\leq p\leq 2}W^{1-\frac{1}{p},p}(\partial\Omega).
\end{equation*}
Thus there exists a lifting $\phi^\varepsilon_{bl}$ of $\varphi_{bl}^\varepsilon$ belonging to $W^{1,p}(\Omega)$ for all $1\leq p\leq 2$ such that 
\begin{equation*}
\begin{Vmatrix}
\phi^\varepsilon_{bl}
\end{Vmatrix}_{W^{1,p}(\Omega)}\leq C(p)
\begin{Vmatrix}
\varphi^\varepsilon_{bl}
\end{Vmatrix}_{W^{1-\frac{1}{p},p}(\partial\Omega)}
\end{equation*}
with $C(p)$ independent of $\varepsilon$, as usual.
The difference $e^\varepsilon_{bl}-\phi_{bl}^\varepsilon\in H^1_0(\Omega)$ solves 
\begin{equation*}
-\nabla\cdot A\Bigl(\frac{x}{\varepsilon}\Bigr)\nabla\left(e^\varepsilon_{bl}-\phi_{bl}^\varepsilon\right)=r^\varepsilon_{bl}+\nabla\cdot A\Bigl(\frac{x}{\varepsilon}\Bigr)\nabla\phi_{bl}^\varepsilon=:F^\varepsilon_{bl}.
\end{equation*} 
As $F^\varepsilon_{bl}$ belongs to $W^{-1,p}(\Omega)$ for $1\leq p\leq 2$, we have 
\begin{equation}\label{estW-1pFepsbl}
\begin{Vmatrix}
F^\varepsilon_{bl}
\end{Vmatrix}_{W^{-1,p}(\Omega)}\leq C(p)\left[
\begin{Vmatrix}
r^\varepsilon_{bl}
\end{Vmatrix}_{H^{-1}(\Omega)}+
\begin{Vmatrix}
\varphi^\varepsilon_{bl}
\end{Vmatrix}_{W^{1-\frac{1}{p},p}(\partial\Omega)}\right].
\end{equation}
Let $p_0<2$ given by theorem \ref{theomeyersW1p}. Then, for all $p_0<p\leq 2$, 
\begin{equation*}
\begin{Vmatrix}
e^\varepsilon_{bl}-\phi_{bl}^\varepsilon
\end{Vmatrix}_{W^{1,p}_0(\Omega)}\leq C(p)
\begin{Vmatrix}
F^\varepsilon_{bl}
\end{Vmatrix}_{W^{-1,p}(\Omega)}.
\end{equation*}
Let $0<\gamma<\frac{\omega}{2}$. Then, it follows from \eqref{estW-1pFepsbl} and from lemmas \ref{lemrblepsH-1} and \ref{lemphiblepsH1/2} that for all $p_0<p<2$,
\begin{equation*}
\begin{Vmatrix}
F^\varepsilon_{bl}
\end{Vmatrix}_{W^{-1,p}(\Omega)}=O\bigl(\varepsilon^\gamma\bigr).
\end{equation*}
To get an $L^2(\Omega)$ estimate on $e^\varepsilon_{bl}$ use the Sobolev injection of $W^{1,p}(\Omega)$ in $L^2(\Omega)$ and, once again, our $W^{1-\frac{1}{p},p}(\partial\Omega)$ bound on $\varphi^\varepsilon_{bl}$. 

\selectlanguage{english}

\section{A first-order asymptotic expansion of the eigenvalues}
\label{secasy}

This section is concerned with the final step of the proof of theorems \ref{theoasylisse} and \ref{theoasypol}.  
Let $E_{\lambda^0}$ be the finite-dimensional eigenspace associated to $\lambda^0$. From the ideas explained in the introduction, and in particular the third part of theorem \ref{theoregdaugekmr}, we know, in any case, that $E_{\lambda^0}\subset H^{2+\omega}(\Omega)$, with $0<\omega$. When $\Omega$ is a smooth uniformly convex domain, we take $\omega=1$.

We have recourse to the ideas involved in \cite{moscovog} to prove the asymptotic expansion of the eigenvalues. Moskow and Vogelius use abstract estimates due to Osborn in \cite{osborn}. We recall the estimate we need in terms of $T^\varepsilon$ and $T^0$. Assume that $\lambda^0$ is an eigenvalue of order $m$. Then, $E_{\lambda^0}$ is $m$-dimensional. Let $\lambda^0=\lambda^{0,k}=\lambda^{0,k+1}=\ldots=\lambda^{0,k+m-1}$. The associated eigenvectors $v^{0,k},\ldots,v^{0,k+m-1}$ form an orthogonal basis of $E_{\lambda^0}$.
\begin{theo}[Osborn in \cite{osborn}]
There exists a constant $C>0$ such that
\begin{multline}\label{estosborn}
\begin{vmatrix}
\frac{1}{\lambda^0}-\frac{1}{m}\sum_{j=0}^{m-1}\frac{1}{\lambda^{\varepsilon,k+j}}-\frac{1}{m}\sum_{j=0}^{m-1}\left\langle(T^{\varepsilon}-T^0)v^{0,k+j},v^{0,k+j}\right\rangle
\end{vmatrix}
\leq C\begin{Vmatrix}
(T^{\varepsilon}-T^0)|_{E_{\lambda^0}}
\end{Vmatrix}^2,
\end{multline}
where $T^{\varepsilon}$ and $T^0$ are seen as operators acting in $L^2(\Omega)$ and $\langle\cdot,\cdot\rangle$ denotes the scalar product in $L^2(\Omega)$.
\end{theo}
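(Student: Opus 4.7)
The plan is to adapt the classical Riesz spectral projector perturbation theory for compact self-adjoint operators that underlies Osborn's paper. Set $\mu^0 := 1/\lambda^0$ and $\mu^{\varepsilon,j} := 1/\lambda^{\varepsilon,k+j}$ for $0\le j\le m-1$, and choose a small oriented circle $\Gamma\subset\mathbb C$ centered at $\mu^0$ whose interior meets $\spectre T^0$ only at $\mu^0$. Since $T^\varepsilon\to T^0$ in $\mathcal L(L^2(\Omega))$ norm, for $\varepsilon$ small enough $\Gamma$ lies in the resolvent set of $T^\varepsilon$ and, by semi-continuity of the spectrum of compact operators, encircles exactly the $m$ eigenvalues $\mu^{\varepsilon,0},\ldots,\mu^{\varepsilon,m-1}$ (counted with multiplicity). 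I would introduce the Riesz projectors
\[
P^\varepsilon := -\frac{1}{2\pi i}\oint_\Gamma (T^\varepsilon-z)^{-1}\,dz,\qquad P^0 := -\frac{1}{2\pi i}\oint_\Gamma (T^0-z)^{-1}\,dz,
\]
which are orthogonal by self-adjointness, with $P^0$ projecting onto $E_{\lambda^0}$ and $\dim\mathrm{Ran}\,P^\varepsilon = m$. The trace formulas $\mathrm{Tr}(T^0 P^0)=m\mu^0$ and $\mathrm{Tr}(T^\varepsilon P^\varepsilon)=\sum_{j=0}^{m-1}\mu^{\varepsilon,j}$ then follow at once from the spectral decompositions on the finite-dimensional ranges of $P^0$ and $P^\varepsilon$.

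The second step is a perturbative expansion of $P^\varepsilon$ around $P^0$, based on iterating the second resolvent identity
\[
(T^\varepsilon-z)^{-1} = (T^0-z)^{-1} + (T^0-z)^{-1}(T^0-T^\varepsilon)(T^0-z)^{-1} + \mathcal R^\varepsilon(z),
\]
where $\mathcal R^\varepsilon(z)$ is quadratic in $T^\varepsilon-T^0$ with one inner factor $(T^\varepsilon-z)^{-1}$. Integrating along $\Gamma$, computing $\mathrm{Tr}(T^\varepsilon\cdot)$, and using cyclicity of the trace together with the spectral representation $P^0 v = \sum_{j=0}^{m-1}\left\langle v,v^{0,k+j}\right\rangle v^{0,k+j}$, I expect to obtain
\[
\sum_{j=0}^{m-1}\mu^{\varepsilon,j} - m\mu^0 = \sum_{j=0}^{m-1}\left\langle(T^\varepsilon-T^0)v^{0,k+j},v^{0,k+j}\right\rangle + \mathcal E^\varepsilon,
\]
with $\mathcal E^\varepsilon$ quadratic in $T^\varepsilon-T^0$. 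Dividing by $m$ and rearranging produces the identity underlying \eqref{estosborn} up to the remainder $\mathcal E^\varepsilon/m$.

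The hard part, and what makes Osborn's estimate sharper than a naive $O(\|T^\varepsilon-T^0\|^2)$ bound, is controlling $\mathcal E^\varepsilon$ by $\|(T^\varepsilon-T^0)|_{E_{\lambda^0}}\|^2$ instead of the full operator norm. The decomposition $(T^0-z)^{-1} = (\mu^0-z)^{-1}P^0 + S^0(z)$, where $S^0(z):=(T^0-z)^{-1}(I-P^0)$ is analytic inside $\Gamma$, plays a decisive role: substituting this into every resolvent factor appearing in $\mathcal E^\varepsilon$ and discarding the terms whose integrand is analytic inside $\Gamma$ (which vanish by Cauchy's theorem), one finds that each surviving contribution flanks each factor of $T^\varepsilon-T^0$ with a projector $P^0$, so only the restriction of $T^\varepsilon-T^0$ to $E_{\lambda^0}$ enters the bound. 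Since $\mathrm{dist}(\Gamma,\spectre T^0)>0$ and $\|T^\varepsilon-T^0\|\to 0$, the resolvent $(T^\varepsilon-z)^{-1}$ is uniformly bounded on $\Gamma$ by a Neumann series estimate. Combining these ingredients with the fixed length of $\Gamma$ yields the desired quadratic bound and completes the proof of \eqref{estosborn}.
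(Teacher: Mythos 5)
Your first two steps (Riesz projectors, trace identities, first-order term $\frac{1}{m}\sum_j\langle(T^\varepsilon-T^0)v^{0,k+j},v^{0,k+j}\rangle$ extracted from the second resolvent identity) are sound and are indeed the classical skeleton behind Osborn's result. The genuine gap is in your third paragraph, i.e.\ precisely at the point you yourself identify as ``the hard part''. Your remainder $\mathcal R^\varepsilon(z)$ keeps one inner factor $(T^\varepsilon-z)^{-1}$, and that factor has its poles \emph{inside} $\Gamma$ (at the $m$ perturbed eigenvalues). Consequently the claim that, after inserting $(T^0-z)^{-1}=(\mu^0-z)^{-1}P^0+S^0(z)$, ``the terms whose integrand is analytic inside $\Gamma$ vanish'' and ``each surviving contribution flanks each factor of $T^\varepsilon-T^0$ with $P^0$'' is false as stated: a term such as
\begin{equation*}
\oint_\Gamma z\,\mathrm{Tr}\Bigl[(T^\varepsilon-T^0)\,(T^\varepsilon-z)^{-1}\,(T^\varepsilon-T^0)\,S^0(z)^2\Bigr]dz
\end{equation*}
is not analytic inside $\Gamma$, does not vanish, and has neither difference factor adjacent to $P^0$; bounding it naively gives only $\|T^\varepsilon-T^0\|^2$. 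This is not a cosmetic issue for the paper: in the application $\|T^\varepsilon-T^0\|_{\mathcal L(L^2)}=O(\varepsilon^{1/2})$ while $\|(T^\varepsilon-T^0)|_{E_{\lambda^0}}\|=O(\varepsilon)$, so a full-norm-squared remainder $O(\varepsilon)$ would be of the same size as the first-order term and the whole expansion would collapse.

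The argument can be repaired, but it needs more than you wrote. One route is to expand the inner $(T^\varepsilon-z)^{-1}$ in a full Neumann series around $(T^0-z)^{-1}$ (uniformly on $\Gamma$ for small $\varepsilon$), split every unperturbed resolvent as $(\mu^0-z)^{-1}P^0+S^0(z)$, discard the now genuinely analytic terms containing no $P^0$, and in each surviving term use cyclicity of the trace together with $P^0=(P^0)^2$ to place one $P^0$ at both ends: the two \emph{extreme} difference factors are then controlled by $\|(T^\varepsilon-T^0)P^0\|=\|P^0(T^\varepsilon-T^0)\|=\|(T^\varepsilon-T^0)|_{E_{\lambda^0}}\|$ (self-adjointness), while the inner ones are only bounded by the full norm, which is bounded, and the series is geometrically convergent. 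The alternative, which is Osborn's actual argument and the one the paper relies on, avoids contour traces of the perturbed resolvent altogether: one transports $P^\varepsilon T^\varepsilon P^\varepsilon$ back to the fixed space $E_{\lambda^0}$ via the invertible map $P^\varepsilon|_{E_{\lambda^0}}$ and uses the lemma $\|(P^\varepsilon-P^0)|_{E_{\lambda^0}}\|\leq C\|(T^\varepsilon-T^0)|_{E_{\lambda^0}}\|$, which follows from the contour formula because $(T^0-z)^{-1}P^0=(\mu^0-z)^{-1}P^0$. Note also that the paper itself does not reprove the statement: it quotes it as a direct corollary of theorem $3.1$ of Moskow and Vogelius (based on Osborn), using only self-adjointness and compactness of $T^\varepsilon$, $T^0$; so you are attempting more than the paper does, but as written the key quadratic estimate is not established.
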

This theorem is a straightforward corollary of theorem $3.1$ in \cite{moscovog}. Its proof really uses all properties of the operators $T^\varepsilon$ and $T^0$, among other things selfadjointness and compactness.


The first thing to do is to estimate $\begin{Vmatrix}
(T^{\varepsilon}-T^0)|_{E_{\lambda^0}}
\end{Vmatrix}$. Let $f\in E_{\lambda^0}$; we call $u^\varepsilon:=T^{\varepsilon}f$ and $u^0:=T^0f$. We need to estimate $\begin{Vmatrix}u^\varepsilon-u^0\end{Vmatrix}_{L^2(\Omega)}$. We can improve the bounds of section \ref{secsomerrorest}, such as \eqref{inequepsu0L2}. Those bounds are not enough to deduce from \eqref{estosborn} a first-order asymptotic expansion of $\lambda^\varepsilon$. The loss of $O\left(\frac{1}{\sqrt{\varepsilon}}\right)$ in estimate \eqref{inequepsu0L2} is due to the bad bound of $\vartheta_{u,bl}^\varepsilon$ in $H^{\frac{1}{2}}(\partial\Omega)$:
\begin{equation*}
\begin{Vmatrix}
\vartheta_{u,bl}^{\varepsilon}
\end{Vmatrix}_{L^2(\Omega)}
\leq
\begin{Vmatrix}
\vartheta_{u,bl}^{\varepsilon}
\end{Vmatrix}_{H^1(\Omega)}
\leq C
\begin{Vmatrix}
\chi^\alpha\bigl(\frac{x}{\varepsilon}\bigr)\partial_{x_\alpha}u^0(x)
\end{Vmatrix}_{H^\frac{1}{2}(\partial\Omega)}
\leq C\varepsilon^{-\frac{1}{2}}
\begin{Vmatrix}
u^0
\end{Vmatrix}_{H^2(\Omega)}.
\end{equation*}

If $\Omega$ is a smooth domain, \eqref{estuepsu0epsL2} can be shown thanks to the results of Avellaneda and Lin. Theorem \ref{theoavlinsys} yields indeed
\begin{equation*}
\begin{Vmatrix}
\vartheta_{u,bl}^{\varepsilon}
\end{Vmatrix}_{L^2(\Omega)}\leq C\begin{Vmatrix}
\varphi\bigl(\cdot,\frac{\cdot}{\varepsilon}\bigr)\end{Vmatrix}_{L^2(\partial\Omega)}\leq C\begin{Vmatrix}
u^0
\end{Vmatrix}_{H^2(\Omega)}.
\end{equation*}
The assumption $u^0\in H^2(\Omega)$ being clearly fulfilled as $u^0=T^0f=\frac{1}{\lambda^0}f$, it is easy to adapt the proof of corollary \ref{coruepsu0L2} to conclude that:
\begin{equation}\label{estuepsu0epsL2sys}
\begin{Vmatrix}
u^\varepsilon-u^0
\end{Vmatrix}_{L^2(\Omega)}
\leq C\varepsilon\begin{Vmatrix}
u^0
\end{Vmatrix}_{H^2(\Omega)}
\leq C\varepsilon\begin{Vmatrix}
u^0
\end{Vmatrix}_{H^{2+\omega}(\Omega)}.
\end{equation}

Assume now that $\Omega$ is a polygonal domain satisfying either {\bf (RAT)} or {\bf (DIV)}. A uniform bound in $\varepsilon$ of $\vartheta_{u,bl}^{\varepsilon}$ does not follow from the results of Avellaneda and Lin. The estimates of section \ref{secsomerrorest} are sufficient to get the convergence of the boundary layer in section \ref{sechomblsys}, up to the extraction of a subsequence under assumption {\bf (RAT)}. Actually theorem \ref{CVhomorat} (resp. \ref{CVhomodiv}) implies that: there exists a sequence $(\varepsilon_n)$ such that $\begin{Vmatrix}
\vartheta_{u,bl}^{\varepsilon_n}
\end{Vmatrix}_{L^2(\Omega)}\leq C\begin{Vmatrix}
u^0
\end{Vmatrix}_{H^{2+\omega}(\Omega)}$
(resp. for all $0<\varepsilon$, $\begin{Vmatrix}
\vartheta_{u,bl}^{\varepsilon}
\end{Vmatrix}_{L^2(\Omega)}\leq C\begin{Vmatrix}
u^0
\end{Vmatrix}_{H^{2+\omega}(\Omega)}$).
We conclude, as in the case when $\Omega$ is smooth, that \eqref{estuepsu0epsL2sys} holds. In order to avoid extracting subsequences we now omit the case of polygonal domains under assumption {\bf (RAT)}. 

It remains to bound $\begin{Vmatrix}
u^0
\end{Vmatrix}_{H^{2+\omega}(\Omega)}$ by $\begin{Vmatrix}f\end{Vmatrix}_{L^2(\Omega)}$. Taking advantage of the equivalence of norms on the finite-dimensional space $E_{\lambda^0}\subset H^{2+\omega}(\Omega)\subset H^2(\Omega)$, there exists $0<C$ such that for all $w\in E_{\lambda^0}$,
\begin{equation}\label{esteqnorm}
\begin{Vmatrix}
w
\end{Vmatrix}_{H^{2+\omega}(\Omega)}
\leq C
\begin{Vmatrix}
w
\end{Vmatrix}_{L^2(\Omega)}.
\end{equation}
Therefore, combining \eqref{estuepsu0epsL2sys} with \eqref{esteqnorm}, we get
\begin{equation*}
\begin{Vmatrix}
T^{\varepsilon}f-T^0f
\end{Vmatrix}_{L^2(\Omega)}\leq C\varepsilon
\begin{Vmatrix}
u^0
\end{Vmatrix}_{H^{2+\omega}(\Omega)}
\leq C\varepsilon
\begin{Vmatrix}f\end{Vmatrix}_{L^2(\Omega)}
\end{equation*}
which shows that 
\begin{equation*}
\begin{Vmatrix}
(T^{\varepsilon}-T^0)|_{E_{\lambda^0}}
\end{Vmatrix}\leq C\varepsilon.
\end{equation*}

Our final goal is to prove \eqref{devptasylisse}, \eqref{devptasypol} and \eqref{devptasypolreg}. The reasoning, in every case, follows the lines of \cite{moscovog}. Estimate \eqref{estosborn} now sums up in:
\begin{equation}\label{dvptasyinter}
\frac{1}{\lambda^0}-\frac{1}{m}\sum_{j=0}^{m-1}\frac{1}{\lambda^{\varepsilon,k+j}}=\frac{1}{m}\sum_{j=0}^{m-1}\left\langle(T^{\varepsilon}-T^0)v^{0,k+j},v^{0,k+j}\right\rangle
+O\bigl(\varepsilon^2\bigr).
\end{equation}
Let us focus on $\frac{1}{m}\sum_{j=0}^{m-1}\left\langle(T^{\varepsilon}-T^0)v^{0,k+j},v^{0,k+j}\right\rangle$ and work on $\left\langle(T^{\varepsilon}-T^0)v^{0,k+j},v^{0,k+j}\right\rangle$, $j$ being fixed in $\{0,\ldots,m-1\}$. We call $u^{\varepsilon,k+j}:=T^{\varepsilon}v^{0,k+j}$. This function solves \eqref{sysosc}. According to estimate \eqref{enestL2ineqomega} of theorem \ref{theoL2ineqomega}, as $v^{0,k+j}\in H^{2+\omega}(\Omega)$, 
\begin{equation*}
\begin{Vmatrix}
u^{\varepsilon,k+j}(x)-\frac{1}{\lambda^0}v^{0,k+j}(x)-\frac{\varepsilon}{\lambda^0}\chi^\alpha\left(\frac{x}{\varepsilon}\right)\partial_{x_\alpha}v^{0,k+j}(x)+\frac{\varepsilon}{\lambda^0}\vartheta^{\varepsilon}_{v,k+j,bl}(x)
\end{Vmatrix}_{L^2(\Omega)}=O\bigl(\varepsilon^{1+\frac{\omega}{2}}\bigr),
\end{equation*}
where $\vartheta^{\varepsilon}_{v,k+j,bl}$ solves \eqref{sysvartheta} with $v^{0,k+j}$ instead of $u^0$. Cauchy-Schwarz inequality implies
\begin{multline}\label{dvptasyordre1}
\left\langle(T^{\varepsilon}-T^0)v^{0,k+j},v^{0,k+j}\right\rangle=\int_{\Omega}\left(\frac{1}{\lambda^0}v^{0,k+j}(x)-u^{\varepsilon,k+j}(x)\right)v^{0,k+j}(x)dx\\
=\frac{\varepsilon}{\lambda^0}\int_{\Omega}\chi^\alpha\left(\frac{x}{\varepsilon}\right)\partial_{x_\alpha}v^{0,k+j}(x)\cdot v^{0,k+j}(x)dx+\frac{\varepsilon}{\lambda^0}\int_{\Omega}\vartheta^{\varepsilon}_{v,k+j,bl}(x)\cdot v^{0,k+j}(x)dx+O\bigl(\varepsilon^{1+\frac{\omega}{2}}\bigr).
\end{multline}
We intend to show that the term involving $\chi^\alpha$ in \eqref{dvptasyordre1} is of order $O\bigl(\varepsilon^{1+\frac{\omega}{2}}\bigr)$. In order to carry out integrations by parts, we introduce, for each $1\leq\alpha\leq 2$, a periodic $C^\infty$ solution $b^\alpha=b^\alpha(y)\in M_N(\mathbb R)$ to
\begin{equation*}
\Delta_yb^{\alpha}=\chi^\alpha,
\end{equation*}
the Fredholm property being satisfied as $\int_{\mathbb T^2}\chi^\alpha(y)dy=0$. An integration by part gives
\begin{align*}
&\frac{\varepsilon}{\lambda^0}\int_{\Omega}\chi^\alpha\left(\frac{x}{\varepsilon}\right)\partial_{x_\alpha}v^{0,k+j}(x)\cdot v^{0,k+j}(x)dx\\
&\qquad\qquad=\frac{\varepsilon}{\lambda^0}\int_{\Omega}\varepsilon^2\Delta\left(b^\alpha\left(\frac{x}{\varepsilon}\right)\right)\partial_{x_\alpha}v^{0,k+j}(x)\cdot v^{0,k+j}(x)dx\\
&\qquad\qquad=-\frac{\varepsilon^2}{\lambda^0}\int_{\Omega}\varepsilon\nabla\left(b^\alpha\left(\frac{x}{\varepsilon}\right)\right)\cdot\nabla\left(\partial_{x_\alpha}v^{0,k+j}(x)v^{0,k+j}(x)\right)dx\\
&\qquad\qquad\leq C\varepsilon^2\begin{Vmatrix}
\varepsilon\nabla\left(b^\alpha\left(\frac{x}{\varepsilon}\right)\right)
\end{Vmatrix}_{L^\infty(\Omega)}
\begin{Vmatrix}
\nabla\left(\partial_{x_\alpha}v^{0,k+j}(x)v^{0,k+j}(x)\right)
\end{Vmatrix}_{L^1(\Omega)}\\
&\qquad\qquad\leq C\varepsilon^2.
\end{align*}
We deduce from \eqref{dvptasyinter} and \eqref{dvptasyordre1} that
\begin{align*}
\frac{1}{\lambda^0}-\frac{1}{m}\sum_{j=0}^{m-1}\frac{1}{\lambda^{\varepsilon,k+j}}
&=\frac{1}{m}\sum_{j=0}^{m-1}\left\langle(T^{\varepsilon}-T^0)v^{0,k+j},v^{0,k+j}\right\rangle
+O\bigl(\varepsilon^2\bigr)\\
&=\frac{1}{m}\sum_{j=0}^{m-1}\frac{\varepsilon}{\lambda^0}\int_{\Omega}\vartheta^{\varepsilon}_{v,k+j,bl}(x)\cdot v^{0,k+j}(x)dx+O\bigl(\varepsilon^{1+\frac{\omega}{2}}\bigr).
\end{align*}

The results of section \ref{sechomblsys} now apply, in particular theorems \ref{theodgvnm2}, \ref{CVhomorat} (in this case up to the extraction of a subsequence) and \ref{CVhomodiv}, and yield that
\begin{equation*}
\begin{Vmatrix}
\vartheta_{v,k+j,bl}^{\varepsilon}-\vartheta_{v,k+j,bl}^*
\end{Vmatrix}_{L^2(\Omega)}=O\bigl(\varepsilon^{\gamma}\bigr)
\end{equation*}
for suitable exponents $0<\gamma$: 
\begin{enumerate}
\item for all $0\leq\gamma<\frac{1}{11}$, when $\Omega$ is a smooth uniformly convex domain;
\item for all $0\leq\gamma<\frac{\omega}{2}$ (resp. for $\gamma=\frac{1}{2}$), when $\Omega$ is a convex polygon satisfying either {\bf (RAT)} or {\bf (DIV)} and  $E_{\lambda^0}\subset H^{2+\omega}(\Omega)$ (resp. $E_{\lambda^0}\subset H^{3}(\Omega)\cap C^2(\overline{\Omega})$).
\end{enumerate}
Therefore,
\begin{equation*}
\frac{1}{\lambda^0}-\frac{1}{m}\sum_{j=0}^{m-1}\frac{1}{\lambda^{\varepsilon,k+j}}
=\varepsilon\frac{1}{m\lambda^0}\sum_{j=0}^{m-1}\int_{\Omega}\vartheta_{v,k+j,bl}^*(x)\cdot v^{0,k+j}(x)dx+O\bigl(\varepsilon^{1+\gamma}\bigr),
\end{equation*}
with $\gamma$ given above; 
from the convergence of the eigenvalues $\lambda^{\varepsilon,k+j}$ towards $\lambda^{0,k+j}$, we deduce that
\begin{equation*}
\left[\frac{1}{m}\sum_{j=0}^{m-1}\frac{1}{\lambda^{\varepsilon,k+j}}\right]^{-1}=\lambda^0
+\varepsilon\frac{\lambda^0}{m}\sum_{j=0}^{m-1}\int_{\Omega}\vartheta_{v,k+j,bl}^*(x)\cdot v^{0,k+j}(x)dx+O\bigl(\varepsilon^{1+\gamma}\bigr),
\end{equation*}
which achieves the proof of theorems \ref{theoasylisse} and \ref{theoasypol}. 

\section*{Acknowledgement}
The research of this paper was supported by the Agence Nationale de la Recherche under the grant ANR-$08$-JCC-$0104$ coordinated by David G\'erard-Varet. The author would like to thank his PHD advisor David G\'erard-Varet for bringing this subject to him.

\nocite{*}

\bibliographystyle{plain} 
\bibliography{homovp5_bib} 




\end{document}